\def\R{{\mathbb R}}
\def\T{{\mathbb T}}
\def\Q{{\mathbb Q}}
\def\Z{{\mathbb Z}}
\def\N{{\mathbb N}}
\def\cont{\mathfrak{c}}
\def\Ni{\mathsf{N}}
\def\HMi{\mathsf{HM}}
\def\vNk{von Neumann kernel}
\def\pvN{potential von Neumann kernel}
\def\Prm{{\mathbb P}}
\def\map{{minimally almost periodic}}
\def\Zar{\mathfrak Z}
\def\axiom#1{\hbox{$\nabla_\kappa$}}
\def\Axiom#1{\hbox{$\nabla_\kappa'$}}
\newtheorem{thm}{Theorem}[section]
\newtheorem{theorem}[thm]{Theorem}
\newtheorem{corollary}[thm]{Corollary}
\newtheorem{proposition}[thm]{Proposition}
\newtheorem{lemma}[thm]{Lemma}
\theoremstyle{definition}
\newtheorem{definition}[thm]{Definition}
\newtheorem{example}[thm]{Example}
\newtheorem{remark}[thm]{Remark}
\newtheorem{problem}[thm]{Problem}
\def\HM{Hartman-Mycielski}
\title[]
{Final solution of Protasov-Comfort's problem on minimally almost periodic group topologies}
\author[]
{Dikran Dikranjan}
\address[D. Dikranjan]{\hfill\break
Dipartimento di Matematica e Informatica
\hfill\break
Universit\`{a} di Udine
\hfill\break
Via delle Scienze  206, 33100 Udine
\hfill\break
Italy}
\email{dikranja@dimi.uniud.it} 
\author[]
{Dmitri Shakhmatov}
\address[D. Shakhmatov]{\hfill\break
Department of Mathematical Sciences, Faculty of Science\hfill\break
Ehime University, Matsuyama 790--8577\hfill\break Japan}
\email{dmitri.shakhmatov@ehime-u.ac.jp}
\thanks{The first named author gratefully acknowledges the FY2013 Long-term visitor grant~L13710 by the Japan Society for the Promotion of Science (JSPS)}
\thanks{The second named author was partially supported by the Grant-in-Aid for
Scientific Research~(C) No.~26400091 by the Japan Society for the Promotion of Science (JSPS)}
\begin{document}
\begin{abstract}
We prove that an abelian group admits a minimally almost periodic (MinAP) group topology if and only if  it is connected in its Markov-Zariski topology. In particular, every unbounded 
abelian group admits a MinAP group topology. This answers positively a question set by Comfort \cite{Co}, as well as several weaker forms proposed recently by Gabriyelyan \cite{G3}. Using this characterization we answer also two open questions of Gould \cite{G}. We prove that 
a subgroup $H$ of an abelian group $G$ can be realized as the von Neumann kernel of $G$ equipped with some Hausdorff group topology if and only if $H$ is contained in the connected component of zero of $G$ with respect to its Markov-Zariski topology. 
This completely resolves a question of Gabriyelyan \cite{G0}, as well as some of its particular versions which were open.
\end{abstract}
\maketitle

As usual, $\N$ denotes the set of natural numbers, $\Z$ denotes the group of integers, $\Prm$ denotes the set of all prime numbers and $\cont$ denotes the cardinality of the continuum.

For a group $G$ and $m\in \N$, we let 
$$
mG=\{mg:g\in G\}\  \ \mbox{ and }\  \ G[m]=\{g\in G: mg = e\},
$$ 
where $e$ is the identity element of $G$. When $G$ is abelian, we use $0$ instead of $e$. A group $G$ has {\em finite exponent\/} if $mG=\{e\}$ for some integer $m\ge 1$.

Recall that a non-trivial abelian group $G$ of finite exponent is a direct sum of cyclic groups 
\begin{equation}
\label{eq:1}
G = \bigoplus _{p \in \pi(G)} \bigoplus_{i=1}^{m_p} \Z(p^i)^{(\alpha_{p,i})},
\end{equation}
where $\pi(G)$ is a non-empty finite set of primes and the cardinals $\alpha_{p,i}$ are known as {\em Ulm-Kaplanski invariants\/} of $G$. Note that while some of them may be equal to zero, the cardinals $\alpha_{p,m_p}$ are supposed to be positive; they are called the {\em leading Ulm-Kaplanski invariants\/} of $G$.  

\section{Protasov-Comfort problem on minimally almost periodic group topologies}

Our main results provide a solution to Protasov--Comfort problem on minimally almost periodic group topologies on the abelian groups. 
 For the sake of completeness, we recall the relevant pair of notions, due to von Neumann \cite{vN}, in the abelian case. 
 A {\em character} of a topological abelian group $G$ is a continuous homomorphism  $G \to \T$. A topological abelian group $G$ is called 

\begin{itemize}
  \item[(a)] {\em minimally almost periodic}, if every character $G \to \T$ is trivial;  
  \item[(b)] {\em maximally almost periodic}, if the characters $G \to \T$ separate the points of $G$. 
\end{itemize}

It is easy to see that every abelian group admits a maximally almost periodic group topology (e.g., the discrete one). 
Examples of minimally almost periodic abelian groups are not easy to come by. The first known examples were those borrowed from Analysis, namely vector topological spaces without non-trivial continuous functionals (more precisely, the underlying topological abelian group of such a space turns out to be minimally almost periodic \cite[Section 23.32]{HR}, example of vector spaces with this property can be found in \cite{Day}). 
Nienhuys \cite{N} built a solenoidal (so, of size at least $\cont$) and monothetic minimally almost periodic group, which yields the existence of a minimally almost periodic group topology also on $\Z$ (see Section \ref{Nienhyus:section} for further details). All these examples are connected. 

To the best  of our knowledge, the first explicit and also quite simple example of a countable \map\ group was given by Prodanov \cite{Pr}. Later,  Ajtai, Havas and  Koml\' os \cite{AHK} provided minimally almost periodic group topologies on $\Z$ and some countably infinite direct sums of simple cyclic groups, deducing that every abelian group admits a group topology that is not maximally almost periodic.
In his review to their paper Protasov \cite{P} posed the natural question of whether {\em every} infinite abelian group admits a minimally almost periodic group topology.

 In September 1989 Remus noticed the following counter-example to this question.

\begin{example}\label{Remus:example} (Remus)
{\em The group $ G = \Z(2)\times \Z(3)^\omega$ does not  admit any \map\ group topology\/}
 since the projection $p: \Z(2)\times \Z(3)^\omega\to \Z(2)$ is
a continuous character for every Hausdorff group topology on $G$.
Indeed, $\ker p = \{x\in G: 3x = 0\}$ is closed (and hence, also open) in every Hausdorff group topology on $G$.
\end{example}
Motived by this example, Comfort \cite[Question 3J.1]{Co} modified the original Protasov's question to the following 

\begin{problem}\cite[Question 521]{Co}\label{Q:C}
 Does every Abelian group which is not of bounded order admit a minimally almost periodic topological group topology? What about the countable case?
\end{problem}

Zelenyuk and Protasov \cite{ZP} introduced a new technique for building minimally almost periodic group topologies 
on countable groups, using $T$-sequences. Applying this method, Gabriyelyan \cite{G3} obtained a description of the bounded abelian groups
admitting a minimally almost periodic group topology, by essentially showing that these are precisely the bounded groups whose leading Ulm-Kaplansky invariants are infinite. 

Gabriyelyan \cite[Theorem 2]{G1} also  proved that all finitely generated infinite abelian groups admit a minimally almost periodic group topology, and he recently extended this to all countably infinite unbounded groups \cite{G3}, resolving the second part of Question \ref{Q:C} (a short and self-contained proof of this result is given in Lemma \ref{Saak}).  Nevertheless, Problem \ref{Q:C} remained open for all uncountable groups. 

\section{Necessary conditions for the existence of a minimal almost periodic group topology}

\begin{proposition}\label{Necessary:condition} If $G$ is a minimally almost periodic abelian group, then every continuous homomorphism  $f:G\to K$ of $G$ into a compact group $K$ is trivial. In particular, every proper closed subgroup $H$ of $G$ has infinite index in $G$.
\end{proposition}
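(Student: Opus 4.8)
The plan is to reduce the general statement about compact targets to the defining property of \map\ groups, i.e.\ the triviality of all characters $G\to\T$, and then to deduce the finite-index claim as an immediate corollary. The essential tool is the Peter--Weyl theorem for compact abelian groups, which guarantees that the continuous characters of such a group separate its points.

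First I would treat the main assertion. Let $f\colon G\to K$ be a continuous homomorphism into a compact group $K$. Since $G$ is abelian, the image $f(G)$ is an abelian subgroup of $K$, so its closure $A=\overline{f(G)}$ is a \emph{compact abelian} subgroup of $K$ in which $f(G)$ is dense. It suffices to prove that $A$ is trivial, since then $f(G)\subseteq A=\{0\}$. Suppose, toward a contradiction, that $A$ is non-trivial. By the Peter--Weyl theorem the characters of $A$ separate its points, so there exists a non-trivial character $\chi\colon A\to\T$. The composition $\chi\circ f\colon G\to\T$ is a continuous homomorphism, hence a character of $G$, and I claim it is non-trivial. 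Indeed, were $\chi\circ f$ trivial, then $\chi$ would vanish on the dense subset $f(G)$ of $A$, and by continuity $\chi$ would be trivial on all of $A$, contradicting the choice of $\chi$. Thus $\chi\circ f$ is a non-trivial character of $G$, contradicting the assumption that $G$ is \map. Therefore $A$ is trivial and $f$ is trivial.

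The delicate point is exactly this passage from a non-trivial character of $A$ to a non-trivial character of $G$: one must ensure that a character cannot be non-trivial on the compact closure while being trivial on the dense image, and it is here that density and continuity are used in tandem. Everything else is routine once one observes that the relevant target can always be replaced by the compact abelian group $\overline{f(G)}$.

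Finally, for the ``in particular'' clause, let $H$ be a proper closed subgroup of $G$ and suppose, for contradiction, that the index $[G:H]$ is finite. Since $H$ is closed, the quotient $G/H$ is a Hausdorff topological group, and being finite it is discrete and hence compact. The canonical projection $q\colon G\to G/H$ is then a continuous homomorphism into a compact group, so by the main assertion it must be trivial. But $q$ is surjective and $G/H$ is non-trivial because $H\neq G$, a contradiction. Hence $[G:H]$ is infinite, as claimed.
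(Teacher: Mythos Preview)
Your argument is correct. The paper states Proposition~\ref{Necessary:condition} without proof, treating it as a standard observation; the reasoning you supply is exactly the expected one. The reduction to the compact abelian closure $A=\overline{f(G)}$ followed by an appeal to the fact that characters separate points of a compact abelian group (Peter--Weyl/Pontryagin duality) is the canonical route to the first assertion, and your derivation of the ``in particular'' clause via the finite (hence compact) quotient $G/H$ is likewise the natural one. There is nothing to add or to compare; had the authors written out a proof, it would have been essentially this.
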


Markov \cite{Mar1,Mar} says that subset $X$ of a group $G$ is {\em unconditionally closed\/} in $G$ if $X$ is closed in every Hausdorff group topology on $G$.

\begin{corollary}
\label{cor:necessary:condition}
If an abelian group $G$ admits a  minimally almost periodic abelian group topology, then every  proper unconditionally closed subgroup $H$ of $G$ has infinite index in $G$.
\end{corollary}

This corollary explains why the group $G$ from Example \ref{Remus:example} does not admit a minimally almost periodic group topology. Indeed, the subgroup $G[3]$ is unconditionally closed in $G$ and has index $2$.

It is natural to ask if the converse to Corollary \ref{cor:necessary:condition} also holds.

\begin{problem}
\label{Main:question} 
If all proper unconditionally closed subgroups of a group $G$ have infinite index, does then $G$ admit a  minimally almost periodic group topology? 
\end{problem}

Due to the fundamental fact that all unconditionally closed subsets of abelian groups are algebraic \cite{DS-relection, DS_JA}, these sets are precisely the closed sets of the Zariski topology $\Zar_G$ of $G$ \cite{DS_JA}, having as closed sets the algebraic sets of $G$. Following \cite{DT}, call a group $G$ {\em Zariski-connected} (briefly, {\em $\Zar$-connected}) if $(G,\Zar_G)$ is connected. 

Since the notion of an unconditionally closed subset of a group $G$ involves checking closedness of this set in {\em every\/} Hausdorff group topology on $G$, in practice it is hard to decide if a given subgroup of $G$ is unconditionally closed in $G$ or not. Our next proposition provides two equivalent conditions.

\begin{proposition}
\label{reformulation:of:Markov:condition} For an abelian group $G$, the following conditions are equivalent: 
\begin{itemize}
\item[(a)] $G$ is {$\Zar$-connected}; 
\item[(b)] all proper unconditionally closed subgroups of $G$ have infinite index;
\item[(c)] for every $m\in\N$, either $mG =\{0\}$ or  $|mG| \ge \omega$. 
\end{itemize}
\end{proposition}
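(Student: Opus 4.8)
The plan is to prove the cycle of implications $(a)\Rightarrow(b)\Rightarrow(c)\Rightarrow(a)$. Two preliminary remarks will be used repeatedly. First, by the result recalled before the statement, the unconditionally closed subsets of $G$ are exactly the $\Zar_G$-closed sets. Second, $\Zar_G$ is a homogeneous topology: for each $a\in G$ the translation $x\mapsto x+a$ is a $\Zar_G$-homeomorphism, since the preimage of an elementary algebraic set $\{x:nx=g\}$ is $\{x:nx=g-na\}$, which is again elementary algebraic. In particular, every coset of a $\Zar_G$-closed subgroup is $\Zar_G$-closed.

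For $(a)\Rightarrow(b)$ I argue contrapositively. If $(b)$ fails, pick a proper unconditionally closed subgroup $H$ of finite index. Then $H$ is $\Zar_G$-closed and, by homogeneity, so is each of its finitely many cosets; hence $G\setminus H$ is a finite union of closed cosets, so $H$ is also open. Thus $H$ is a proper nonempty clopen subset and $G$ is $\Zar_G$-disconnected, i.e.\ $(a)$ fails. For $(b)\Rightarrow(c)$, again contrapositively: if $(c)$ fails there is $m\in\N$ with $mG$ finite and $mG\neq\{0\}$. The map $x\mapsto mx$ induces an isomorphism $G/G[m]\cong mG$, so $G[m]=\{x:mx=0\}$ is a proper (as $mG\neq\{0\}$) subgroup of finite index $|mG|$; being the solution set of $mx=0$ it is elementary algebraic, hence unconditionally closed, and $(b)$ fails.

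The substantive implication is $(c)\Rightarrow(a)$, which I would prove in the contrapositive form: if $G$ has a proper nonempty clopen set, then some $mG$ is finite and nontrivial. A clarifying reduction is the observation that if $mG$ is finite for some $m\ge 1$ then, with $k=|mG|=[G:G[m]]$, one has $kg\in G[m]$ and hence $(mk)g=0$ for every $g\in G$, so that $G$ is bounded; thus $(c)$ holds automatically for unbounded groups, whereas for bounded $G$ (written as in~\eqref{eq:1}) it is equivalent to the infinitude of all leading Ulm--Kaplansky invariants. To produce the desired $m$ from a proper clopen $U\ni 0$, I would first use that $\Zar_G$ is quasi-compact --- a filtered family of cosets of the subgroups $G[n]$ with the finite intersection property has nonempty intersection, because the greatest common divisor of the relevant moduli is attained on a finite subfamily --- in order to express $U$ and its complement as finite intersections of finite unions of elementary algebraic sets. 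Then $U$ is a Boolean combination of cosets of finitely many subgroups $G[n_1],\dots,G[n_r]$, each of which is $G[d]$-saturated for $d=\gcd(n_1,\dots,n_r)$; via $G/G[d]\cong dG$ this descends to a proper nonempty clopen subset of $dG$ in $\Zar_{dG}$. \textbf{The main obstacle is to convert such a clopen set into a finite-index subgroup of the form $G[m]$}, i.e.\ to rule out the degenerate situations in which this passage fails to simplify (for instance $d=1$, or $dG=G$ when $G$ is divisible). Resolving this is precisely where the finer description of the $\Zar_G$-closed subgroups from~\cite{DS_JA,DT} enters, and it is the only genuinely hard point of the argument.
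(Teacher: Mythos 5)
Your implications (a)$\Rightarrow$(b) and (b)$\Rightarrow$(c) are correct: using the identification of unconditionally closed sets with $\Zar_G$-closed sets, a proper unconditionally closed subgroup of finite index is $\Zar_G$-clopen (so (a) fails), and if $mG$ is finite and nontrivial then $G[m]$ is a proper elementary algebraic subgroup of finite index $|mG|$ (so (b) fails). The problem is that the cycle is never closed: for the substantive implication (c)$\Rightarrow$(a) you give only a programme and explicitly concede that its decisive step --- converting a proper nonempty $\Zar_G$-clopen set into a finite-index subgroup of the form $G[m]$ --- is unresolved. This is a genuine gap, not a presentational one. Moreover, the intermediate steps of your programme are themselves not free of charge: the claim that every clopen set is a Boolean combination of cosets of finitely many subgroups $G[n_1],\dots,G[n_r]$ already presupposes the Noetherian-type structure theory of $\Zar_G$ developed in \cite{DS_JA}; without that external input, nothing you have written rules out an exotic clopen set (say, in a divisible or torsion-free group), and quasi-compactness alone will not do so.

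What you are missing is exactly what the paper supplies by citation. The paper's proof invokes the classification from \cite{DS-relection,DS_JA}: every proper unconditionally closed subgroup of an abelian group $G$ has the form $G[m]$ for some $m>0$. This yields (b)$\Leftrightarrow$(c) at once, since the index of $G[m]$ equals $|G/G[m]|=|mG|$, and the equivalence (a)$\Leftrightarrow$(b) is then cited from \cite[Theorem 4.6]{DS_JA}. If you wish to finish your argument within your own framework, the shortest patch is Theorem \ref{Theorem:JA} (quoted later in the paper from the same source), which states that $c_\Zar(G)=G[n]$ where $n=eo(G)$. Assume (c). If $G$ is unbounded, then $n=eo(G)=0$ and $c_\Zar(G)=G[0]=G$. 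If $G$ is bounded, then $n=eo(G)$ is the least positive integer with $nG$ finite, and (c) forces $nG=\{0\}$, whence again $c_\Zar(G)=G[n]=G$. In both cases the connected component of $0$ is all of $G$, i.e., (a) holds. Thus the implication you could not finish is a two-line consequence of the structure theorem for $\Zar_G$; some such nontrivial input is unavoidable here, and as it stands your write-up does not prove the proposition.
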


\begin{proof}
According to \cite{DS-relection, DS_JA}, a proper unconditionally closed subgroup $H$ of $G$ has the form $H = G[m]$ for some $m>0$. Since 
$G/G[m]\cong mG$, the index of $H$ in $G$ coincides with $|G/H| = |G/G[m]|= |mG|$. This proves the equivalence of (b) and (c).
The equivalence of (a) and (b) is established in \cite[Theorem 4.6]{DS_JA}.  
\end{proof}

If an abelian group $G$ is not bounded torsion, then 
$G$ satisfies item (c) of Proposition \ref{reformulation:of:Markov:condition}, applying which we conclude that all proper unconditionally closed subgroups of $G$ have infinite index.
This shows that Problem \ref{Q:C} is a particular case of the more general Problem \ref{Main:question}.

The equivalence of items (a) and (b) in Proposition \ref{reformulation:of:Markov:condition} allows one to reformulate Problem \ref{Main:question} is follows: {\em Does every abelian $\Zar$-connected group admit a minimally almost periodic group topology?\/}

Based on Proposition \ref{reformulation:of:Markov:condition}, one can re-formulate the property of $\Zar$-connectedness for abelian groups of finite exponent
in terms of their Ulm-Kaplanski invariants: 

\begin{proposition}
\label{Kirku*}
A non-trivial abelian group $G$ of finite exponent  satisfies one (and then all) of the equivalent conditions of 
Proposition \ref{reformulation:of:Markov:condition} if and only if all leading Ulm-Kaplanski invariants of $G$ are infinite. 
\end{proposition}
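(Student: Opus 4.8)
The plan is to verify condition (c) of Proposition~\ref{reformulation:of:Markov:condition} by reading the size of $mG$ off the decomposition~\eqref{eq:1}. First I would reduce to the case $m=\prod_{p\in\pi(G)}p^{k_p}$: any factor of $m$ coprime to the (finite) exponent of $G$ acts as an automorphism of $G$ and so leaves $mG$ unchanged. For $m$ of this form, multiplication distributes over the direct sum and over the cyclic factors, giving
\[
mG=\bigoplus_{p\in\pi(G)} p^{k_p}\!\left(\bigoplus_{i=1}^{m_p}\Z(p^i)^{(\alpha_{p,i})}\right)
=\bigoplus_{p\in\pi(G)}\ \bigoplus_{i=k_p+1}^{m_p}\Z(p^{\,i-k_p})^{(\alpha_{p,i})},
\]
where the inner block is read as trivial when $k_p\ge m_p$. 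The whole argument rests on the elementary identities $p^{k}\Z(p^i)\cong\Z(p^{\,i-k})$ for $k<i$ and $p^{k}\Z(p^i)=\{0\}$ for $k\ge i$, applied summand by summand.

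With this formula both implications are immediate. For the ``if'' part, assume every leading invariant $\alpha_{p,m_p}$ is infinite and let $m$ satisfy $mG\ne\{0\}$. Then $k_p<m_p$ for some $p\in\pi(G)$, so the index $i=m_p$ survives in the corresponding inner sum and contributes the summand $\Z(p^{\,m_p-k_p})^{(\alpha_{p,m_p})}$; since $\alpha_{p,m_p}$ is infinite, this summand, and hence $mG$, is infinite. This is exactly condition (c).

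For the ``only if'' part, suppose some leading invariant is finite, say $\alpha_{q,m_q}<\omega$ with $q\in\pi(G)$ (recall that leading invariants are always positive). Set $k_q=m_q-1$ and $k_p=m_p$ for all $p\in\pi(G)\setminus\{q\}$, and take $m=\prod_{p\in\pi(G)}p^{k_p}$. Then every block with $p\ne q$ vanishes, while the $q$-block collapses to its single surviving term, so $mG\cong\Z(q)^{(\alpha_{q,m_q})}$ is non-trivial and finite, contradicting (c). I do not anticipate a genuine obstacle: the content is the structure theorem~\eqref{eq:1} together with the prime-power computation above, and the only point needing care is the observation that the leading index $i=m_p$ is precisely the one controlling finiteness of each $p$-block.
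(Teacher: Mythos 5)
Your proof is correct and takes essentially the same route as the paper's: both rest on the decomposition \eqref{eq:1}, reduce a general $m$ to one supported on $\pi(G)$ (the paper via $\gcd(m,k)$, you via noting that the coprime part of $m$ acts as an automorphism), and exploit the same witness $m=k/p$ together with the fact that the leading summand $\Z(p^{\,m_p-k_p})^{(\alpha_{p,m_p})}$ controls the cardinality of $mG$. The only cosmetic difference is that you compute $mG$ explicitly summand by summand, whereas the paper avoids the general formula by exhibiting $k_pG\cong\Z(p)^{(\alpha_{p,m_p})}$ as a subgroup of $mG$.
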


\begin{proof} A proof of this statement can be found in \cite{CD} (more precisely, this is the equivalence (a$_2$) and  (a$_3$) of \cite[Lemma 2.13]{CD}). For the sake of reader's convenience we include a proof here. 

Write the group $G$ as in \eqref{eq:1}, and let $k = \prod_{q\in \pi(G)} q^{m_q}$ be the exponent of $G$. In order to compute the leading Ulm-Kaplanski invariant $\alpha_{p,m_p}$ for $p\in \pi(G)$, let 
$$
k_p = \frac{k}{p}= p^{m_p-1}\cdot \prod_{q\in \pi(G)\setminus \{p\}} q^{m_q}.
$$ 
Then $k_pG \cong \Z(p)^{(\alpha_{p,m_p})}$, so  
\begin{equation}
\label{eq:2}
1< |k_pG|=\begin{cases} p^{\alpha_{p,m_p}}\ \mbox{ if $\alpha_{p,m_p}$ is finite}\\
\alpha_{p,m_p}
\ \mbox{ if $\alpha_{p,m_p}$
is infinite}.
\end{cases}
\end{equation}

If $G$ satisfies item (c) of Proposition \ref{reformulation:of:Markov:condition}, then $|k_pG | \ge \omega$ by \eqref{eq:2}, so $\alpha_{p,m_p}\ge \omega$ as well. 

Now suppose that all leading Ulm-Kaplanski invariants of $G$ are infinite.
Fix  $m\in \N$ with $|mG| > 1$. There exists at least one $p \in \pi(G)$, such that $p^{m_p}$ does not divide $m$.  Let $d$ be the greatest common divisor of $m$ and $k$. Then $mG = dG$, hence from now on we can assume without loss of generality that $m=d$ divides $k$.  As $p^{m_p}$ does not divide $m$, it follows that $m$ divides $k_p$.  Therefore, $k_p G$ is a subgroup of $mG$, and so $|mG| \geq |k_p G| = \alpha_{p,m_p} \geq \omega$ by \eqref{eq:2}. 
This shows that, for every $m\in\N$, either $mG =\{0\}$ or  $|mG| \ge \omega$. Therefore, $G$ satisfies item (c) of Proposition \ref{reformulation:of:Markov:condition}.
\end{proof}

\section{Characterization of abelian groups admitting a \map\ group topology}
\label{sec:main:results}

Our first theorem provides a final solution of Problem \ref{Q:C}.

\begin{theorem}\label{CH:theorem} Every unbounded abelian group admits a \map \ group topology.
\end{theorem}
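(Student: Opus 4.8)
\emph{Toolkit.} The plan is to assemble a MinAP topology on $G$ out of countable pieces, and for this I would first record three stability properties of the class of MinAP abelian groups. (i) An arbitrary product $\prod_{i\in I}T_i$ of MinAP groups is MinAP: if $\chi$ is a continuous character, pick a basic neighbourhood $\prod_i U_i$, with $U_i=T_i$ off a finite set $F$, on which $\chi$ takes values in the arc $(-\tfrac14,\tfrac14)$; this neighbourhood contains each factor $T_j$ with $j\notin F$, so $\chi(T_j)$ is a subgroup of $\T$ inside an arc and hence trivial, whence $\chi$ factors through $\prod_{i\in F}T_i$, on which it is the sum of its (trivial) restrictions to the factors. (ii) If a topological abelian group $X$ has a dense MinAP subgroup $D$, then $X$ is MinAP, since a continuous character of $X$ restricts to a trivial character of $D$ and so vanishes on $\overline{D}=X$. (iii) In particular, the Raikov completion of a MinAP group is MinAP. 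Finally I would invoke Lemma \ref{Saak}, which provides a MinAP group topology on every countably infinite unbounded abelian group; these are the building blocks.

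\emph{Reduction.} By (ii) it suffices to equip $G$ with a Hausdorff group topology $\tau$ possessing a dense MinAP subgroup. The natural attempt is to choose a countable unbounded subgroup $D\le G$—available because $G$ is unbounded—fix a MinAP topology $\sigma$ on $D$ by Lemma \ref{Saak}, and \emph{extend} $\sigma$ to a group topology $\tau$ on $G$ in which $D$ stays dense and $\tau|_{D}=\sigma$. Concretely, one would embed $G$ into the completion $\widehat{D}$ of $(D,\sigma)$ over $D$ and pull the topology back, obtaining $D\le G\le\widehat{D}$; as $D$ is dense in $\widehat{D}$ it is dense in $G$, and $(G,\tau)$ is MinAP by (ii). For groups larger than the continuum one replaces the single countable $D$ by a product or direct sum of countable unbounded MinAP groups—still MinAP by (i)—chosen so that its completion accommodates $G$.

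\emph{Construction and obstacle.} Producing $D$ together with the embedding $G\hookrightarrow\widehat{D}$ is the heart of the matter, and I would organise it by the structure of $G$: if $G$ has an element of infinite order one uses targets built from $\Z$ and $\Q$, while if $G$ is torsion and unbounded one works prime by prime, the unboundedness furnished by Proposition \ref{reformulation:of:Markov:condition}(c) guaranteeing either an unbounded $p$-component or infinitely many nonzero ones (compare the invariant computations of Proposition \ref{Kirku*}). The main obstacle is precisely this extension step: a group topology on a subgroup need not extend to the ambient group with the subgroup remaining dense, so $(D,\sigma)$ must be engineered so that $G$ genuinely embeds into $\widehat{D}$. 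The decisive test case is a group such as $\prod_{n}\Z(p^{n})$, whose product topology is compact and therefore maximally—rather than minimally—almost periodic; for such $G$ the topology $\sigma$ must be a genuinely non-product MinAP topology on the dense subgroup $\bigoplus_{n}\Z(p^{n})$ whose completion recaptures the full product. Reconciling density, Hausdorffness, and the embedding into $\widehat{D}$ uniformly across all unbounded $G$ is where I expect essentially all of the difficulty to reside.
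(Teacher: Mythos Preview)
Your outline captures the correct high-level strategy—embed $G$ densely in a MinAP group built from countable pieces—and your toolkit facts (i)–(iii) are precisely the permanence properties the paper collects in Lemma~\ref{easy:lemma}. But the argument stops exactly where the real work begins: you correctly name the obstacle (extending the topology from $D$ to $G$, equivalently embedding $G$ into $\widehat D$) and then concede that this is ``where essentially all of the difficulty resides.'' The paper resolves it by abandoning the abstract Raikov completion $\widehat D$ in favour of a \emph{concrete} MinAP target whose algebraic structure is fully known: the Hartman--Mycielski group $\HMi(\T)$, which is divisible with $r_p=\cont$ for every $p\in\{0\}\cup\Prm$, and its powers $\HMi(\T)^\kappa$. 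Divisibility is what makes the extension step work: a monomorphism $H\to\HMi(\T)^\kappa$ from any subgroup $H\le G$ with $|G|\le\kappa$ extends to a monomorphism $G\to\HMi(\T)^\kappa$ (Lemma~\ref{New:claim}), so it suffices to embed \emph{some} subgroup of $G$ densely. With an arbitrary countable MinAP $(D,\sigma)$ there is no reason for $\widehat D$ to be divisible or to have large $p$-ranks, so the embedding $G\hookrightarrow\widehat D$ your plan requires need not exist at all.

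The second missing ingredient is structural: one must locate inside $G$ a subgroup that \emph{can} be densely embedded in a power of $\HMi(\T)$. The paper does this in two stages. First (Lemma~\ref{homogeneous:split}) any unbounded abelian $G$ splits as $N\oplus H$ with $N$ bounded having all Ulm--Kaplansky invariants infinite and $H$ $w$-divisible; $N$ is handled separately by Theorem~\ref{new:corollary1}. Second, an uncountable $w$-divisible $H$ contains a direct sum $\bigoplus_{i\in I}H_i$ of $|H|$ many countable unbounded subgroups (a nontrivial input from \cite{DS_ConnectedMarkov}); each $H_i$ embeds densely in $\HMi(\T)$ by Theorem~\ref{embedding:countable:groups}, so the direct sum embeds densely in $\HMi(\T)^{|H|}$, and the extension lemma lifts this to all of $H$. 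Your case split ``element of infinite order vs.\ unbounded torsion'' is reasonable for the countable case (and indeed Lemma~\ref{Saak} proceeds roughly that way), but it does not produce the large direct sum needed for uncountable $G$, nor does it isolate the bounded summand $N$ that must be treated by a different mechanism. In short, the strategy is sound, but the two load-bearing ideas—replacing $\widehat D$ by the divisible universal target $\HMi(\T)^\kappa$, and the $w$-divisible decomposition supplying enough independent countable pieces—are absent from the proposal.
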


The proof of Theorem \ref{CH:theorem} is postponed until Section \ref{proofs}.

The complementary bounded case is covered by the next theorem.

\begin{theorem} \label{new:corollary1}
For a bounded abelian group $G$ the following are equivalent: 
\begin{itemize}
\item[(a)]  $G$ admit a \map \ group topology; 
\item[(b)]  $G$ is {$\Zar$-connected}; 
\item[(c)] all leading Ulm-Kaplanskly invariants of $G$ are infinite.
\end{itemize}
\end{theorem}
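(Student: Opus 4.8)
The three conditions collapse to a single nontrivial implication once the earlier machinery is invoked, so I would organize the proof around that reduction. For the trivial group all three conditions hold (vacuously for (c)), so assume $G$ is non-trivial. The implication (a)$\Rightarrow$(b) is immediate from Corollary \ref{cor:necessary:condition} together with the equivalence of items (a) and (b) of Proposition \ref{reformulation:of:Markov:condition}: a \map\ topology forces every proper unconditionally closed subgroup to have infinite index, which is precisely $\Zar$-connectedness. Since a bounded group has finite exponent, the equivalence (b)$\Leftrightarrow$(c) is exactly Proposition \ref{Kirku*}. Thus everything reduces to producing, from the hypothesis that all leading Ulm-Kaplanski invariants of $G$ are infinite, an actual \map\ group topology on $G$; that is, to proving (c)$\Rightarrow$(a).

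First I would reduce to a single prime. Because $G$ is bounded, the set $\pi(G)$ in \eqref{eq:1} is finite, so $G=\bigoplus_{p\in\pi(G)}G_p$ is a \emph{finite} direct sum of its $p$-components $G_p$. A finite direct sum carries the product topology, and the characters of a finite product are exactly the products of the characters of the factors; hence if each $G_p$ is equipped with a \map\ topology, the product topology on $G$ is again \map, and it is Hausdorff. Moreover the leading invariant of each $G_p$ is one of the leading invariants of $G$, hence infinite. This reduces the problem to a bounded $p$-group $G_p=\bigoplus_{i=1}^{m}\Z(p^i)^{(\alpha_i)}$ whose top invariant $\alpha_m$ is infinite, while the lower invariants $\alpha_i$ ($i<m$) are arbitrary cardinals, possibly finite or even larger than $\alpha_m$.

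The heart of the matter, and the step I expect to be the main obstacle, is the construction of the topology on such a $p$-group. One cannot topologize the homogeneous layers separately: a layer $\Z(p^i)^{(\alpha_i)}$ with $\alpha_i$ finite is a non-trivial finite group and therefore carries non-trivial characters, so the topology must \emph{entangle} the socle of each lower layer with the large top layer $\Z(p^m)^{(\alpha_m)}$. Every character of a bounded $p$-group takes values in the $p^m$-torsion of $\T$, and the goal is a Hausdorff group topology in which every non-trivial such character fails to be continuous. In the countable case this is the $T$-sequence technique of Zelenyuk--Protasov \cite{ZP} used by Gabriyelyan \cite{G3}: one chooses a single sequence converging to $0$ so cunningly that the only characters respecting it are trivial, while arranging that each generator of a finite lower layer lies in the closure of the subgroup generated by elements of order $p^m$, forcing its value under any continuous character to vanish.

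For the general, possibly uncountable, group the single convergent sequence must be replaced by a transfinite device. The plan is to run a transfinite diagonalization over all non-trivial characters, building a filter of subgroups (equivalently, a family of null sets) that defines a Hausdorff group topology which simultaneously (i) defeats every character, by exhibiting for each a witness along which it does not converge to $0$, and (ii) pulls each socle element of every lower layer into the closure of the top layer. The infinitude of $\alpha_m$ supplies the ``room'' in $\Z(p^m)^{(\alpha_m)}$ needed to carry out both tasks using fresh elements of order $p^m$ at each stage. The principal difficulty is the cardinal bookkeeping required to keep the construction Hausdorff while simultaneously killing characters across an index set that may exceed $|G|$, together with the entanglement of the finite lower layers; this is precisely where the methods developed for the proof of Theorem \ref{CH:theorem} in Section \ref{proofs} would be brought to bear.
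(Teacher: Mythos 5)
Your treatment of (a)$\Rightarrow$(b) and (b)$\Leftrightarrow$(c), and your reduction of (c)$\Rightarrow$(a) to a single bounded $p$-group with infinite leading Ulm-Kaplanski invariant, are correct and agree with the paper. But the proof stops exactly where the real content begins: for that $p$-group you never actually construct a \map\ topology. What you offer instead --- a transfinite diagonalization over all non-trivial characters, building a filter of subgroups that simultaneously defeats every character and entangles the finite lower layers with the top layer --- is a plan, not an argument, and you yourself flag its unresolved core: for an uncountable bounded group $G$ the characters number $2^{|G|}$, so a recursion that must handle each of them cannot be run in $|G|$ steps without a new idea that you do not supply. Appealing to ``the methods of Section \ref{proofs}'' does not close this gap either; the machinery there is not a diagonalization of this kind, and in fact the paper's proof of Theorem \ref{CH:theorem} \emph{cites} the bounded case (Corollary \ref{new:corollary}) rather than the other way around, so the dependence runs in the opposite direction.

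The missing idea is that one can avoid characters altogether. The paper calls $N=\bigoplus_{i=1}^n\Z(p^i)^{(\alpha_i)}$ \emph{nice} when $\alpha_n$ is infinite and $\alpha_1,\dots,\alpha_{n-1}$ are finite, and shows (Lemma \ref{nice:groups}, an easy induction on the number of infinite invariants) that your bounded $p$-group is a \emph{finite} direct sum of nice groups; by Lemma \ref{easy:lemma} it then suffices to topologize a nice group $N=F\oplus G$ with $F$ finite and $G=\Z(p^n)^{(\alpha_n)}$. Since $\alpha_n$ is infinite, $G\cong G^{(\omega)}\cong H(G,S)$, which is a dense subgroup of the Hartman-Mycielski group $\HMi(G)$ for any countable dense $S\subseteq(0,1]$ (Lemma \ref{HM:groups}), while the finite part $F$ embeds into the complementary summand $H(G,(0,1]\setminus S)\cong\Z(p^n)^{(\cont)}$; hence $N$ embeds densely in $\HMi(G)$. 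The group $\HMi(G)$ is connected and torsion, hence \map\ (Proposition \ref{proposition:Matsuyama1}), and Corollary \ref{easy:corollary} finishes the proof. This realizes precisely the ``entanglement'' of the finite layers with the top layer that you correctly identified as unavoidable, but by a dense embedding into a connected torsion group rather than by any filter, $T$-sequence, or diagonalization device; without that (or some substitute actually carried out), your proposal does not prove (c)$\Rightarrow$(a).
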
 

The 
equivalence of (a) and (c)
characterization of bounded abelian groups admitting a \map\ group topology 
has been  recently obtained by 
Gabriyelyan \cite[Corollary 1.4]{G3}. 
Since our  proof of Theorem \ref{new:corollary1}  based on the same ideas as that of Theorem  \ref{CH:theorem} is also  much shorter and simpler than Gabriyelyan's proof, we decided to provide it in Section \ref{proofs0} for the sake of completeness.

Our third theorem unifies Theorems
\ref{CH:theorem}
and
\ref{new:corollary1}, thereby providing a final solution to the general Problem \ref{Main:question}.

\begin{theorem}\label{main:theorem}
For an abelian group an abelian group $G$, the following are equivalent: 
\begin{itemize}
\item[(a)] $G$ admits a \map \ group topology; 
\item[(b)] $G$ is {$\Zar$-connected}; 
\item[(c)] all proper unconditionally closed subgroups of $G$ have infinite index;  
\item[(d)] for every $m\in\N$, either $mG =\{0\}$ or  $|mG| \ge \omega$. 
\end{itemize}
\end{theorem}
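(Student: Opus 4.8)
The plan is to exploit the fact that the real mathematical content of Theorem~\ref{main:theorem} has already been isolated in the two preceding theorems, so that the proof reduces to a bookkeeping argument together with one case distinction. First I would observe that conditions (b), (c) and (d) of Theorem~\ref{main:theorem} are \emph{verbatim} the three conditions (a), (b) and (c) of Proposition~\ref{reformulation:of:Markov:condition}; hence their mutual equivalence is immediate and requires nothing new. This collapses the theorem to establishing that condition (a)---the existence of a \map\ group topology---is equivalent to (any one of) the purely algebraic conditions, and I would work with (d).

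For the implication (a)$\Rightarrow$(c) there is nothing to prove beyond invoking Corollary~\ref{cor:necessary:condition}: a \map\ group topology forces every proper unconditionally closed subgroup to have infinite index. (This in turn rests on Proposition~\ref{Necessary:condition}, the elementary observation that a nontrivial character into a compact group would split off a proper closed subgroup of finite index.)

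The substantive direction is (d)$\Rightarrow$(a), and here I would split on whether $G$ is bounded. If $G$ is unbounded, then Theorem~\ref{CH:theorem} already supplies a \map\ group topology outright, with no appeal to (d) needed. If $G$ has finite exponent, then by Proposition~\ref{Kirku*} condition (d) is equivalent to the assertion that all leading Ulm-Kaplansky invariants of $G$ are infinite, which is precisely condition (c) of Theorem~\ref{new:corollary1}; that theorem then produces the required topology. These two cases are exhaustive, so the cycle (a)$\Rightarrow$(c)$\Leftrightarrow$(b)$\Leftrightarrow$(d)$\Rightarrow$(a) closes.

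I do not expect a genuine obstacle here, since all the difficulty has been pushed into Theorems~\ref{CH:theorem} and~\ref{new:corollary1}, whose proofs are deferred. The only points demanding care are purely formal: ensuring that the boundedness dichotomy is exhaustive, and that the degenerate case of the trivial group---which is vacuously \map\ and satisfies (d)---is accounted for, since Proposition~\ref{Kirku*} is stated only for nontrivial groups of finite exponent.
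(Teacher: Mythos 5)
Your proposal is correct and follows essentially the same route as the paper: the paper's own (very terse) proof likewise splits on whether $G$ is bounded, invoking Theorem~\ref{CH:theorem} together with Proposition~\ref{reformulation:of:Markov:condition} in the unbounded case, and Propositions~\ref{reformulation:of:Markov:condition}, \ref{Kirku*} and Theorem~\ref{new:corollary1} in the bounded case. Your explicit closing of the cycle via Corollary~\ref{cor:necessary:condition} and your remark about the trivial group (which Proposition~\ref{Kirku*} formally excludes) are minor organizational refinements of the same argument, not a different approach.
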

\begin{proof}
Indeed, if $G$ is unbounded, then Theorem \ref{CH:theorem} and Proposition \ref{reformulation:of:Markov:condition} apply. If $G$ is bounded, then
Proposition \ref{reformulation:of:Markov:condition}, Proposition \ref{Kirku*} and  Theorem \ref{new:corollary1} apply. 
\end{proof}

Theorem \ref{main:theorem} answers positively also the following  weaker versions of  Problem \ref{Main:question} that  also remained open:

\begin{itemize}
\item[(a)]  \cite[Question 3.10]{G3} If an unbounded abelian group $G$ admits a compact group topology, does $G$ admit also a \map \ group topology? 
\footnote{The group $ G = \Z(2)\times \Z(3)^\omega$ from Example \ref{Remus:example} obviously carries a compact group topology. This explains the restriction of unboundedness of $G$ in item (a).} What about the groups $\prod_p\Z(p)$, or $\Delta_p$, where $\Delta_p$ is the group of $p$-adic integers ? 
\item[(b)] \cite[Question 4.1]{G3} Let $G$ be an uncountable abelian group of infinite exponent (for example, $G$ is an uncountable torsion-free abelian group). Does $G$  
admit a \map \  group topology? 
\item[(c)] \cite[Question 4.5]{G3} Does every uncountable indecomposable Abelian group admit
a \map \  group topology?
\end{itemize}

\medskip
Finally, Theorem  \ref{CH:theorem}  answers also the following question from  \cite{G0,G3} generalizing Problem \ref{Q:C}: 

\begin{itemize}
\item[(q)] \cite[Question 4.4]{G3} (also \cite[Problem 5]{G0}) 
Describe all infinite abelian groups $G$ that admit a \map \ group topology.
\end{itemize}

\medskip

In \cite{G}, the following two questions were raised. Let $H$ be a normal subgroup of a group $G$. 
\begin{itemize}
\item[(1)] If we are given minimally almost periodic topologies for $H$ and $G/H$, is there always a minimally almost periodic topology for $G$ which generates those topologies on $H$ and $G/H$?
\item[(2)] If both $H$ and $G/H$ admit minimally almost periodic topologies, must $G$ admit a minimally almost periodic topology?
\end{itemize}

Our last theorem negatively answers both questions. Indeed, item (c) of the next theorem provides a stronger negative answer to (1), while 
the conjunction of items (a) and (b) of this theorem obviously answers item (2) in the negative. 

\begin{theorem}\label{Frank}
There exists  a countable abelian group $G$ with a subgroup $H$ having the following properties: 
\begin{itemize}
\item[(a)]  $G$ does not admit a \map\ group topology;
\item[(b)]  both $H$ and $G/H\cong H$ admit \map\ group topologies; 
\item[(c)]  there does not exist any group topology $\tau$ on $G$ such that both $H$ and $G/H$, equipped with the induced and the quotient topology of $\tau$ respectively, can be simultaneously minimally almost periodic. 
\end{itemize}
\end{theorem}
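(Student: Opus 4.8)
The plan is to construct a concrete countable abelian group exhibiting all three properties, exploiting the characterization provided by Theorem~\ref{main:theorem} together with Example~\ref{Remus:example}. The natural candidate is a variant of Remus's group: I would take $G = \Z(2) \times \Z(3)^{(\omega)}$, the direct sum rather than the product, so that $G$ is countable. First I would verify item~(a): the projection $p : G \to \Z(2)$ kills the subgroup $3G = \{x \in G : 3x = 0\}^{\perp}$-type complement, and more precisely $G[3]$ is an unconditionally closed subgroup of index $2$, so by Corollary~\ref{cor:necessary:condition} (equivalently, by the failure of condition~(d) in Theorem~\ref{main:theorem}, since $2G = \Z(3)^{(\omega)}$ has $|2G| \geq \omega$ but $3G = \Z(2)$ is finite and nonzero), $G$ admits no \map\ group topology.

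Next I would choose the subgroup $H$ so that both $H$ and $G/H$ are \map-admissible while $G$ is not. The obstruction to \map-admissibility of $G$ is the finite nonzero subgroup $3G \cong \Z(2)$; the idea is to split this obstruction between $H$ and the quotient. A workable choice is $H = \{0\} \times \Z(3)^{(\omega)}$, an infinite direct sum of copies of $\Z(3)$: its leading (and only) Ulm--Kaplanski invariant $\alpha_{3,1}$ is infinite, so by Theorem~\ref{new:corollary1} it admits a \map\ group topology. The quotient $G/H \cong \Z(2)$ is finite, however, and finite groups admit no \map\ topology, so this naive choice fails item~(b). To repair this I would instead distribute an infinite $\Z(3)$-part into the quotient as well: take $G = \Z(2) \times \Z(3)^{(\omega)}$ and let $H$ be a subgroup isomorphic to $\Z(3)^{(\omega)}$ chosen so that $G/H \cong \Z(2) \times \Z(3)^{(\omega)} / (\text{something}) \cong \Z(3)^{(\omega)}$ as well; concretely, split $\Z(3)^{(\omega)} = A \oplus B$ with $A \cong B \cong \Z(3)^{(\omega)}$ and set $H = \{0\} \times A$, so $H \cong \Z(3)^{(\omega)}$ and $G/H \cong \Z(2) \times B \cong \Z(2) \times \Z(3)^{(\omega)}$. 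The difficulty is that $G/H$ is then isomorphic to $G$ itself and inherits the same obstruction, failing item~(b). The correct resolution is to put the finite $\Z(2)$ inside $H$ together with an infinite $3$-part: take $H = \Z(2) \times A$ with $A \cong \Z(3)^{(\omega)}$, so $H \cong \Z(2) \times \Z(3)^{(\omega)}$ fails (b), which again does not work. Thus the genuinely delicate point is that neither $H$ nor $G/H$ may inherit the ``finite leading invariant'' defect, yet $G$ must.

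This forces a more clever group. I would abandon Remus's group and instead build $G$ as a countable group whose defect arises only through the \emph{interaction} of $H$ and $G/H$, not from either factor individually. A clean mechanism: take $H \cong G/H \cong \Z$ (or some countable unbounded group), each of which admits a \map\ topology by Theorem~\ref{CH:theorem}, but arrange a non-split extension $0 \to H \to G \to G/H \to 0$ whose middle term $G$ is a bounded group with a finite leading Ulm--Kaplanski invariant, hence fails condition~(c) of Theorem~\ref{new:corollary1}. The archetype is $G = \Z(2) \times \Z(3)^{(\omega)}$ with $H \cong \Z(3)^{(\omega)} \cong G/H$: here I split $\Z(3)^{(\omega)} = A \oplus B$, realize $G/H \cong \Z(3)^{(\omega)}$ via a diagonal/twisted embedding of $H$ that projects isomorphically onto $B$ while absorbing the $\Z(2)$ into neither quotient nor subgroup cleanly — this is exactly where item~(c) becomes stronger than item~(b). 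The main obstacle, and the crux of the whole argument, is item~(c): I must show that for \emph{every} (not necessarily Hausdorff, or at least every Hausdorff) group topology $\tau$ on $G$, the induced topology on $H$ and the quotient topology on $G/H$ cannot \emph{both} be \map. The key idea is that any such $\tau$ yields, via Proposition~\ref{Necessary:condition} and the closedness of $G[3]$, a nontrivial character on at least one of $H$ or $G/H$: the unconditionally closed subgroup $G[3]$ of index $2$ is $\tau$-closed, the induced character $G \to \Z(2)$ must restrict trivially to any \map\ subgroup and factor through any \map\ quotient, and a counting/parity argument on the index $2$ subgroup shows these two requirements are jointly contradictory. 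Carrying out this joint obstruction — tracking how the rigid finite quotient $\Z(2)$ distributes across the exact sequence under an arbitrary topology — is the hard part; the existence claims (a) and (b) are then immediate from Theorem~\ref{main:theorem}.
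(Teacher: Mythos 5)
Your proposal gets two ingredients right---item (a) via a group with $mG$ finite and nonzero, and the mechanism for item (c) (a character of $G$ whose restriction to a \map\ subgroup $H$ is trivial factors through $G/H$, so (c) reduces to (a))---but it never produces a pair $(G,H)$ for which (a) and (b) hold simultaneously, and the constructions you sketch cannot be repaired. First, note that any $G$ failing (a) must be \emph{bounded}, by Theorem \ref{CH:theorem}; hence $H$ and $G/H$ are bounded too, so your suggestion ``$H\cong G/H\cong\Z$ with $G$ bounded'' is outright impossible (a bounded group has no elements of infinite order; moreover any extension of $\Z$ by $\Z$ splits, giving $G\cong\Z^2$, which does admit a \map\ topology). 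Second, all of your attempts inside $\Z(2)\oplus\Z(3)^{(\omega)}$ are blocked by primary decomposition: in a torsion group every subgroup $H$ splits as $(H\cap G_2)\oplus(H\cap G_3)$ into its $p$-components, and the quotient splits accordingly, so the $\Z(2)$ summand necessarily survives as a direct summand of either $H$ or $G/H$; whichever of the two receives it has finite nonzero leading Ulm-Kaplanski invariant at $p=2$ and therefore admits no \map\ topology by Theorem \ref{new:corollary1}. There is no ``diagonal/twisted embedding'' that absorbs the $\Z(2)$ into neither side---this is a structural impossibility, not a failure of cleverness, and it shows that no example can mix two distinct primes.

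The missing idea is to work with a \emph{single} prime and let the defect be a non-split extension inside that prime: $\Z(4)$ is an essential extension of $\Z(2)$ by $\Z(2)$, so its socle $2\Z(4)$ can be placed in $H$ while its top $\Z(4)/2\Z(4)\cong\Z(2)$ is absorbed into an infinite $\Z(2)^{(\omega)}$ part of the quotient. The paper takes $V=\Z(2)^{(\omega)}\oplus\Z(4)$, $G=\Z(2)^{(\omega)}\oplus V$ and $H=V[2]$. Then $2G\cong\Z(2)$ is finite and nonzero (equivalently, the leading Ulm-Kaplanski invariant $\alpha_{2,2}=1$ is finite), so $G$ fails condition (d) of Theorem \ref{main:theorem}, giving (a); on the other hand $H\cong\Z(2)^{(\omega)}$ and $G/H\cong\Z(2)^{(\omega)}\oplus\Z(2)\cong\Z(2)^{(\omega)}$, so both admit \map\ topologies by Theorem \ref{new:corollary1}, giving (b); and (c) follows by exactly the factoring argument you describe. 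Note that this extension $0\to H\to G\to G/H\to 0$ indeed does not split (otherwise $G\cong\Z(2)^{(\omega)}$ would have exponent $2$), which confirms your intuition that non-splitness is the crux---but the extension must live inside one prime, which is precisely what your two-prime attempts could never achieve.
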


\begin{proof}
Let  $V = \Z(2)^{(\omega)}\oplus \Z(4)$, $G = \Z(2)^{(\omega)}\oplus V$ and $H = V[2]$.

(a) Since  $G[2]=V[2]$  has index $2$ in $G$, it follows from  Proposition \ref{Necessary:condition}
that $G$ admits no \map \ group topologies.
 
(b) Obviously, $H \cong G/H \cong\Z(2)^{(\omega)}$.  Applying Theorem \ref{new:corollary1}, we conclude that $H \cong G/H$ admits a \map\ group topology.

(c) Assume that for some group topology $\tau$ on $G$ both $H$ and $G/H$ (equipped with the induced and the quotient topology of $\tau$, respectively)  are 
minimally almost periodic.  Then $\tau$ itself is minimally almost periodic.
 (A proof can be found, for example, in \cite[Theorem 2.2.10]{G}.) \footnote{For reader's convenience we propose
a short argument here. If $\chi: G\to \T$ is a character, then $\chi\restriction_H$ must be trivial by hypothesis. Therefore, there exists a character $\xi : G/H \to \T$ such that $\chi = \xi \circ q$, where $q: G\to G/H$ is the canonical homomorphism.  As $\xi$ is trivial again by  hypothesis, we conclude that $\chi$ is trivial.
This proves that $(G,\tau)$ is \map.} This contradicts item (a).
\end{proof}

\section{The realization problem of von Neumann's kernel}

For a topological abelian group $G$, the {\em von Neumann kernel} $n(G)$ of $G$ is the subgroup of all points of $G$ where each character of $G$ vanish.
Clearly,  $G$ is minimally almost-periodic (maximally almost-periodic) precisely when $n(G) = G$ ($n(G) = \{0\}$, respectively). 

\begin{definition}\label{Definition:potentially:vN}
Let $H$ be a subgroup of an abelian group $G$. We say that $H$ is a {\em potential von Neumann kernel} of $G$, if there exists a Hausdorff group topology $\tau$ on $G$ such that $n(G,\tau) = H$. 
\end{definition} 

 The following problem, raised in \cite{G0}, is a generalization of the problem of finding a \map\ group topology on an abelian group considered in Section \ref{sec:main:results}; indeed, an abelian group $G$ admits a \map\ group topology if and only if $G$ is \pvN\ of itself.

\begin{problem}\label{Gen:Prob}\cite{G0}
Describe all potential von Neuman kernels of a given abelian group $G$.  
\end{problem} 

Gabriyelyan 
resolved this problem for bounded abelian groups.
 
 \begin{theorem}\label{corollary2} \cite[Theorem 1.2]{G3} 
Let $G$ be a bounded abelian group and let $H$ be a subgroup of $G$. Then $H$ is a \pvN \ of $G$ if and only if 
$G$ contains $\Z(k)^{(\omega)}$, where $k = exp(H)$ is the exponent of $H$. 
 \end{theorem}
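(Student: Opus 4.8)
The plan is to prove Theorem~\ref{corollary2} by establishing both implications separately, relying on the machinery developed for the von Neumann kernel realization problem together with the characterization of MinAP topologies on bounded groups from Theorem~\ref{new:corollary1}.

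\textbf{Necessity.} Suppose $H$ is a potential von Neumann kernel of $G$, so there exists a Hausdorff group topology $\tau$ on $G$ with $n(G,\tau)=H$, and set $k=\exp(H)$. I would first observe that $H$, equipped with the subspace topology induced by $\tau$, is itself minimally almost periodic: indeed, every character $\chi$ of $(G,\tau)$ vanishes on $H$ by definition of $n(G,\tau)$, and every character of the subgroup $H$ extends (up to the obstruction coming from the divisibility of $\T$) to a character of $G$, which forces every character of $H$ to be trivial. Since $H$ is bounded of exponent $k$, Theorem~\ref{new:corollary1} applies to $H$, so $H$ is $\Zar$-connected and by Proposition~\ref{Kirku*} all leading Ulm--Kaplanski invariants of $H$ are infinite. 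In particular, writing $k=\exp(H)$, the group $H$ contains a direct summand isomorphic to $\Z(k)^{(\omega)}$: the top invariant $\alpha_{p,m_p}$ for the prime power $p^{m_p}$ dividing $k$ is infinite, and assembling the top cyclic summands over all primes in $\pi(H)$ yields a copy of $\Z(k)^{(\omega)}$ inside $H\subseteq G$. Hence $G$ contains $\Z(k)^{(\omega)}$.

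\textbf{Sufficiency.} Conversely, suppose $G$ contains $\Z(k)^{(\omega)}$, where $k=\exp(H)$. The goal is to build a Hausdorff group topology $\tau$ on $G$ whose von Neumann kernel is exactly $H$. The natural strategy is to split the construction along $H$: one wants a topology making the characters of $G$ vanish precisely on $H$ and no more. On $H$ itself one needs a MinAP topology, which is available because $\Z(k)^{(\omega)}\subseteq G$ guarantees (again via Theorem~\ref{new:corollary1} and Proposition~\ref{Kirku*}) that $H$ admits one; on a complementary part one needs a maximally almost periodic topology so that the remaining characters separate points outside $H$. I would realize this by choosing a group homomorphism or decomposition $G \to G/H$ and equipping $G/H$ with a MAP (for instance precompact) topology, then lifting it and amalgamating it with the MinAP topology on $H$ via the standard extension of group topologies along the exact sequence $0\to H\to G\to G/H\to 0$. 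The von Neumann kernel of the resulting topology then computes to $H$: characters are trivial on $H$ by the MinAP part, while the MAP part on $G/H$ ensures no point outside $H$ lies in the kernel.

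\textbf{Main obstacle.} The delicate point is the sufficiency direction, specifically the amalgamation step: combining a MinAP topology on $H$ with a MAP topology on $G/H$ so that the resulting kernel is \emph{exactly} $H$ and not larger. One must verify both inclusions $n(G,\tau)\supseteq H$ (automatic from triviality of characters on $H$) and $n(G,\tau)\subseteq H$, and the latter requires that every coset outside $H$ be detected by some character, which in turn depends on $\Z(k)^{(\omega)}$ supplying enough ``room'' for the MinAP part to absorb all of $H$ while leaving the quotient separated. The embedding $\Z(k)^{(\omega)}\hookrightarrow G$ is precisely what makes this balancing possible, since $k=\exp(H)$ means $H\subseteq G[k]$ and the infinite-rank $k$-bounded summand can be used to kill exactly the characters that would otherwise survive on $H$. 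I expect the technical core to be the explicit description of the characters of the amalgamated topology and the verification that the kernel does not overshoot $H$.
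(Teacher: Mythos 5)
First, a point of context: the paper does not prove Theorem~\ref{corollary2} at all --- it is quoted from Gabriyelyan \cite{G3} and used as a black box (in Case~1 of the proof of Theorem~\ref{Main:Conjecture}), so your proposal must be judged on its own merits; and on its own merits it fails, in both directions, for the same reason. Your necessity argument rests on the claim that $H=n(G,\tau)$, in the subspace topology, is \map, because continuous characters of $H$ ``extend'' to characters of $G$. Divisibility of $\T$ only extends \emph{abstract} homomorphisms; it does nothing for continuity, and continuous characters of a subgroup of a topological abelian group need not extend continuously. Moreover, the conclusion you draw --- that $H$ itself has infinite leading Ulm--Kaplanski invariants and hence contains $\Z(k)^{(\omega)}$ --- is strictly stronger than the theorem and is simply false: taking $G=\Z(k)^{(\omega)}$ and $H$ a single cyclic summand $\Z(k)$, the theorem itself asserts that this finite nontrivial $H$ \emph{is} a \pvN\ of $G$ (this is exactly the ``almost maximally almost periodic'' phenomenon studied in \cite{G0}), yet a nontrivial finite Hausdorff group is discrete, hence never \map, and certainly contains no copy of $\Z(k)^{(\omega)}$. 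The correct necessity argument is the one already assembled in the paper: by Lemma~\ref{lemma:necessary:condition} and Theorem~\ref{Theorem:JA}, every \pvN\ satisfies $H\subseteq c_\Zar(G)=G[eo(G)]$ (Corollary~\ref{corollary3*}), and the equivalence (a)$\Leftrightarrow$(c) of Lemma~\ref{lemma3} converts this containment into ``$G$ contains $\Z(k)^{(\omega)}$''. Note that the condition constrains $G$, not $H$.

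The same error undermines your sufficiency argument: you assert that $\Z(k)^{(\omega)}\subseteq G$ guarantees, via Theorem~\ref{new:corollary1}, that $H$ admits a \map\ group topology. It guarantees no such thing, since the hypothesis concerns the Ulm--Kaplanski invariants of $G$ and says nothing about those of $H$ (again, $H$ may be finite). In the cases where $H$ \emph{does} admit a \map\ topology, your delicate ``amalgamation'' is unnecessary: the paper's simple device from Case~2 of the proof of Theorem~\ref{Main:Conjecture} --- declare $H$ open in $G$ --- makes $G/H$ discrete, hence maximally almost periodic, and Lemma~\ref{very:easy:lemma} immediately gives $n(G,\tau^*)=H$. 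The genuinely hard case, which is the real content of Gabriyelyan's theorem and presumably the reason the paper cites \cite{G3} rather than reproving it, is precisely when $H$ admits \emph{no} \map\ topology (finite $H$, or $H$ with some finite leading Ulm--Kaplanski invariant). In that case no Hausdorff topology on $G$ can restrict to a \map\ topology on $H$, so your scheme cannot even begin; one must instead construct (as Gabriyelyan does, with $T$-sequence techniques applied to the copy of $\Z(k)^{(\omega)}$ inside $G$) a topology whose characters all vanish on $H$ without $H$ being \map\ as a subgroup. That idea is absent from your proposal, so the ``main obstacle'' you identify is not the actual obstacle.
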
 

Furthermore, 
Gabriyelyan 
resolved the following particular case of Problem \ref{Gen:Prob}.
\begin{theorem}
\cite[Theorem 1.3]{G3} 
\label{Gab-2}
Every
bounded subgroup of an unbounded abelian group $G$  is a \pvN \ of $G$.
\end{theorem}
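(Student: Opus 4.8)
The plan is to realize $H$ as $n(G,\tau)$ by writing $\tau$ as a supremum of two topologies: a precompact one, responsible both for the inclusion $n(G,\tau)\subseteq H$ and for maximal almost periodicity of the quotient $G/H$, and a Hausdorff \map\ topology supplied by Theorem \ref{CH:theorem}, responsible for Hausdorffness. First I would record the elementary reduction: for a Hausdorff group topology $\tau$ on $G$ one has $n(G,\tau)=H$ if and only if (i) every character of $(G,\tau)$ vanishes on $H$, and (ii) the quotient $G/H$, equipped with the quotient topology, is maximally almost periodic. This follows from the bijection between the characters of $G$ vanishing on $H$ and the characters of $G/H$, together with the identity $n(G,\tau)/H=n(G/H,\tau/H)$, which is valid once (i) holds.

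For the construction, note first that $G/H$ is unbounded: if $mG\subseteq H$ then $kmG=\{0\}$ for $k=\exp(H)$, contradicting unboundedness of $G$. Being a discrete abelian group, $G/H$ is maximally almost periodic, hence embeds as a subgroup $j\colon G/H\hookrightarrow K$ of a compact abelian group $K$. Writing $q\colon G\to G/H$ for the canonical map and $\beta=j\circ q$, the initial group topology $\sigma$ induced by $\beta$ is precompact with $\ker\beta=H$; thus its characters separate the points of $G/H$, so $n(G,\sigma)=H$ and the topology induced by $\sigma$ on $G/H$ is Hausdorff precompact, hence maximally almost periodic. Let $\mu$ be any Hausdorff \map\ group topology on $G$, which exists by Theorem \ref{CH:theorem}, and set $\tau=\sigma\vee\mu$. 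Then $\tau$ is Hausdorff (since $\tau\supseteq\mu$); every $\sigma$-continuous character is $\tau$-continuous, so $n(G,\tau)\subseteq n(G,\sigma)=H$; and the quotient topology of $\tau$ on $G/H$ refines that of $\sigma$, so condition (ii) persists. Thus $n(G,\tau)\subseteq H$ and (ii) hold automatically.

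The entire difficulty is therefore concentrated in condition (i): one must guarantee that no $\tau$-continuous character is nonzero on $H$. Restricting to $H$ and using that $\sigma|_H$ is indiscrete (as $H=\ker\beta$), the subspace topology is $\tau|_H=\mu|_H$, so every $\tau$-continuous character restricts to a $\mu|_H$-continuous character of $H$; the obstruction is that such a restriction could be a nontrivial character of $H$ that extends to a $\tau$-continuous character of $G$. Since $H$ is bounded it need not admit any \map\ topology, so one cannot simply demand that $\mu|_H$ be \map — this is the crux. The remedy is to choose $\mu$ so as to \emph{entangle} $H$ with the unbounded part of $G$: for every candidate character $\chi$ with $\chi(H)\neq 0$ one arranges a sequence lying in a fixed coset of $\ker\chi$ that converges to $0$ in $\tau$ while $\chi$ stays bounded away from $0$ on it, contradicting continuity of $\chi$. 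For the bounded part of the obstruction such coset-sequences are produced inside a bounded subgroup $B$ with $H\le B$ by appealing to Theorem \ref{corollary2} and Theorem \ref{new:corollary1}, which realize $H$ as a \pvN\ of a sufficiently large bounded group and supply the requisite \map\ behaviour, while the unbounded backbone is handled by Theorem \ref{CH:theorem}; in the simplest situations the two are amalgamated through a direct-sum splitting $G=A\oplus C$ with $C$ discrete and $n(A)=H$, using $n(A\oplus C)=n(A)\oplus n(C)$.

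I expect the main obstacle to be precisely the verification of (i) in the uncountable case: producing, uniformly over all potentially dangerous characters, the converging sequences (or nets) that kill them requires a careful transfinite construction of $\mu$ adapted to the algebraic structure of $G$ around $H$, rather than the off-the-shelf \map\ topology of Theorem \ref{CH:theorem}. In the countable case this is the familiar $T$-sequence technique underlying Theorem \ref{CH:theorem}; the general case is where the genuine work lies.
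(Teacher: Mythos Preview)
The paper does not prove this theorem; it is quoted verbatim from Gabriyelyan \cite[Theorem~1.3]{G3} and used as a black box in Case~1 of the proof of Theorem~\ref{Main:Conjecture}. There is thus no in-paper argument to compare your attempt against.

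Your proposal is an outline that correctly isolates the difficulty and then leaves it open. The reduction to (i) and (ii) is sound, and $\tau=\sigma\vee\mu$ does secure Hausdorffness together with $n(G,\tau)\subseteq H$. But condition (i)---that every $\tau$-continuous character kill $H$---is the entire content of the theorem, and nothing you write establishes it. A supremum of group topologies may admit continuous characters belonging to neither factor, so minimal almost periodicity of $\mu$ alone gives no control over characters of $\tau$ that do not come from $\sigma$. Your proposed remedies do not cover the ground: invoking Theorem~\ref{corollary2} requires a bounded overgroup $B\supseteq H$ inside $G$ containing $\Z(\exp H)^{(\omega)}$, which need not exist (take $G=\Z\oplus\Z(2)$ and $H=\{0\}\oplus\Z(2)$, where every bounded subgroup of $G$ is finite); for the same reason the splitting $G=A\oplus C$ with $H\le A$ and $n(A)=H$ is unavailable; and ``a careful transfinite construction of $\mu$ adapted to the algebraic structure of $G$ around $H$'' is a description of the problem, not a solution. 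Gabriyelyan's argument in \cite{G3} constructs the topology directly (via $T$-sequences in the countable case) so that the vanishing on $H$ is built in from the start---precisely the step you have postponed.
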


The following easy lemma 
is
helpful 
for
finding 
a 
necessary condition that all \pvN s must satisfy. 

\begin{lemma}\label{very:easy:lemma} The \vNk\  of a topological group $G$ is contained in every open subgroup of $G$ and contains every \map\ subgroup of $G$.  
\end{lemma}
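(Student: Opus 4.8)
The plan is to prove two containments separately, both of which follow essentially from the definition of the \vNk\ together with the fact that characters factor through quotients. Recall that $n(G)=\bigcap_{\chi}\ker\chi$, where $\chi$ ranges over all characters (continuous homomorphisms $G\to\T$) of the topological group $G$. I will treat the group as abelian throughout, as is implicit in the statement via the notion of character used in this paper.

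First I would show that $n(G)$ is contained in every open subgroup $U$ of $G$. The key observation is that if $U$ is an open subgroup, then $G/U$ is a discrete abelian group, and discrete abelian groups are maximally almost periodic: their characters separate points. Concretely, for any $g\notin U$ the coset $g+U$ is a nonzero element of $G/U$, and since $\T$ contains copies of every cyclic group and $\Q/\Z$ is divisible, there is a homomorphism $\psi:G/U\to\T$ with $\psi(g+U)\neq 0$. The composite $\chi=\psi\circ q$, where $q:G\to G/U$ is the (continuous, since $U$ is open) quotient map, is then a character of $G$ with $\chi(g)\neq 0$, so $g\notin n(G)$. Taking contrapositives, every element of $n(G)$ lies in $U$; hence $n(G)\subseteq U$.

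For the second containment, let $S$ be a \map\ subgroup of $G$, meaning $S$ carries the subspace topology induced from $G$ and every character of $S$ is trivial. I want $S\subseteq n(G)$, i.e.\ $\chi(s)=0$ for every character $\chi$ of $G$ and every $s\in S$. Given such a $\chi$, its restriction $\chi\restriction_{S}$ is a continuous homomorphism $S\to\T$, hence a character of the topological group $S$; since $S$ is \map, this restriction is trivial, so $\chi(s)=0$ for all $s\in S$. As $\chi$ was arbitrary, $s\in\bigcap_\chi\ker\chi=n(G)$, giving $S\subseteq n(G)$.

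The argument is genuinely elementary and I do not anticipate a serious obstacle; the only point requiring the smallest care is the first part, where one must supply the separating character on the discrete quotient $G/U$, using the injectivity (divisibility) of $\T$ in the category of abelian groups to extend a nontrivial character off a cyclic subgroup of $G/U$ to all of $G/U$. This is exactly the standard fact that $\T$ is an injective cogenerator for abelian groups, and it is what makes every discrete abelian group maximally almost periodic.
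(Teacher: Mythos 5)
Your proof is correct and follows essentially the same route as the paper: the first containment via the discrete (hence maximally almost periodic) quotient $G/U$, and the second via restricting characters of $G$ to the minimally almost periodic subgroup. The paper merely leaves the second part as ``obvious'' and does not spell out the injectivity of $\T$; your extra detail is fine but not a different argument.
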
 

\begin{proof}
If $H$ is an open subgroup of $G$, then $G/H$ is discrete, so it is maximally almost periodic. Since the characters of $G/H$ separate points of $G/H$, 
we get $n(G)\subseteq H$. The second assertion is obvious. 
\end{proof}

\begin{lemma}\label{lemma:necessary:condition} All \pvN s of an abelian group $G$ are contained in the intersection of all unconditionally closed subgroups of $G$ of finite index.
\end{lemma}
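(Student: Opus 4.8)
The plan is to prove the lemma by combining the two inclusions supplied by Lemma~\ref{very:easy:lemma} with the identification, already established in the excerpt, of unconditionally closed subgroups of an abelian group. Let $H$ be a potential \vNk\ of $G$, so that $H = n(G,\tau)$ for some Hausdorff group topology $\tau$ on $G$. Let $U$ be an arbitrary unconditionally closed subgroup of $G$ of finite index; I must show $H\subseteq U$. The strategy is to exhibit $U$ as an \emph{open} subgroup of $(G,\tau)$, because then Lemma~\ref{very:easy:lemma} gives $n(G,\tau)\subseteq U$ at once, and intersecting over all such $U$ yields the claim.

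First I would recall, exactly as in the proof of Proposition~\ref{reformulation:of:Markov:condition}, that by the structure results of \cite{DS-relection, DS_JA} every proper unconditionally closed subgroup of an abelian group has the form $G[m]$ for some $m>0$; and since $U$ is unconditionally closed of finite index, $|G/U| = |mG| < \omega$, so $mG$ is finite. By definition of \emph{unconditionally closed}, $U = G[m]$ is closed in every Hausdorff group topology on $G$, and in particular $U$ is $\tau$-closed. The remaining step is to upgrade ``closed of finite index'' to ``open'': since $U$ is $\tau$-closed and $|G/U|$ is finite, $U$ is the complement in $G$ of the finite union $\bigcup_{g\notin U}(g+U)$ of its (finitely many) nontrivial cosets, each of which is $\tau$-closed as a translate of the closed set $U$; hence $G\setminus U$ is $\tau$-closed and therefore $U$ is $\tau$-open.

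Once $U$ is seen to be a $\tau$-open subgroup of $G$, Lemma~\ref{very:easy:lemma} applied to $(G,\tau)$ gives $H = n(G,\tau)\subseteq U$. As $U$ was an arbitrary unconditionally closed subgroup of finite index, $H$ is contained in their intersection, which is what the lemma asserts.

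I expect the only genuine point requiring care to be the ``closed plus finite index implies open'' step; it is elementary but must be stated, since it is precisely what converts the Markov-theoretic hypothesis (unconditional closedness) into the topological hypothesis (openness) needed to invoke Lemma~\ref{very:easy:lemma}. Everything else is bookkeeping: the identification of $U$ with some $G[m]$ and the translation of finite index into finiteness of $|mG|$ are already recorded in the excerpt, so no new machinery is needed.
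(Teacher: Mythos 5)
Your proof is correct and takes essentially the same route as the paper's: unconditional closedness gives $\tau$-closedness, closed plus finite index gives $\tau$-open (the paper compresses this into the single assertion that the subgroup is $\tau$-clopen), and Lemma~\ref{very:easy:lemma} then yields the containment. Your detour through the structure result $U = G[m]$ is harmless but unnecessary, since the definition of unconditionally closed already gives $\tau$-closedness directly.
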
 

\begin{proof} 
Let $N$ be an unconditionally closed subgroup of finite index  of  $G$ and let $H$ be a \pvN. To see that 
$N \subseteq H$ equip $G$ with the group topology $\tau$ witnessing $H = n(G,\tau)$. Then $N$ is $\tau$-clopen,
so $H = n(G,\tau)\subseteq N$ by Lemma \ref{very:easy:lemma}. 
\end{proof}

The above lemma makes it important to compute the intersection of all unconditionally closed subgroups of finite index  of an abelian group $G$. 
To this end we need the following definition.

\begin{definition}\label{lemma2}\cite[Definition 4.3]{DS_JA}
Let $G$ be an abelian group.
\begin{itemize}
\item[(i)] If $G$ is bounded, then the {\em essential order} $eo(G)$ of $G$ is the smallest positive integer $n$ such that $nG$ is finite.
\item[(ii)] If $G$ is unbounded, we define $eo(G) = 0$.
\end{itemize}
\end{definition}

The notion of the essential order of a bounded abelian group $G$, as well as the notation $eo(G)$, are due to Givens and Kunen \cite{GK}, although the definition in \cite{GK} is different (but equivalent) to this one.

\begin{theorem}\label{Theorem:JA}\cite[Theorem 4.6]{DS_JA}
Let $G$ be an abelian group and $n = eo(G)$. Then:

(i) $G[n]$ is a $\Zar_G$-clopen subgroup of $G$,

(ii) $G[n]$ coincides with the connected component $c_\Zar(G)$ of $0$ in $(G,\Zar_G)$.
 \end{theorem}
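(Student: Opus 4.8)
The plan is to split along the definition of $eo(G)$ and, in each case, first establish clopenness in (i) and then derive (ii) from two standard topological facts: the connected component of a point is contained in every clopen set containing it, while any connected subset containing the point is contained in that component. Thus (i) together with a proof that $G[n]$ is connected will yield (ii).

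In the unbounded case $n=0$, so $G[n]=G[0]=G$, which is trivially $\Zar_G$-clopen, settling (i). Moreover, since $G$ is unbounded, $mG$ is infinite for every integer $m\ge 1$ (otherwise $G[m]$ would be a bounded subgroup of finite index, forcing $G$ itself to be bounded). Hence $G$ satisfies condition (c) of Proposition \ref{reformulation:of:Markov:condition}, so it is $\Zar$-connected, giving $c_\Zar(G)=G=G[n]$.

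In the bounded case $n=eo(G)\ge 1$ and $nG$ is finite by definition. For (i), $G[n]=\{x\in G:nx=0\}$ is an elementary algebraic set, hence $\Zar_G$-closed, and it has finite index since $G/G[n]\cong nG$. Each of its finitely many cosets has the form $g+G[n]=\{x\in G:nx=ng\}$, again elementary algebraic and therefore closed; so the complement of $G[n]$, being the union of the remaining cosets, is closed, and $G[n]$ is open. Thus $G[n]$ is clopen, which already yields $c_\Zar(G)\subseteq G[n]$.

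The reverse inclusion is the crux: I must show that $G[n]$ is connected in the topology that $\Zar_G$ induces on it. First I would verify that this subspace topology coincides with the intrinsic Zariski topology $\Zar_{G[n]}$: the trace $\{x\in G[n]:mx=a\}$ of any elementary algebraic set of $G$ is either empty or, fixing a witness $x_0\in G[n]$, equals $\{x\in G[n]:mx=mx_0\}$, an elementary algebraic set of $G[n]$, and conversely every such set of $G[n]$ arises as such a trace. Next, writing $G$ as in \eqref{eq:1} and setting $s_p=\max\{i:\alpha_{p,i}\ge\omega\}$, one checks that $eo(G)=\prod_{p}p^{s_p}$ and $G[n]\cong\bigoplus_{p}\bigoplus_{i}\Z(p^{\min(i,s_p)})^{(\alpha_{p,i})}$; the summand $i=s_p$ contributes $\Z(p^{s_p})^{(\alpha_{p,s_p})}$, so every leading Ulm--Kaplanski invariant of $G[n]$ is at least $\alpha_{p,s_p}\ge\omega$, hence infinite. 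By Proposition \ref{Kirku*}, $G[n]$ then satisfies the equivalent conditions of Proposition \ref{reformulation:of:Markov:condition}, in particular it is $\Zar$-connected; by the coincidence of topologies it is connected as a subspace of $(G,\Zar_G)$ and therefore contained in $c_\Zar(G)$, giving $G[n]=c_\Zar(G)$ (the degenerate case $G[n]=\{0\}$, i.e.\ $G$ finite, being trivial). The main obstacle is exactly this reverse inclusion, and within it the identification of the subspace topology with $\Zar_{G[n]}$: without that coincidence, $\Zar$-connectedness of the abstract group $G[n]$ would not transfer to connectedness of $G[n]$ as a subspace of $(G,\Zar_G)$, whereas the structural computation from \eqref{eq:1} is routine once available.
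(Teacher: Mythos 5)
Your reduction is circular at precisely the step you yourself call the crux. Everything else is fine: the case split on $eo(G)$, the clopenness of $G[n]$ (each coset $g+G[n]=\{x\in G:nx=ng\}$ is elementary algebraic, and there are only finitely many of them), the identification of the topology induced by $\Zar_G$ on $G[n]$ with $\Zar_{G[n]}$, and the computation showing that all leading Ulm--Kaplanski invariants of $G[n]$ are infinite are all correct. The problem is how you then conclude that $G[n]$ (or, in the unbounded case, $G$ itself) is $\Zar$-connected: you invoke the implication (c)$\Rightarrow$(a) of Proposition~\ref{reformulation:of:Markov:condition}, via Proposition~\ref{Kirku*}. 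But in this paper that implication is never proved: the proof of Proposition~\ref{reformulation:of:Markov:condition} establishes only (b)$\Leftrightarrow$(c) and justifies the equivalence with (a) by citing \cite[Theorem 4.6]{DS_JA}, which is precisely Theorem~\ref{Theorem:JA}, the statement you are proving; likewise the proof of Proposition~\ref{Kirku*} establishes only the equivalence of ``all leading Ulm--Kaplanski invariants infinite'' with condition (c). So the sole nontrivial content of the theorem --- that condition (c) forces Zariski connectedness --- is assumed rather than proved. (Note that the paper itself contains no proof either: Theorem~\ref{Theorem:JA} is quoted from \cite{DS_JA}, so the gap cannot be closed by appeal to anything established in this text.)

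To close the gap you must prove connectedness of a group $H$ satisfying (c) directly from the definition of $\Zar_H$. One workable route: the $\Zar_H$-closed sets are intersections of finite unions of elementary algebraic sets $\{x\in H:mx=a\}$, and every nonempty such set is a coset of $H[m]$; hence every \emph{proper} $\Zar_H$-closed set is contained in a proper finite union of cosets of subgroups $H[m]$ with $H[m]\neq H$, and by condition (c) each such $H[m]$ has infinite index, since $[H:H[m]]=|mH|$. If $H=A\sqcup B$ were a partition into nonempty clopen sets, then $A$ and $B$ would both be proper and closed, so $H$ would be covered by finitely many cosets of subgroups of infinite index, contradicting B.~H.~Neumann's covering lemma (a group covered by finitely many cosets is already covered by the cosets of those subgroups having finite index). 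With such an argument in hand, your structural computation --- that $G[n]$ satisfies (c) and is clopen --- does complete the proof; without it, the proposal establishes nothing beyond what the citation of \cite{DS_JA} already grants.
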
 

By the definition of $eo(G)$, the subgroup $G[n]$ has finite index, and this is the smallest subgroup of the form $G[m]$ that has finite index. 
So $c_\Zar(G) = G[n]$ coincides with the intersection of all unconditionally closed subgroups of $G$ of finite index. 
Therefore,
from  Lemma \ref{lemma:necessary:condition} and Theorem \ref{Theorem:JA} one obtains the following 
necessary condition for a subgroup to be a \pvN. 

\begin{corollary}\label{corollary3*} 
Every
\pvN \ $H$ of an abelian group $G$
satisfies $H\subseteq c_\Zar(G)$.
 \end{corollary}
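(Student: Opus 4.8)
The plan is to read this off directly from the two ingredients already assembled in the preceding text: the necessary condition of Lemma~\ref{lemma:necessary:condition}, which confines every \pvN\ to the intersection of the finite-index unconditionally closed subgroups, and Theorem~\ref{Theorem:JA}, which computes that intersection. No new construction is required; the entire content is to match up two descriptions of one and the same subgroup. In particular I would treat the bounded and unbounded cases uniformly, since the definition of $eo(G)$ assigns $n=eo(G)=0$ in the unbounded case and then $G[n]=G[0]=G$.

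Concretely, the first step is to apply Lemma~\ref{lemma:necessary:condition} to place $H$ inside the intersection $I$ of all unconditionally closed subgroups of $G$ of finite index. The remaining task is the equality $I=c_\Zar(G)$. Setting $n=eo(G)$, I would recall the Markov structure result (the same fact used in the proof of Proposition~\ref{reformulation:of:Markov:condition}, from \cite{DS-relection, DS_JA}) that every unconditionally closed subgroup of $G$ is of the form $G[m]$; such a subgroup has finite index exactly when $|G/G[m]|=|mG|$ is finite. Thus $I$ is the intersection of those $G[m]$ for which $mG$ is finite. By the definition of the essential order, $G[n]$ is itself one of these (as $nG$ is finite), and it is the smallest among them; this is precisely the minimality recorded in the paragraph following Theorem~\ref{Theorem:JA}. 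Hence the intersection collapses to $I=G[n]$, and Theorem~\ref{Theorem:JA}(ii) identifies $G[n]=c_\Zar(G)$. Combining, $H\subseteq I=c_\Zar(G)$, as required.

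The only point that rewards attention — and the closest thing to an obstacle — is the minimality assertion that $G[n]$ is contained in every finite-index subgroup $G[m]$, rather than merely being one of them; equivalently, that $G[n]$ is the intersection and not just a member. This is exactly the combined content of the definition of $eo(G)$ and Theorem~\ref{Theorem:JA}(i)--(ii), so it is available off the shelf and the corollary follows immediately. I would therefore expect the proof to be a two-line deduction, with all the genuine work already carried out in the results cited above.
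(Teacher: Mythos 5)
Your proposal is correct and follows essentially the same route as the paper: the paper likewise combines Lemma~\ref{lemma:necessary:condition} with Theorem~\ref{Theorem:JA}, observing that $G[n]$ (for $n=eo(G)$) is the smallest finite-index subgroup of the form $G[m]$, so that the intersection of all finite-index unconditionally closed subgroups collapses to $G[n]=c_\Zar(G)$. The minimality point you flag is exactly the one the paper records in the paragraph preceding the corollary, so nothing is missing.
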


 In the next lemma we collect several equivalent forms of the necessary condition $H\subseteq  c_\Zar(G)$ in the case when $G$ is bounded.

\begin{lemma}\label{lemma3} Let $G$ be a bounded abelian group and $m = eo(G)$. For every subgroup $H$ of $G$ TFAE: 

(a) $H\subseteq  c_\Zar(G) \ (=G[m]) $; 

(b) $exp(H) | m$; or equivalently, $mH = 0$;

(c) $G$ contains $\Z(k)^{(\omega)}$, where $k = exp(H)$. 
 \end{lemma}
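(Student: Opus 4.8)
The plan is to treat $(a)\Leftrightarrow(b)$ as essentially formal and to concentrate the real content in a single embedding criterion that yields $(b)\Leftrightarrow(c)$. For $(a)\Leftrightarrow(b)$ I would invoke Theorem \ref{Theorem:JA}, which identifies $c_\Zar(G)$ with $G[m]$. By definition, $H\subseteq G[m]$ means $mh=0$ for every $h\in H$, i.e. $mH=\{0\}$; and since $exp(H)$ is the least positive integer annihilating $H$, the condition $mH=\{0\}$ is equivalent to $exp(H)\mid m$. This disposes of $(a)\Leftrightarrow(b)$ with no computation.

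The core is $(b)\Leftrightarrow(c)$, which I would deduce from the following clean claim: for every positive integer $k$, the group $G$ contains a copy of $\Z(k)^{(\omega)}$ if and only if $k\mid eo(G)$. Granting this, condition $(c)$ with $k=exp(H)$ reads $exp(H)\mid eo(G)=m$, which is exactly $(b)$. To prove the claim I would first write $G$ in its canonical form \eqref{eq:1} and compute $eo(G)$ explicitly. Splitting $G$ into its $p$-components $G_p=\bigoplus_{i=1}^{m_p}\Z(p^i)^{(\alpha_{p,i})}$ and noting that $nG$ is finite exactly when every infinite-rank summand is annihilated by $n$, I would obtain $eo(G)=\prod_{p\in\pi(G)}p^{s_p}$, where $s_p=\max\{i:\alpha_{p,i}\ge\omega\}$ (and $s_p=0$ when no such $i$ exists). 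Thus $k\mid eo(G)$ is equivalent to the purely local conditions $v_p(k)\le s_p$ for every prime $p$, where $v_p$ denotes the $p$-adic valuation.

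It then remains to match these local conditions against the embeddability of $\Z(k)^{(\omega)}$. Since both $\Z(k)^{(\omega)}=\bigoplus_{p\mid k}\Z(p^{v_p(k)})^{(\omega)}$ and $G$ split as direct sums over primes, the embedding $\Z(k)^{(\omega)}\hookrightarrow G$ decomposes prime by prime, reducing matters to the single-prime statement: $\Z(p^j)^{(\omega)}\hookrightarrow G_p$ if and only if $s_p\ge j$. This is where I expect the only genuine work to lie. I would control it through the functorial invariant $(p^{j-1}G_p)[p]$, the subgroup of order-$\le p$ elements that are divisible by $p^{j-1}$. A cyclic summand $\Z(p^i)$ contributes a copy of $\Z(p)$ to this subgroup exactly when $i\ge j$ and contributes nothing otherwise, so $(p^{j-1}G_p)[p]$ is elementary abelian of rank $\sum_{i\ge j}\alpha_{p,i}$; as this finite sum of cardinals is infinite precisely when some $\alpha_{p,i}$ with $i\ge j$ is infinite, its rank is infinite if and only if $s_p\ge j$. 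Any embedding of $\Z(p^j)^{(\omega)}$ restricts to an injection on this invariant and therefore forces its rank to be at least $\omega$, giving $s_p\ge j$; conversely $s_p\ge j$ produces infinitely many cyclic summands of order $\ge p^j$, each containing a copy of $\Z(p^j)$, so that $\Z(p^j)^{(\omega)}\hookrightarrow G_p$. Assembling the local equivalences over all $p\mid k$ yields the claim, and hence $(b)\Leftrightarrow(c)$. The main obstacle is precisely the bookkeeping in this socle computation; every remaining step is formal.
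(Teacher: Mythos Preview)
Your proof is correct. For $(a)\Leftrightarrow(b)$ you do exactly what the paper does (it simply declares the equivalence ``obvious''). For $(b)\Leftrightarrow(c)$ the paper's entire proof is a one-line citation of \cite[Proposition~4.12]{DS_JA}, whereas you supply a self-contained argument: you compute $eo(G)=\prod_p p^{s_p}$ with $s_p=\max\{i:\alpha_{p,i}\ge\omega\}$, reduce the embedding question prime by prime, and settle the local case via the functorial invariant $(p^{j-1}G_p)[p]$. This is presumably the content behind the cited proposition, so the two proofs are not really different in spirit; your version simply unpacks what the paper outsources. The gain is independence from the external reference; the cost is a paragraph of Ulm--Kaplansky bookkeeping that the paper avoids.
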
 

\begin{proof}
(a) and (b) are obviously equivalent. By \cite[Proposition 4.12]{DS_JA}, (b) is equivalent to (c). 
\end{proof}

Our main theorem in this section shows that this necessary condition from Corollary \ref{corollary3*} is also sufficient for the realization of the \vNk.
 
\begin{theorem}\label{Main:Conjecture} A subgroup $H$ 
of  an abelian group  $G$ is a \pvN \ of $G$ if and only if $H\subseteq  c_\Zar(G)$. 
 \end{theorem}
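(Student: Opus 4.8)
The forward implication is already established: Corollary \ref{corollary3*} shows that every \pvN\ $H$ satisfies $H \subseteq c_\Zar(G)$. So the plan is to prove the converse, i.e.\ to realize every subgroup $H \subseteq c_\Zar(G)$ as a \vNk\ of some Hausdorff group topology on $G$. I would split the argument according to whether $G$ is bounded or unbounded, mirroring the structure of the solution to the MinAP problem (Theorems \ref{new:corollary1} and \ref{CH:theorem}).

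First I would dispose of the bounded case. When $G$ is bounded, Lemma \ref{lemma3} translates the hypothesis $H \subseteq c_\Zar(G)$ into the statement that $G$ contains a copy of $\Z(k)^{(\omega)}$, where $k = \exp(H)$. But this is precisely the condition appearing in Gabriyelyan's Theorem \ref{corollary2}, which characterizes the \pvN s of a bounded abelian group. Hence in the bounded case the theorem is simply a reformulation of Theorem \ref{corollary2} via Lemma \ref{lemma3}, and nothing new needs to be proved.

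The substantive work is the unbounded case, and this is where I expect the main obstacle. Here $eo(G) = 0$, so $c_\Zar(G) = G[0] = G$ by Theorem \ref{Theorem:JA}, and the necessary condition $H \subseteq c_\Zar(G)$ is vacuous: \emph{every} subgroup $H$ must be realizable. The plan is to construct, for an arbitrary subgroup $H$ of an unbounded $G$, a Hausdorff group topology $\tau$ with $n(G,\tau) = H$. The natural strategy is to build $\tau$ so that $H$ carries a \map\ topology while $G/H$ is given a maximally almost periodic (e.g.\ precompact Hausdorff) topology, and then to amalgamate the two. For the inclusion $H \subseteq n(G,\tau)$ one wants $H$ itself to be \map\ in the induced topology, so that every character of $G$ restricts trivially to $H$ by Lemma \ref{very:easy:lemma}; for the reverse inclusion $n(G,\tau) \subseteq H$ one wants enough characters of $G/H$ to separate its points, forcing any $x \notin H$ to be detected. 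The delicate point is that $H$ need not be unbounded, so one cannot directly invoke Theorem \ref{CH:theorem} to get a \map\ topology on $H$; instead one must exploit the unboundedness of the ambient $G$. This suggests choosing an unbounded subgroup $K \supseteq H$ with $H$ realizable as $n(K)$ inside $K$ — for instance using Theorem \ref{Gab-2}, which realizes any \emph{bounded} subgroup as a \vNk, together with Theorem \ref{CH:theorem} to handle an unbounded part — and then extending the topology from $K$ to $G$ while keeping the complementary directions maximally almost periodic.

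The hardest step will be controlling the \vNk\ after amalgamating these topologies: one must verify that combining a \map\ piece on $H$ with a MAP piece on a complement does not accidentally enlarge or shrink the kernel, i.e.\ that the characters of the glued topology are exactly those vanishing on $H$. I would handle this by decomposing $G$ compatibly with $H$ — writing $G$ as an extension of $G/H$ by $H$ and using that $G/H$ admits a maximally almost periodic (precompact) group topology whose characters separate points — and then taking $\tau$ to be the supremum (join) of the pulled-back precompact topology from $G/H$ and the realizing topology on $H$. The inclusion $n(G,\tau) \subseteq H$ follows because the precompact factor already separates the points of $G/H$, while $H \subseteq n(G,\tau)$ follows because $H$ is \map\ in $\tau$. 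Verifying Hausdorffness of the join and checking that no unexpected characters survive is the crux, and it is exactly the argument the authors have presumably reduced to the already-proven Theorems \ref{CH:theorem}, \ref{corollary2} and \ref{Gab-2}.
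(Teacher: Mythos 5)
Your overall architecture coincides with the paper's: necessity from Corollary \ref{corollary3*}; the bounded-$G$ case via Lemma \ref{lemma3} combined with Theorem \ref{corollary2}; bounded $H$ inside unbounded $G$ via Theorem \ref{Gab-2}; and unbounded $H$ via a \map\ topology on $H$ supplied by Theorem \ref{CH:theorem}, extended to $G$. (The paper splits on whether $H$ is bounded rather than on whether $G$ is, but the resulting three subcases are the same.) One small inefficiency: your detour through an intermediate unbounded subgroup $K\supseteq H$ is unnecessary, since when $H$ is bounded and $G$ is unbounded, Theorem \ref{Gab-2} applies to $G$ itself and closes that case in one line; insisting on $K\subsetneq G$ would only recreate the same extension problem discussed next.

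The genuine gap sits at what you yourself call the crux: the amalgamation. As written, ``the supremum (join) of the pulled-back precompact topology from $G/H$ and the realizing topology on $H$'' is not a well-defined object, because the realizing topology lives on the subgroup $H$, not on $G$; before one can form a join of group topologies on $G$, one must say how a topology on $H$ is promoted to one on $G$, and that promotion is exactly the missing device. The paper's device is simple: declare $H$ open, i.e., take as a base of $\tau^*$ all translates $g+U$, where $g\in G$ and $U$ is a non-empty $\tau$-open subset of $H$. Hausdorffness is then immediate, and Lemma \ref{very:easy:lemma} yields both inclusions at once: $n(G,\tau^*)\subseteq H$ because $H$ is $\tau^*$-open (the quotient $G/H$ is discrete, hence maximally almost periodic, so no precompact topology on $G/H$ is needed at all), and $H\subseteq n(G,\tau^*)$ because $(H,\tau)$ is \map. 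Note, moreover, that once $H$ is declared open your join adds nothing: every neighbourhood of $0$ pulled back along the canonical map $q:G\to G/H$ contains $q^{-1}(0)=H$, hence contains every basic $\tau^*$-neighbourhood $U\subseteq H$ of $0$, so the join of the pullback topology with $\tau^*$ is $\tau^*$ again. Your plan can therefore be completed, but the step you deferred (``exactly the argument the authors have presumably reduced to the already-proven theorems'') is not a reduction to the cited theorems; it is a separate, if short, construction, and it constitutes the actual content of the paper's unbounded-$H$ case.
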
 
\begin{proof}
The necessity was proved in Corollary \ref{corollary3*}. 
To prove the sufficiency, assume that $H\subseteq  c_\Zar(G)$ and consider two cases.

\smallskip
{\em Case 1\/}. {\sl $H$ is bounded.\/}
If $G$ is unbounded, 
then 
$H$ is a potential \pvN\
by Theorem \ref{Gab-2}.

Suppose now that $G$ itself is bounded.
Since $H\subseteq c_\Zar(G)$ by our assumption, the implication (a)$\to$(c) of
Lemma \ref{lemma3}
allows us to conclude that 
$G$ contains $\Z(k)^{(\omega)}$,
where $k=exp(H)$.
Now $H$ is a \pvN\ of $G$ by
Theorem \ref{corollary2}. 

\smallskip
{\em Case 2\/}. {\sl $H$ is not bounded.\/}
In this case, we apply Theorem \ref{CH:theorem} to find 
a \map \ group topology $\tau$ on $H$. Extend $\tau$ to a Hausdorff group topology
$\tau^*$ on $G$ by taking as a base of $\tau^*$ all translates $g+U$, where $g\in G$ and $U$ is a non-empty $\tau$-open subset of $H$. Since $H$ is $\tau^*$-open and $(H,\tau)$ is \map, 
Lemma \ref{very:easy:lemma} implies $H = n(G,\tau^*)$. 
Therefore, $H$ is a \pvN\ of $G$.
\end{proof}

Theorem \ref{Main:Conjecture} allows us to answer also the following 
questions of Gabriyelyan that were open.
\begin{itemize}
\item[(a)]  \cite[Question 4.2]{G3} If $H$ is a \pvN\ of an abelian group $G$, are then all subgroups of $H$ still \pvN s of $G$? In particular, if an unbounded 
abelian group $G$ admits a \map\ group topology, is then every subgroup of $G$ a \pvN?
\item[(b)] \cite[Question 4.3]{G3} (also \cite[Problem 4]{G0}) 
Describe all infinite abelian groups $G$ such that every subgroup of $G$ is a \pvN. 
\end{itemize}

Indeed, the affirmative answer to (a) immediately follows from Theorem \ref{Main:Conjecture}. This implies also an immediate answer of (b), 
these are precisely the groups that admit a \map\ group topology. 

The paper is organized as follows. In Section \ref{HM:construction} we recall some of the properties of the The \HM\ functorial construction of a pathwise connected and locally pathwise connected group $\HMi(G)$ depending on an arbitrary topological (abelian) group $G$. 
In Section \ref{Sec:6} we collect necessary background on \map\ groups.
In  Section \ref{proofs0} we 
provide
the proof of Theorem \ref{new:corollary1}. 
In Section \ref{extension:section} we consider extension of monomorphisms into $\HMi(\T)$.
 In Section \ref{Dense:emd:HM(T)} we show that certain unbounded abelian groups (the Pr\"ufer groups and infinite direct sums of cyclic groups) admit a dense embedding in the group $\HMi(\T)$. 
Using these embeddings and the fact that the group $\HMi(\T)$ is minimal almost periodic (Corollary \ref{HM(T)isMinAP}), we 
resolve the countable case of Protasov-Comfort's problem in 
Section \ref{Sec:10}  and the general case 
 in  Section \ref{proofs}.

\section{The \HM\ construction}
\label{HM:construction}
\label{Sec:5}

Let $G$ be a group, and let $I$ be the unit interval $[0,1]$. As usual, $G^I$ denotes the set of all functions from $I$ to $G$. Clearly, $G^I$  is a group under coordinate-wise operations. For $g\in G$ and $t\in (0,1]$ let $g_t\in G^I$ be the function defined by
$$
g_t(x)=
\left\{
\begin{array}{rl}
g & \mbox{if } x< t \\
e & \mbox{if } x\ge  t,
    \end{array}
\right.
$$
where $e$ is the identity element of $G$. Note that $G_t=\{g_t:g\in G\}$ is a subgroup of $G^I$ isomorphic to $G$ for each $t\in (0,1]$. Therefore, $\HMi(G)=\sum_{t\in (0,1]} G_t$ is a subgroup of $G^I$. It is straightforward to check that this sum is direct, so that
\begin{equation}
\label{direct:decomposition:of:HM(G)}
\HMi(G)=\bigoplus_{t\in (0,1]} G_t.
\end{equation}

When $G$ is a topological group, Hartman and Mycielski \cite{HM} equip $\HMi(G)$ with a topology making it pathwise connected and locally pathwise connected. Let $\mu$ be the standard probability measure on $I$. The {\em \HM\ topology\/} on the group $\HMi(G)$ is the topology generated by taking the family of all sets of the form
\begin{equation}
\label{basic:nghb:in:HM}
O(U,\varepsilon)=\{g\in G^I: \mu(\{t\in I: g(t)\not\in U\})<\varepsilon,
\end{equation}
where $U$ is an open neighbourhood $U$ of the identity $e$ in $G$ and $\varepsilon>0$, as the base at the identity function of $\HMi(G)$.

The next lemma lists three properties of the functor $G\mapsto \HMi(G)$ that are needed for our proofs.

\begin{lemma}
\label{Dima}
\begin{itemize}
\item[(i)] For every group $G$, the group $\HMi(G)$ is pathwise connected and locally pathwise connected.{HM}
\item[(ii)] If the group $G$ is divisible and abelian, then so is $\HMi(G)$.
\item[(iii)] If the group $G$ is first countable, then so is $\HMi(G)$. 
\end{itemize}
\end{lemma}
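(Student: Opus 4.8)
The plan is to verify each of the three properties of the \HM\ functor separately, treating (i) and (iii) as the routine topological facts they are and concentrating the real work on (ii), which is the one assertion requiring an algebraic argument about the structure of $\HMi(G)=\bigoplus_{t\in(0,1]} G_t$.

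For item (i), I would observe that pathwise connectedness is built into the \HM\ construction: given any $g\in\HMi(G)$ one connects it to the identity by a path $s\mapsto g^{(s)}$ obtained by ``sliding the jump discontinuities,'' i.e.\ replacing each generator $g_t$ appearing in $g$ with $g_{st}$ (or by contracting the support of $g$ as a function on $I$). The key point is that as the measure of the support shrinks to $0$, the resulting element eventually lies inside every basic neighbourhood $O(U,\varepsilon)$ from \eqref{basic:nghb:in:HM}, so the path is continuous and terminates at $e$. Local pathwise connectedness follows because each basic neighbourhood $O(U,\varepsilon)$ is itself pathwise connected by the same sliding argument performed inside $O(U,\varepsilon)$. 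I would cite \cite{HM} for the precise continuity verification rather than reproduce it.

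For item (ii), assume $G$ is divisible and abelian. Abelianity of $\HMi(G)$ is immediate since it is a subgroup of $G^I$ with coordinatewise operations and $G$ is abelian. For divisibility, fix $h\in\HMi(G)$ and $n\ge 1$; I must produce $x\in\HMi(G)$ with $nx=h$. Using the direct decomposition \eqref{direct:decomposition:of:HM(G)}, write $h=\sum_{j=1}^k (g_j)_{t_j}$ as a finite sum of generators with distinct $t_j\in(0,1]$. Because $G_t\cong G$ is divisible for each $t$, for each $j$ choose $y_j\in G$ with $ny_j=g_j$ in $G$; then $(y_j)_{t_j}$ satisfies $n\,(y_j)_{t_j}=(ny_j)_{t_j}=(g_j)_{t_j}$, using that $r\mapsto r_t$ is a group homomorphism $G\to G_t$. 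Summing, $x=\sum_{j=1}^k (y_j)_{t_j}$ gives $nx=h$, so $\HMi(G)$ is divisible.

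For item (iii), if $G$ is first countable, fix a countable base $\{U_m:m\in\N\}$ of open neighbourhoods of $e$ in $G$. I claim the countable family $\{O(U_m,1/n):m,n\in\N\}$ is a neighbourhood base at the identity of $\HMi(G)$; by homogeneity this yields first countability everywhere. The verification is routine: given an arbitrary basic neighbourhood $O(U,\varepsilon)$, choose $n$ with $1/n<\varepsilon$ and $m$ with $U_m\subseteq U$, and note $O(U_m,1/n)\subseteq O(U,\varepsilon)$ directly from the definition \eqref{basic:nghb:in:HM}. The main obstacle in the whole lemma is genuinely item (i): unlike (ii) and (iii), its proof rests on a careful measure-theoretic continuity estimate for the contraction homotopy, and this is precisely the content for which the reference \cite{HM} is the appropriate authority.
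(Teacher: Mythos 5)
Your proposal is correct and takes essentially the same route as the paper: item (i) is delegated to \cite{HM}, item (ii) rests on the direct-sum decomposition \eqref{direct:decomposition:of:HM(G)} (the paper phrases this as the abstract isomorphism $\HMi(G)\cong G^{(\cont)}$ together with preservation of divisibility and commutativity under direct sums, while you verify divisibility element by element, which amounts to the same argument), and item (iii) is exactly the countable base $O(U_m,1/n)$ at the identity that the paper leaves implicit. No gaps.
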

\begin{proof}
(i) is proved in \cite{HM}; see also 
\cite{DS_ConnectedMarkov}.

(ii) Since $G_t\cong G$ for each $t\in (0,1]$, from \eqref{direct:decomposition:of:HM(G)} we conclude that  $\HMi(G)\cong G^{(\cont)}$. It follows from this that $\HMi(G)$ is divisible and abelian whenever $G$ is. 

(iii) If $G$ is first countable, then the definition of the topology of $\HMi(G)$ easily implies that $\HMi(G)$ is first countable as well.
\end{proof}

In order to investigate the fine structure of the topological group $\HMi(G)$, we need an additional notation. Let $D$ be a subgroup of $G$. For each $t\in(0,1]$,  $D_t=\{d_t:d\in D\}$ is a subgroup of $G_t$. Recalling \eqref{direct:decomposition:of:HM(G)},  we conclude that, for each non-empty subset $S$ of $(0,1]$, the sum
\begin{equation}
\label{decomposition:H(D,S)}
H(D,S)=\bigoplus_{t\in S} D_t
\end{equation}
is direct. Note that $\HMi(G)=H(G,(0,1])$.

\begin{lemma}\label{HM:groups} 
Let $G$ be a topological group.
\begin{itemize}
\item[(i)] If $D$ is a subgroup of $G$ and $S$ is a non-empty subset of $(0,1]$, then the subgroup $H(D,S)$ is
algebraically
 isomorphic to $D^{(S)}$; in particular, if $S$ is countable, then $H(D,S)\cong D^{(\omega)}$.

\item[(ii)] If $D$ is a dense subgroup of $G$ and $S$ is a dense subset of $(0,1]$, then the subgroup $H(D,S)$ of $\HMi(G)$ is dense in $\HMi(G)$.

\item[(iii)] If $D$ is a dense subgroup of $G$, then $\HMi(D)$ is a dense subgroup of $\HMi(G)$. 
\end{itemize}
\end{lemma}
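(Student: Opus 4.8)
The plan is to obtain (i) and (iii) as quick consequences of the direct decomposition and of (ii) respectively, and to concentrate the real work on the density statement (ii).

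For (i), recall from \eqref{decomposition:H(D,S)} that $H(D,S)=\bigoplus_{t\in S}D_t$ is a direct sum, so it suffices to observe that each $D_t$ is isomorphic to $D$ via $d\mapsto d_t$. This map is a homomorphism because $(dd')_t=d_t d_t'$ holds coordinatewise, and it is injective since $d_t(x)=d$ for every $x<t$ and $t>0$; hence $H(D,S)\cong\bigoplus_{t\in S}D=D^{(S)}$, and when $S$ is countably infinite this is $D^{(\omega)}$. For (iii), note that $\HMi(D)=H(D,(0,1])$ and that $(0,1]$ is trivially dense in itself, so (iii) is precisely the instance $S=(0,1]$ of (ii); thus it remains only to prove (ii).

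To prove (ii), fix $f\in\HMi(G)$ and a basic neighbourhood $O(U,\varepsilon)$ of the identity, and aim to produce $h\in H(D,S)$ with $f^{-1}h\in O(U,\varepsilon)$, i.e. $h\in f\cdot O(U,\varepsilon)$. By \eqref{direct:decomposition:of:HM(G)} we may write $f=\prod_{j=1}^{n}(g^{(j)})_{t_j}$ with $g^{(j)}\in G$ and $0<t_1<\dots<t_n\le 1$; evaluated coordinatewise this makes $f$ a step function whose value at $x$ is the tail product $\prod_{i:\,t_i>x}g^{(i)}$. The idea is to build $h$ as a step function of the same shape but with its jumps relocated to points of $S$ and its values replaced by nearby points of $D$: choose $s_1<\dots<s_n$ in $S$ with each $s_j$ close to $t_j$ (possible since $S$ is dense in $(0,1]$), choose $d^{(j)}\in D$ close to $g^{(j)}$ (possible since $D$ is dense in $G$), and set $h=\prod_{j=1}^{n}(d^{(j)})_{s_j}\in H(D,S)$.

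For $x\notin\bigcup_{j}[\min(t_j,s_j),\max(t_j,s_j))$ one has, for every $j$, that $t_j>x$ holds iff $s_j>x$ holds, so $\{i:t_i>x\}=\{i:s_i>x\}$ is one common tail $\{j+1,\dots,n\}$; hence on such $x$ the value $f(x)^{-1}h(x)$ equals $(g^{(j+1)}\cdots g^{(n)})^{-1}(d^{(j+1)}\cdots d^{(n)})$. Since each of these finitely many expressions depends continuously on the $d^{(i)}$ and equals $e\in U$ when $d^{(i)}=g^{(i)}$, one can pick the $d^{(i)}\in D$ so that all of them lie in $U$ simultaneously; this forces $\{t:f(t)^{-1}h(t)\notin U\}$ to be contained in $\bigcup_{j}[\min(t_j,s_j),\max(t_j,s_j))$, whose measure is at most $\sum_j|t_j-s_j|$. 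Choosing each $s_j$ within $\varepsilon/n$ of $t_j$ then gives $\mu(\{t:f(t)^{-1}h(t)\notin U\})<\varepsilon$, i.e. $h\in f\cdot O(U,\varepsilon)$, proving density. The main obstacle is exactly this simultaneous bookkeeping in the (possibly non-abelian) setting: one must relocate the jumps into $S$ and perturb the values into $D$ at once while keeping every tail product inside $U$ and the total displacement of the jump points below $\varepsilon$. Once the step-function description of elements of $\HMi(G)$ and the tail-product identity on the good set are in hand, the two density hypotheses supply precisely the freedom needed.
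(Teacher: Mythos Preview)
Your proof is correct and follows essentially the same approach as the paper: items (i) and (iii) are handled identically, and for (ii) both you and the paper approximate each jump point $t_j$ by some $s_j\in S$ and each value $g^{(j)}$ by some $d^{(j)}\in D$. The only organisational difference is that the paper invokes continuity of the $n$-fold product in $\HMi(G)$ up front, choosing neighbourhoods $V_i=x_i\cdot O(W_i,\varepsilon_i)$ with $V_1\cdots V_n\subseteq U$ and then approximating each factor $(g_i)_{t_i}$ by $(d_i)_{s_i}\in V_i$ separately; you instead compute the step-function values explicitly and control the finitely many tail products $(g^{(j+1)}\cdots g^{(n)})^{-1}(d^{(j+1)}\cdots d^{(n)})$ by hand. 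The paper's route is slightly slicker (no tail-product bookkeeping), while yours makes the non-abelian case more transparently visible; the underlying idea is the same.
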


\begin{proof}  (i) Since $D_t\cong D$ for each $t\in (0,1]$, from \eqref{decomposition:H(D,S)} we conclude that the subgroup $H(D,S)$ of $\HMi(G)$ is isomorphic to 
$D^{(S)}$.

To prove (ii), 
take an arbitrary element $x$ of $\HMi(G)$ and its open neighbourhood $U$. Fix a  canonical representation $x=\sum_{i=1}^n x_i$, where $x_i =(g_i)_{t_i} \in G_{t_i}$ for some $g_i\in G$ and $t_i \in [0,1]$. 

First, we use continuity of the group operation to choose open neighborhoods $V_i$ of each $x_i$ of the form $V_i= x_i + O(W_i,\varepsilon_i)$
such that $V_1 +\ldots + V_n\subseteq  U$.  

Second, we use density of $S$ in $[0,1]$ to pick $s_i\in (t_i-\varepsilon_i,t_i+\varepsilon_i)\cap S$ for all $i=1,\dots,n$. 

Third, we use density of $D$ in $G$ to find  $d_i\in (g_i+W_i)\cap D$ for all $i=1,\dots,n$.

Finally, we define $h_i:=(d_i)_{s_i}\in D_{s_i}$ for all $i=1,\dots,n$. 
Since $s_1,s_2,\dots,s_n\in S$, it follows from \eqref{decomposition:H(D,S)} that 
$h=\sum_{i=1}^n h_i\in H(D,S)$. Since
$h_i - x_i \in  O(W_i,\varepsilon_i)$, we have $h_i\in V_i$ ($i=1,\dots,n$). Thus, 
$h\in \sum_{i=1}^n V_i\subseteq U$.
Therefore,  $h\in H(D,S)\cap U\not=\emptyset$.

Since $\HMi(D)=H(D,(0,1])$, (iii) follows from item (ii), in which one takes $S=(0,1]$.
\end{proof}

\begin{corollary}\label{corollary:Moscow2}
If $D$ is a dense subgroup of a topological group $G$, the group $\HMi(G)$ contains a dense subgroup isomorphic to $D^{(\omega)}$. In particular,  
\begin{itemize}
\item[(a)] $\HMi(G)$ is separable, whenever $G$ is separable. 
\item[(b)] $\HMi(G)$ is second countable, whenever $G$ is second countable. 
\end{itemize}
\end{corollary}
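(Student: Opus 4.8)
The goal is Corollary~\ref{corollary:Moscow2}: if $D$ is a dense subgroup of a topological group $G$, then $\HMi(G)$ contains a dense subgroup isomorphic to $D^{(\omega)}$, and the two consequences (a) and (b) about separability and second countability.

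\textbf{Main claim.} The plan is to extract the dense copy of $D^{(\omega)}$ directly from Lemma~\ref{HM:groups}. I would choose $S$ to be a \emph{countable} dense subset of $(0,1]$ — for instance $S = \Q\cap(0,1]$ — and consider the subgroup $H(D,S)$. By part (i) of Lemma~\ref{HM:groups}, since $S$ is countable we have $H(D,S)\cong D^{(S)}\cong D^{(\omega)}$. By part (ii) of the same lemma, since $D$ is dense in $G$ and $S$ is dense in $(0,1]$, the subgroup $H(D,S)$ is dense in $\HMi(G)$. Composing these two facts gives exactly a dense subgroup of $\HMi(G)$ isomorphic to $D^{(\omega)}$, which is the main assertion. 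There is essentially no obstacle here: the corollary is a packaging of Lemma~\ref{HM:groups}(i)--(ii) with the single observation that a countable dense $S\subseteq(0,1]$ exists.

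\textbf{Consequence (a).} For separability, suppose $G$ is separable, so it admits a countable dense subgroup — more precisely, take a countable dense \emph{subset} and let $D$ be the subgroup it generates, which is then a countable dense subgroup of $G$. Then $D^{(\omega)}$ is countable (a countable direct sum of a countable group), so by the main assertion $\HMi(G)$ contains a \emph{countable} dense subgroup, whence $\HMi(G)$ is separable. The only mild care is to pass from a dense subset to a dense subgroup of the same cardinality, which is routine.

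\textbf{Consequence (b).} For second countability, I would argue that if $G$ is second countable then $G$ is both separable and first countable. Separability gives, by part (a), that $\HMi(G)$ is separable; first countability gives, by Lemma~\ref{Dima}(iii), that $\HMi(G)$ is first countable. A first-countable separable topological group is second countable (a separable metrizable-type argument, or directly: a countable dense set together with a countable base at the identity yields a countable base via translations). Hence $\HMi(G)$ is second countable. The subtle point worth stating explicitly is the implication ``first countable $+$ separable $\Rightarrow$ second countable'' for topological groups, which I would justify by noting that the countably many translates, by points of a countable dense set, of a countable neighbourhood base at the identity form a countable base for the whole topology.
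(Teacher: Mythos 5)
Your proposal is correct and takes essentially the same route as the paper: a countable dense $S\subseteq(0,1]$ together with Lemma~\ref{HM:groups}(i)--(ii) yields the dense copy of $D^{(\omega)}$ (indeed, you use $H(D,S)$ where the paper's proof, slightly sloppily, writes $H(G,S)$), and (a), (b) are derived exactly as in the paper. The only cosmetic difference is the last step of (b): the paper invokes metrizability of the first-countable topological group $\HMi(G)$ (Birkhoff--Kakutani) and then uses that separable metrizable spaces are second countable, whereas you argue directly that translates of a countable base at the identity by a countable dense set form a countable base; both justifications are valid.
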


\begin{proof} Let $S$ be an arbitrary countable dense subset of $(0,1]$. It follows from item (i) of Lemma \ref{HM:groups}
that $H(G,S)$ is isomorphic to  $G^{(\omega)}$. Finally, the subgroup $H(G,S)$ of $\HMi(G)$ is dense in $\HMi(G)$ by item (ii) of Lemma \ref{HM:groups}.

(a) follows directly from the first assertion of the corollary. 

(b) If $G$ is second countable, then $G$ is first countable and separable. By Lemma \ref{Dima}(iii), $\HMi(G)$ is first countable. Being a topological group,  it is metrizable. On the other hand, (a) implies that $G$ is separable. Therefore, $\HMi(G)$ is second countable as well. 
\end{proof}

\begin{corollary}
\label{HM(T):has:countable:base}
$ \HMi(\T)$ has a countable base.
\end{corollary}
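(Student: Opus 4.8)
The plan is to deduce this corollary directly from item (b) of Corollary \ref{corollary:Moscow2}, which guarantees that $\HMi(G)$ is second countable whenever the topological group $G$ is second countable. Thus the entire task reduces to verifying that the circle group $\T$ itself has a countable base.

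First I would recall that $\T = \R/\Z$ carries the quotient topology of the usual topology on $\R$. As a quotient of the separable metric space $\R$ by the (compact) subgroup $\Z$, the group $\T$ is a compact metrizable topological group; equivalently, it is a one-dimensional compact Lie group. In either description it is immediate that $\T$ possesses a countable base: for instance, the images under the quotient map $\R \to \T$ of the open intervals with rational endpoints form a countable base of $\T$. In particular $\T$ is both first countable and separable, which is precisely the second countability hypothesis demanded by Corollary \ref{corollary:Moscow2}(b).

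Having established that $\T$ is second countable, I would simply apply Corollary \ref{corollary:Moscow2}(b) with $G = \T$ to conclude that $\HMi(\T)$ is second countable, that is, has a countable base, which is exactly the assertion. There is no obstacle of any substance here: the single point requiring a remark, namely the second countability of $\T$, is entirely standard, and everything else is an immediate specialization of the preceding corollary.
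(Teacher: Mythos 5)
Your proof is correct and follows exactly the route the paper intends: Corollary \ref{HM(T):has:countable:base} is stated without proof precisely because it is the immediate specialization of Corollary \ref{corollary:Moscow2}(b) to the (obviously second countable) group $G=\T$. Your only added content, the standard verification that $\T$ is second countable, is harmless and fills in the one step the paper leaves implicit.
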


\section{Background on \map\ groups}
\label{Sec:6}

In the next lemma we collect some permanence properties of the minimally almost periodic groups. 

\begin{lemma} \label{easy:lemma}
The property of being minimally almost periodic is preserved by taking direct products, direct sums, dense subgroups and completions. In particular,  if each  $G_i$ admits a minimally almost periodic group topology, then also $\bigoplus_{i\in I} G_i$ and $\prod_{i\in I} G_i$ have the same property.
\end{lemma}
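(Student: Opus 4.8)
The plan is to reduce every one of the four closure properties to a single observation about the behaviour of continuous characters under restriction and extension, together with the density of the direct sum inside the direct product. First I would prove the two-way statement that, for a dense subgroup $D$ of a topological abelian group $G$, the group $G$ is \map\ if and only if $D$ is \map. For the implication that $G$ being \map\ forces $D$ to be \map, given a continuous character $\chi\colon D\to\T$, I would use that homomorphisms of topological groups are uniformly continuous and that $\T$ is complete to extend $\chi$ to a continuous character $\hat\chi\colon G\to\T$; triviality of $\hat\chi$ (as $G$ is \map) then forces $\chi=\hat\chi\restriction_D$ to be trivial. For the converse implication, a continuous character of $G$ restricts to a trivial character of $D$ and hence vanishes on all of $G$ by density and continuity. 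This one lemma yields closure under dense subgroups (the forward implication) and closure under completions at once (apply the converse implication to the dense inclusion $G\hookrightarrow\widetilde G$).

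Next I would handle direct products. Given a continuous character $\chi\colon\prod_{i\in I}G_i\to\T$, composing with the canonical inclusion $\iota_i\colon G_i\to\prod_{i\in I}G_i$ (placing $g$ in coordinate $i$ and the identity elsewhere) yields a continuous character $\chi\circ\iota_i$ of $G_i$, which is trivial because $G_i$ is \map. Hence $\chi$ kills every coordinate subgroup, and therefore the subgroup they algebraically generate, which is precisely $\bigoplus_{i\in I}G_i$. As $\bigoplus_{i\in I}G_i$ is dense in $\prod_{i\in I}G_i$ in the product topology, continuity of $\chi$ forces $\chi$ to be trivial, so the product is \map.

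Finally, for the direct sum I would observe that, under any of the natural group topologies it may carry (the subspace topology from the product, or the finer coproduct topology), the canonical inclusions $\iota_i\colon G_i\to\bigoplus_{i\in I}G_i$ remain continuous, so the same composition argument shows that every continuous character of $\bigoplus_{i\in I}G_i$ annihilates each coordinate subgroup and hence is trivial; in the subspace case one may alternatively just view $\bigoplus_{i\in I}G_i$ as a dense subgroup of the \map\ product $\prod_{i\in I}G_i$ and quote the dense-subgroup case already established. I expect the only genuinely delicate point to be the extension step in the first lemma: one must verify that the uniform continuity of $\chi$ together with the completeness of $\T$ produces a map that is not merely continuous but again a homomorphism, so that it is a legitimate character of $G$. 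Every remaining step is a routine composition followed by an appeal to density.
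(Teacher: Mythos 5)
Your proposal is correct. Note that the paper itself states this lemma without proof, treating it as standard background, so there is no in-paper argument to compare against; your write-up is precisely the standard argument one would supply. The only delicate point is the one you flag: extending a continuous character $\chi\colon D\to\T$ of a dense subgroup $D\subseteq G$ to all of $G$, which works because $\chi$ is uniformly continuous, $\T$ is complete (compact), and the extension $\hat\chi$ is again a homomorphism since the two continuous maps $(x,y)\mapsto\hat\chi(x+y)$ and $(x,y)\mapsto\hat\chi(x)+\hat\chi(y)$ agree on the dense subset $D\times D$ of $G\times G$ and $\T$ is Hausdorff. The remaining steps (restriction for completions, composition with the coordinate inclusions plus density of $\bigoplus_{i\in I}G_i$ in $\prod_{i\in I}G_i$ for products, and the purely algebraic generation argument for sums) are all sound, and together they also yield the ``in particular'' clause.
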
 

The next immediate corollary will be our main tool for producing \map \ group topologies on the abelian groups. 

\begin{corollary}\label{easy:corollary} If an abelian group $G$ can be densely embedded into a \map \ group, then $G$ admits a \map \ group topology. 
 \end{corollary}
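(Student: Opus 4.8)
The plan is to transport the topology of the ambient \map\ group back to $G$ along the embedding and then invoke the dense-subgroup clause of Lemma \ref{easy:lemma}. Concretely, suppose $\iota\colon G\to K$ is an injective homomorphism of $G$ onto a dense subgroup $\iota(G)$ of some \map\ topological abelian group $K$. Let $\tau$ be the unique group topology on $G$ making $\iota$ a topological isomorphism of $G$ onto $\iota(G)$ equipped with the subspace topology inherited from $K$; explicitly, a base of $\tau$ is formed by the $\iota$-preimages of the relatively open subsets of $\iota(G)$. Since $\iota$ is a continuous open injection, $\tau$ is a Hausdorff group topology on $G$.

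First I would observe that $\iota(G)$ is a dense subgroup of the \map\ group $K$. By Lemma \ref{easy:lemma}, the property of being \map\ is inherited by dense subgroups, so $\iota(G)$, with the subspace topology, is itself \map.

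Finally, since $(G,\tau)$ is topologically isomorphic to $\iota(G)$, and being \map\ is manifestly invariant under topological isomorphism (a character of $(G,\tau)$ is trivial precisely when the corresponding character of $\iota(G)$ is), we conclude that $(G,\tau)$ is \map. Hence $G$ admits a \map\ group topology, as required.

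I do not expect any genuine obstacle here: the assertion is an immediate corollary, and all of the substance has already been absorbed into Lemma \ref{easy:lemma}. The only step deserving a moment's care is the dense-subgroup permanence used in the second paragraph, which ultimately rests on the fact that a continuous character of a dense subgroup, being uniformly continuous with values in the complete group $\T$, extends to a continuous character of the whole group, and is therefore trivial whenever the ambient group is \map.
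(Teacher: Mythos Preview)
Your proof is correct and follows exactly the route the paper intends: the paper presents this as an ``immediate corollary'' of Lemma \ref{easy:lemma} without further argument, and you have simply spelled out that the relevant clause is the permanence of minimal almost periodicity under passage to dense subgroups, together with the trivial transport of the subspace topology along the embedding.
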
 

 The next lemma shows that under a (mild) algebraic restraint on the group $G$ all connected group topologies of $G$ are minimally almost periodic. 

\begin{lemma}\label{lemma1} Let $G$ be a connected abelian group with $r(G)<\cont$. Then $G$ is minimally almost periodic. 
 \end{lemma}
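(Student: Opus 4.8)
The plan is to prove the statement directly: fix an arbitrary character $\chi\colon G\to\T$ and show that it is trivial; since $\chi$ is arbitrary this gives that $G$ is \map. The only topological input I will use about $G$ is its connectedness, through the elementary observation that the continuous image $\chi(G)$ is a connected subgroup of $\T$ (in the subspace topology). Thus everything reduces to a single claim about the circle: a nontrivial connected subgroup $H$ of $\T$ is forced to be algebraically large, so large that it cannot be the image of a group of small rank.

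Next I would make the size of $H=\chi(G)$ precise in two stages, one topological and one algebraic. Topologically, if $\chi$ is nontrivial then $H$ has at least two points; since $\T$ is metric, fixing $a\ne b$ in $H$ and composing with $x\mapsto d(x,a)$ produces a continuous real-valued function on the connected set $H$ whose image is a nondegenerate interval, whence $|H|\ge\cont$, and of course $|H|\le|\T|=\cont$, so $|H|=\cont$. Algebraically, the crucial special feature of $\T$ is that its torsion subgroup is $\Q/\Z$, which is countable; hence $H\cap(\Q/\Z)$ is countable and the torsion-free quotient $H/(H\cap(\Q/\Z))$ still has cardinality $\cont$. A torsion-free abelian group embeds in its divisible hull $\Q^{(\rho)}$, where $\rho$ is its torsion-free rank, so its cardinality is at most $\max(\rho,\omega)$; therefore $r_0(H)=\cont$.

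Finally I would contradict the rank hypothesis. Writing $\chi(G)\cong G/\ker\chi$ and using additivity of the torsion-free rank on short exact sequences, $r_0(\chi(G))\le r_0(G)\le r(G)<\cont$. Combined with $r_0(\chi(G))=r_0(H)=\cont$ from the previous step, this is absurd, so $H=\{0\}$ and $\chi$ is trivial. The main obstacle is the algebraic step: connectedness alone yields only that $H$ is uncountable, which (in the absence of CH) is weaker than what is needed, and one must convert the precise cardinality equality $|H|=\cont$ into the rank equality $r_0(H)=\cont$. This conversion works precisely because $\T$ has only countably many torsion elements, so all of the cardinality of $H$ is carried by its torsion-free part; this is the one place where the specific structure of the circle, rather than mere connectedness, is essential.
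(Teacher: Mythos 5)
Your proof is correct, but it takes a genuinely different route from the paper's. The paper's argument hinges on the dichotomy that a connected subgroup of $\T$ is either $\{0\}$ or all of $\T$; it then rules out the second alternative by the rank comparison $r(\chi(G))\le r(G)<\cont=r(\T)$, and the proof is three lines. You never invoke this dichotomy. Instead you extract from connectedness only the softer fact that a nontrivial connected subset of a metric space has cardinality at least $\cont$, and then do more algebra: since $t(\T)\cong\Q/\Z$ is countable, the full cardinality $\cont$ of $H=\chi(G)$ is carried by its torsion-free part, whence $r_0(H)=\cont$ via the divisible-hull bound; this contradicts $r_0(\chi(G))\le r_0(G)\le r(G)<\cont$, which follows from additivity of $r_0$ on short exact sequences. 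Your route is longer, but it has a technical advantage worth noting: the only rank inequality you use, $r_0(G/\ker\chi)\le r_0(G)$, is exact and unproblematic, whereas the paper's inequality $r(\chi(G))\le r(G)$ for the total rank requires care, because total rank can increase under quotients (with the usual definition $r=r_0+\sum_p r_p$ one has $r(\Q)=1$ but $r(\Q/\Z)=\omega$); in the paper's setting the needed weaker conclusion $r(\chi(G))<\cont$ still holds, e.g.\ because $|\chi(G)|\le|G|\le\max(r(G),\omega)<\cont$, so the proof is sound, but your formulation sidesteps the issue entirely. Conversely, the paper's approach buys brevity and isolates the clean structural fact about $\T$ that makes the lemma work.
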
 

\begin{proof} Let $\chi: G\to T$ be a character of  $G$. Then $r(\chi(G)) \le r(G) < \cont = r(\T)$. Hence, $\chi(G) \ne \T$.  On the other hand, $\chi(G)$ is a connected subgroup of $\T$, so either $\chi(G) = \T$ or $\chi(G) = \{0\}$. This proves that $\chi(G) = \{0\}$; that is, $\chi = 0$. Therefore, $G$ is  minimally almost periodic. 
\end{proof}

For every abelian group $G$, the group $\HMi(G)$ is known to be \map; see \cite{DW}. We shall only need this fact for $G=\T$, as well as for torsion groups $G$.
This is why we decided to offer a short alternative proof of minimal almost periodicity of  $\HMi(G)$ in these two special cases.

\begin{proposition}
\label{proposition:Matsuyama1}
For every topological abelian group $G$ having a dense torsion subgroup $t(G)$, the group $\HMi(G)$ is  \map.  
\end{proposition}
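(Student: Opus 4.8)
The plan is to show directly that every character $\chi\colon \HMi(G)\to\T$ is trivial, exploiting the interplay between the connectedness of the canonical ``paths'' $s\mapsto g_s$ inside $\HMi(G)$ and the torsion of the elements $g\in t(G)$. The whole argument should be reducible to controlling $\chi$ on torsion generators and then invoking density.

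First I would reduce to torsion elements. Since $t(G)$ is dense in $G$, Lemma \ref{HM:groups}(iii) gives that $\HMi(t(G))$ is a dense subgroup of $\HMi(G)$, and by \eqref{direct:decomposition:of:HM(G)} it is generated by the elements $g_t$ with $g\in t(G)$ and $t\in(0,1]$. By continuity of $\chi$, it therefore suffices to prove that $\chi(g_t)=0$ for every such generator.

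Next, fix $g\in t(G)$, say of order $n$, and examine the map $\varphi\colon(0,1]\to\T$ given by $\varphi(s)=\chi(g_s)$. The two decisive observations are: (i) for each $s$ one has $n\,g_s=0$ in $\HMi(G)$, because $g_s$ takes only the values $g$ and $e$, both killed by $n$, so that $\varphi(s)\in\T[n]$, a finite subset of $\T$; and (ii) the assignment $s\mapsto g_s$ is continuous from $(0,1]$ into $\HMi(G)$ --- indeed $g_s-g_{s'}$ is supported on a set of measure $|s-s'|$, hence lies in any basic neighbourhood $O(U,\varepsilon)$ as soon as $|s-s'|<\varepsilon$ --- and moreover $g_s\to 0$ as $s\to 0^+$ by the same measure estimate. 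Thus $\varphi$ is a continuous map from the connected interval $(0,1]$ into the finite discrete group $\T[n]$ satisfying $\lim_{s\to 0^+}\varphi(s)=\chi(0)=0$; such a map is constant, and the limiting value forces that constant to be $0$. In particular $\chi(g_t)=\varphi(t)=0$, as required.

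Combining the two steps, $\chi$ vanishes on all the generators $g_t$ of the dense subgroup $\HMi(t(G))$, hence on $\HMi(t(G))$, and therefore $\chi\equiv 0$ by continuity; thus $\HMi(G)$ is minimally almost periodic. The main point to get right is not any single computation but the recognition of the correct mechanism: the ``torsion plus continuity'' argument controls $\chi$ only along paths $s\mapsto g_s$ with $g$ \emph{torsion}, which is precisely why the density hypothesis on $t(G)$ is indispensable --- it is what lets the vanishing on torsion generators propagate to all of $\HMi(G)$, and it is also what distinguishes $\HMi(G)$ from genuinely connected but maximally almost periodic groups such as $\T$ itself.
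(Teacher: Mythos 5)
Your proof is correct. It shares with the paper only the opening reduction: by Lemma \ref{HM:groups}(iii) the subgroup $\HMi(t(G))$ is dense in $\HMi(G)$, so a continuous character vanishing there vanishes everywhere. From that point on the two arguments diverge. The paper argues globally: $\HMi(t(G))$ is abelian and torsion by Lemma \ref{Dima}(ii), and connected by Lemma \ref{Dima}(i) (the Hartman--Mycielski connectedness theorem, cited from \cite{HM}); hence Lemma \ref{lemma1} --- a connected abelian group of rank $<\cont$ is \map\ --- applies to it, and Lemma \ref{easy:lemma} transfers minimal almost periodicity from the dense subgroup up to $\HMi(G)$. You argue locally, one generator at a time: for $g\in t(G)$ of order $n$, the path $s\mapsto g_s$ is continuous with $g_s\to 0$ as $s\to 0^+$, so $s\mapsto\chi(g_s)$ is a continuous map of a connected interval into the finite, hence discrete, group $\T[n]$, and must be constantly $0$. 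What your route buys is self-containedness: you never invoke the connectedness of $\HMi(G)$ as a black box (your measure estimate for $g_s-g_{s'}$ is exactly the core of the Hartman--Mycielski connectedness proof, inlined for just the generators you need), nor Lemma \ref{lemma1}, whose rank comparison $r(\chi(G))\le r(G)<\cont=r(\T)$ you replace by the cruder but sufficient fact that $\chi(g_s)$ lies in $\T[n]$. What the paper's route buys is brevity and modularity: given its stock of lemmas the proof is three lines, and those lemmas are reused elsewhere (e.g.\ Lemma \ref{Dima} in Corollary \ref{corollary:Moscow2}). Both proofs run on the same engine --- connectedness plus torsion forbids nontrivial characters into $\T$ --- but applied at different scales: the whole group versus each one-parameter path.
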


\begin{proof} Clearly, the group $\HMi(t(G))$ is torsion. It is abelian by Lemma \ref{Dima}(ii). Moreover, $\HMi(t(G))$ is connected by Lemma \ref{Dima}(i). Applying Lemma \ref{lemma1}, we conclude that $\HMi(t(G))$ is \map.

By Lemma \ref{HM:groups}(iii), the subgroup $\HMi(t(G))$ of $\HMi(G)$ is dense in $\HMi(G)$. Now Lemma \ref{easy:lemma} implies that  $\HMi(G)$ is \map \ too. 
\end{proof}

\begin{corollary}\label{HM(T)isMinAP}
$\HMi(\T)$ is \map.
\end{corollary}

\section{The bounded case: Proof of Theorem \ref{new:corollary1}}
\label{proofs0}

For the sake of this section only, let us call a group $N$ {\em nice\/} if it has the form $N = \bigoplus_{i=1}^n \Z(p^i)^{(\alpha_i)}$, where $p$ is a prime number, $n\in\N^+$,  the cardinal $\alpha_n$ is infinite, while all cardinals $\alpha_1,\dots,a_{n-1}$ are finite (possibly zero).

\begin{lemma}
\label{map:topologies:on:nice:groups}
Every nice group admits a \map\ group topology.
\end{lemma}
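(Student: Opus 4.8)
The plan is to realize $N$ as a dense subgroup of a \map\ group and then invoke Corollary~\ref{easy:corollary}. First I would record the structure of $N$: writing it as in the definition of a nice group, we have $N=F\oplus\Z(p^n)^{(\kappa)}$, where $\kappa=\alpha_n\ge\omega$ is the infinite leading invariant and $F=\bigoplus_{i=1}^{n-1}\Z(p^i)^{(\alpha_i)}$ is a \emph{finite} $p$-group of exponent at most $p^{n-1}$. The ambient \map\ group will be $\HMi(G)$ for the discrete group $G=\Z(p^n)^{(\kappa)}$. Since $G$ is a torsion abelian group, $t(G)=G$ is dense in $G$, so Proposition~\ref{proposition:Matsuyama1} guarantees that $\HMi(G)$ is \map.

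Next I would produce inside $\HMi(G)$ a dense subgroup isomorphic to the infinite part $\Z(p^n)^{(\kappa)}$. Fix a countable dense subset $S$ of $(0,1]$ and set $L=H(G,S)=\bigoplus_{t\in S}G_t$. Because $G$ is (trivially) dense in itself, Lemma~\ref{HM:groups}(ii) shows that $L$ is dense in $\HMi(G)$, while Lemma~\ref{HM:groups}(i) gives $L\cong G^{(S)}\cong(\Z(p^n)^{(\kappa)})^{(\omega)}\cong\Z(p^n)^{(\kappa)}$, using $\kappa\cdot\omega=\kappa$. Note that placing the cardinal $\kappa$ into $G$ itself, rather than into $S$, is what makes this work for \emph{arbitrarily large} infinite $\kappa$, since $S$ need only be countable.

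The delicate point is to recover the finite part $F$ without disturbing the isomorphism type, and this is the main obstacle. Simply replacing $G$ by $N$ in the step above would yield a dense copy of $N^{(\omega)}$, whose lower Ulm--Kaplansky invariants are infinite, so $N^{(\omega)}\not\cong N$; the $\HMi$ machinery naturally inflates the finite summands. Instead I would adjoin a single disjoint copy of $F$. Choose $t_0\in(0,1]\setminus S$ (possible since $S$ is countable and $(0,1]$ is not), and observe that $G_{t_0}\cong\Z(p^n)^{(\kappa)}$ contains a subgroup $F'\cong F$, because $\Z(p^n)$ contains a copy of $\Z(p^i)$ for every $i\le n$ and $\kappa$ is infinite (so finitely many summands suffice to host $F$). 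The directness of the decomposition \eqref{direct:decomposition:of:HM(G)} then yields $L\cap F'\subseteq\big(\bigoplus_{t\in S}G_t\big)\cap G_{t_0}=\{0\}$.

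Finally I would assemble the pieces. Setting $M:=L+F'=L\oplus F'$, the previous step gives $M\cong\Z(p^n)^{(\kappa)}\oplus F=N$, and since $M\supseteq L$ with $L$ dense, the subgroup $M$ is dense in $\HMi(G)$. Thus $N$ embeds as a dense subgroup of the \map\ group $\HMi(G)$, and Corollary~\ref{easy:corollary} shows that $N$ admits a \map\ group topology, as required. The crux of the argument is precisely the isomorphism-type bookkeeping of the third paragraph: isolating $F'$ on a coordinate $t_0\notin S$ and using the directness of $\HMi(G)$ is what pins the construction down to $N$ itself instead of to a group with infinite lower invariants.
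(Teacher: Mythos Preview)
Your proof is correct and follows essentially the same approach as the paper: both arguments embed the infinite summand $G=\Z(p^n)^{(\alpha_n)}$ densely as $H(G,S)$ for a countable dense $S\subseteq(0,1]$, place the finite part $F$ into coordinates outside $S$, and invoke the directness of \eqref{direct:decomposition:of:HM(G)} together with Proposition~\ref{proposition:Matsuyama1} and Corollary~\ref{easy:corollary}. The only cosmetic difference is that the paper embeds $F$ into the full complement $H(G,(0,1]\setminus S)$, whereas you use a single coordinate $G_{t_0}$ with $t_0\notin S$; your version is a mild simplification of the same idea.
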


\begin{proof} Let $N$ be a nice group as defined above. Then $N=F\oplus G$, where $F=\bigoplus_{i=1}^{n-1} \Z(p^i)^{(\alpha_i)}$ is a finite (possibly zero) group and $G=\Z(p^n)^{(\alpha_n)}$. 

Fix a countable dense subset $S$ of $(0,1]$. The subgroup $H(G,S)$ of $\HMi(G)$ is dense in $\HMi(G)$ by Lemma \ref{HM:groups} (ii). By item (i) of the same lemma, $H(G,S)\cong G^{(\omega)}$. Since $\alpha_n$ is infinite, $G\cong G^{(\omega)}$. This allows us to fix an isomorphism $\xi:G\to H(G,S)$.

Let $T=(0,1]\setminus S$. Then $\HMi(G)=H(G,(0,1])=H(G,T)\oplus H(G,S)$. Since  $H(G,T)\cong G^{(T)}\cong G^{(\cont)}\cong \Z(p^n)^{(\cont)}$, our assumption on $F$ allows us to fix a monomorphism $\eta:F\to H(G,T)$.

Clearly, the sum $\theta=\eta\oplus \xi: N=F\oplus G\to H(G,T)\oplus H(G,S)=\HMi(G)$ of the monomorphisms $\eta$ and $\xi$ is a monomorphism. In particular, $N\cong \theta(N)$.

Since $\xi(G)=H(G,S)$ is dense in $\HMi(G)$, so is  $\theta(N)$.  The group $\HMi(G)$ is \map\ by Proposition \ref{proposition:Matsuyama1}. Now it remains to apply Corollary \ref{easy:corollary}. 
\end{proof}

From Lemmas \ref{easy:lemma} and \ref{map:topologies:on:nice:groups} one immediately obtains the following corollaries

\begin{corollary}
\label{direct:sums:of:nice:are;map}
A direct sum of nice groups admits a \map\ group topology.
\end{corollary}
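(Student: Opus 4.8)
The plan is to combine the two immediately preceding lemmas. Let $\{N_i : i \in I\}$ be a family of nice groups and set $N = \bigoplus_{i\in I} N_i$. The strategy is a two-step application: first equip each summand with a suitable \map\ topology, then transfer this property to the direct sum via the permanence result.

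First I would invoke Lemma \ref{map:topologies:on:nice:groups}, which guarantees that each individual nice group $N_i$ admits a \map\ group topology $\tau_i$. This is the substantive input, but it has already been established: its proof constructs a dense embedding of each nice group into the \map\ group $\HMi(\Z(p^n)^{(\alpha_n)})$ by splitting off the finite part $F$ into a copy residing on the complement $H(G,T)$ and mapping the infinite part $G$ isomorphically onto the dense subgroup $H(G,S)$.

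Second I would apply Lemma \ref{easy:lemma}, whose explicit ``in particular'' clause states that if each $G_i$ admits a \map\ group topology, then so does $\bigoplus_{i\in I} G_i$. Taking $G_i = N_i$, this yields at once that $N = \bigoplus_{i\in I} N_i$ admits a \map\ group topology, completing the argument.

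There is no genuine obstacle at this stage: all the difficulty has been absorbed into the two cited lemmas, and the corollary is a formal consequence of their conjunction. The only point worth noting is that the index set $I$ may be arbitrary (in particular uncountable), but Lemma \ref{easy:lemma} is stated for direct sums over arbitrary index sets, so no cardinality restriction intervenes and the conclusion holds in full generality.
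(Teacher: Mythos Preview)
Your proof is correct and follows exactly the paper's own approach: the corollary is stated there as an immediate consequence of Lemmas \ref{easy:lemma} and \ref{map:topologies:on:nice:groups}, which is precisely what you invoke.
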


\begin{corollary} \label{new:corollary}
A bounded abelian group admits a minimally almost periodic group topology whenever all its Ulm-Kaplanskly invariants are infinite.
\end{corollary} 

\begin{lemma}
\label{nice:groups}
A bounded $p$-group having infinite leading Ulm-Kaplansky invariant is a finite direct sum of nice groups.
\end{lemma}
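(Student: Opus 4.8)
The plan is to decompose the given $p$-group according to which of its Ulm-Kaplansky invariants are infinite, cutting the sum into consecutive blocks each of which turns out to be nice. First I would write the group as $G = \bigoplus_{i=1}^{m} \Z(p^i)^{(\beta_i)}$, where $\beta_m$ is the leading Ulm-Kaplansky invariant; by hypothesis $\beta_m\ge\omega$.

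Next, let $i_1 < i_2 < \dots < i_k = m$ be precisely those indices $i\in\{1,\dots,m\}$ for which $\beta_i$ is infinite. This list is non-empty, since it contains $m$, and its largest member is $m$. Putting $i_0 = 0$, I would collect, for each $j\in\{1,\dots,k\}$, the summands of orders $p^{i_{j-1}+1},\dots,p^{i_j}$ into a single block
$$
N_j = \bigoplus_{i = i_{j-1}+1}^{i_j} \Z(p^i)^{(\beta_i)}.
$$
Since the half-open intervals $(i_{j-1},i_j]$ partition $\{1,\dots,m\}$, this immediately yields $G = \bigoplus_{j=1}^{k} N_j$, a finite direct sum.

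The key point is then to verify that each $N_j$ is nice. Setting $n = i_j$, I would rewrite $N_j = \bigoplus_{i=1}^{n} \Z(p^i)^{(\gamma_i)}$ by declaring $\gamma_i = \beta_i$ for $i_{j-1} < i \le i_j$ and $\gamma_i = 0$ otherwise; this is legitimate precisely because the definition of a nice group permits the lower invariants $\gamma_1,\dots,\gamma_{n-1}$ to vanish. The leading invariant $\gamma_n = \beta_{i_j}$ is infinite by the choice of $i_j$. For the remaining indices $i$ with $i_{j-1} < i < i_j$, none lies in the list $i_1,\dots,i_k$ (these are consecutive members of that list), so $\gamma_i = \beta_i$ is finite; and $\gamma_i = 0$ for $i \le i_{j-1}$ is finite as well. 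Hence every lower invariant of $N_j$ is finite while the leading one is infinite, so $N_j$ is nice.

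I expect no genuine obstacle: the whole argument is an elementary grouping of the cyclic summands. The only point requiring a moment's attention is the bookkeeping observation that a nice group is allowed to have zero lower multiplicities, which is exactly what lets each block—even one whose cyclic summands begin at order $p^{i_{j-1}+1}$ rather than $p$—count as nice once it is padded with trivial lower-order components.
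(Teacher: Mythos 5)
Your proof is correct, and it is essentially the paper's argument made explicit: the paper's one-line proof (``straightforward induction on the number of infinite Ulm-Kaplansky invariants'') unrolls precisely into your block decomposition, where each block runs between consecutive indices carrying infinite invariants. Your bookkeeping observation that nice groups may have zero lower multiplicities is exactly the point that makes the grouping work, so nothing is missing.
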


\begin{proof} Straightforward  induction on the number of infinite Ulm-Kaplansky invariants of the group.
\end{proof}

\bigskip 
\noindent {\bf Proof of Theorem \ref{new:corollary1}.}
We have to prove that a bounded abelian group admits a minimally almost periodic group topology if and only if all its leading Ulm-Kaplanskly invariants are infinite. Since the necessity is clear
from Proposition \ref{Necessary:condition},
 we are left with the proof of the sufficiency.

Let $G$ be a bounded abelian group having all its leading Ulm-Kaplanskly invariants infinite. Since $G$ is a finite sum of $p$-groups, our assumption combined with Lemma \ref{nice:groups} allows us to claim that $G$ is a (finite) direct sum of nice groups. The conclusion now follows from Lemma \ref{direct:sums:of:nice:are;map}.
\qed

\section{Extention of monomorphisms into powers of $\HMi(\T)$}
\label{extension:section}

\begin{lemma}\label{embedding:lemma0}
Let $G$ be an abelian group and let $K$ be a divisible abelian group with  $r_p(K) \ge |G|$ for every prime $p$ and $p=0$. If $H$ is a  subgroup of $G$ and $j: H \to K$ is a monomorphism such that there exists a subgroup $K_1$ of $K$ with $K_1\cap j(H) = \{0\}$ and $K_1 \cong K$, then there exists a monomorphism $j': G \to K$ extending $j$. 
 \end{lemma}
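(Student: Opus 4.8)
The plan is to exploit the injectivity (that is, divisibility) of $K$ together with the extra disjoint copy $K_1$ to build the required monomorphism one component at a time. Since $K_1\cong K$ and $K$ is divisible, $K_1$ is a divisible subgroup of $K$, hence a direct summand; so I fix a decomposition $K=K_1\oplus K_2$ with projections $\pi_i:K\to K_i$, and write $j=(j_1,j_2)$ where $j_i=\pi_i\circ j:H\to K_i$. The hypothesis $K_1\cap j(H)=\{0\}$, combined with injectivity of $j$, forces the second component $j_2$ to be injective: if $j_2(h)=0$ then $j(h)\in K_1\cap j(H)=\{0\}$, so $h=0$.

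Next I would extend the injective component and record its kernel. As $K_2$ is divisible, hence an injective $\Z$-module, the map $j_2:H\to K_2$ extends to a homomorphism $\bar{j}_2:G\to K_2$. Put $L=\ker\bar{j}_2$. Because $\bar{j}_2$ restricts to the injective $j_2$ on $H$, we get $L\cap H=\{0\}$; consequently the canonical map $G\to G/H$ embeds $L$, whence $|L|\le|G/H|\le|G|$ and $H+L=H\oplus L$ inside $G$.

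The decisive step is to produce a first component $\bar{j}_1:G\to K_1$ that extends $j_1$ and is \emph{injective on $L$}; then $j':=(\bar{j}_1,\bar{j}_2):G\to K_1\oplus K_2=K$ is the desired monomorphism. Indeed $j'$ extends $j$ by construction, and if $j'(g)=0$ then $\bar{j}_2(g)=0$ forces $g\in L$, whereupon $\bar{j}_1(g)=0$ and injectivity of $\bar{j}_1$ on $L$ gives $g=0$. To build $\bar{j}_1$, I first observe that every abelian group of cardinality at most $|G|$ embeds into $K_1$: its divisible hull $D$ satisfies $r_0(D),r_p(D)\le|G|$ for all primes $p$, while $K_1\cong K$ has $r_0(K_1),r_p(K_1)\ge|G|$, so $D$, and hence the group itself, embeds into $K_1$. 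Applying this to $L$ produces a monomorphism $\mu:L\to K_1$. On $H\oplus L$ I then define $\tau(h+l)=j_1(h)+\mu(l)$, a homomorphism with $\tau\restriction_H=j_1$ and $\tau\restriction_L=\mu$ injective. Finally, since $K_1$ is divisible (injective), I extend $\tau$ to $\bar{j}_1:G\to K_1$; it still restricts to $j_1$ on $H$ and to the injective $\mu$ on $L$, completing the construction.

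I expect the main obstacle to be arranging injectivity of the \emph{full} map rather than merely extending $j$ as a homomorphism, which divisibility supplies for free. The manoeuvre that overcomes this is splitting off the disjoint copy $K_1$: it lets me extend the genuinely injective component $j_2$ into $K_2$ with no injectivity concern, and then repair injectivity cheaply on the small kernel $L$ using the generous rank of $K_1$. The only point requiring care is the rank bookkeeping for the divisible hull—ranks are unchanged on passing to the divisible hull and are bounded by cardinality—which is precisely what the hypotheses $r_p(K)\ge|G|$ for every prime $p$ and for $p=0$ are there to match.
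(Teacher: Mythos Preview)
Your argument is correct and follows essentially the same strategy as the paper: split off the divisible copy $K_1$ as a direct summand, extend $j$ freely into one summand using injectivity, and use the rank hypothesis on $K_1\cong K$ to restore injectivity via the other summand. The paper streamlines one step: it chooses the complement $K_0$ of $K_1$ so that $j(H)\subseteq K_0$ (possible since $K_1$ is divisible and $K_1\cap j(H)=\{0\}$), which makes the $K_1$-component of $j$ identically zero on $H$; the repair map is then simply an embedding of $G/H$ into $K_1$, avoiding your intermediate kernel $L$ and the auxiliary map $\tau$ on $H\oplus L$.
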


\begin{proof} Since $K_1$ divisible  and $K_1\cap j(H) = \{0\}$, we can write $ K = K_0\oplus K_1$, where the subgroup $K_0$ of $K$ contains $j(H)$. 
Since, $K_1 \cong K$, $r_p(K_1) \ge |G|\ge |G/H|$ for every prime $p$ and $p=0$, so there exists a  monomorphism $m: G/H \to K$. Extend $j: H \to K_0$ to a homomorphism $j_0: G\to K_0$ and   define  $j_1: G \to K$ to be the composition of $m$ and the canonical homomorphism $ G \to G/H$. Now define  $j': G \to K = K_0\oplus K_1$ by  $j'(g) = (j_0(g), j_1(g))\in K$. If $j'(g) = $ for some $g\in G$, then $j_0(g) =  0$ and $j_1(g) = 0$. The latter equality gives $g\in H$. Hence,  $0= j_0(g) = j(g)$. Thus $g = 0$, as $j$ is a monomorphism. 
\end{proof}

\begin{lemma}\label{New:claim} Let $\kappa$ be an  infinite cardinal, $G$ be an abelian group with $|G|\leq \kappa$ and $H$ be a subgroup of $G$. 
Then every monomorphism $j:H \to \HMi(\T)^\kappa$ can be extended to a monomorphism $j': G \to \HMi(\T)^\kappa$. 
\end{lemma}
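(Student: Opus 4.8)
The plan is to deduce Lemma \ref{New:claim} directly from the extension principle in Lemma \ref{embedding:lemma0}, applied with the target group $K=\HMi(\T)^\kappa$. First I would record that $K$ is divisible and abelian: $\T$ is divisible abelian, hence so is $\HMi(\T)$ by Lemma \ref{Dima}(ii), and an arbitrary power of a divisible abelian group is again divisible abelian. Next I would compute the ranks of $K$. Using the isomorphism $\HMi(\T)\cong\T^{(\cont)}$ from the proof of Lemma \ref{Dima}(ii), the $p$-socle of $K$ contains $\T[p]^\kappa\cong\Z(p)^\kappa$, an $\mathbb{F}_p$-vector space of cardinality $2^\kappa$ and hence of dimension $2^\kappa$; similarly $K$ contains a copy of $\Z^\kappa$, whose torsion-free rank is $2^\kappa$. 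Thus $r_p(K)=2^\kappa$ for every prime $p$ and for $p=0$. In particular, since $|G|\le\kappa<2^\kappa$, the rank hypothesis $r_p(K)\ge|G|$ of Lemma \ref{embedding:lemma0} is satisfied.

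The heart of the argument, and the step I expect to be the main obstacle, is to produce a subgroup $K_1$ of $K$ with $K_1\cong K$ and $K_1\cap j(H)=\{0\}$. I would obtain it by splitting off a small divisible summand containing $j(H)$. Since $|j(H)|\le|H|\le|G|\le\kappa$, the divisible hull of $j(H)$ computed inside $K$ is a divisible subgroup $D$ with $|D|\le\kappa$. A divisible subgroup of an abelian group is always a direct summand, so we may write $K=D\oplus K_1$. Then $j(H)\subseteq D$ forces $K_1\cap j(H)=\{0\}$. It remains to check $K_1\cong K$: since $K$, $D$ and $K_1$ are divisible, each rank is additive over the decomposition $K=D\oplus K_1$, and because $r_p(D)\le|D|\le\kappa<2^\kappa=r_p(K)$ for every prime $p$ and for $p=0$, we conclude $r_p(K_1)=r_p(K)$ for all these $p$. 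Two divisible abelian groups with equal ranks are isomorphic, whence $K_1\cong K$.

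Having verified all the hypotheses, namely that $K$ is divisible abelian, that $r_p(K)\ge|G|$ for every prime $p$ and for $p=0$, and the existence of $K_1\le K$ with $K_1\cong K$ and $K_1\cap j(H)=\{0\}$, I would invoke Lemma \ref{embedding:lemma0} to obtain a monomorphism $j'\colon G\to K=\HMi(\T)^\kappa$ extending $j$, which is exactly the desired conclusion. The only genuinely delicate points are the two cardinal-arithmetic facts (that the relevant socle and torsion-free rank of $K$ really equal $2^\kappa$, using that the $\mathbb{F}_p$-dimension of an infinite $\mathbb{F}_p$-space equals its cardinality) and the standard structure-theoretic inputs that divisible subgroups split off and that divisible groups are classified by their ranks; everything else is a routine bookkeeping of the decomposition.
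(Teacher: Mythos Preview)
Your proof is correct and follows essentially the same route as the paper: one verifies that $K=\HMi(\T)^\kappa$ is divisible with $r_p(K)=2^\kappa$ for all $p\in\{0\}\cup\Prm$, takes the divisible hull $D$ of $j(H)$ inside $K$ (which has cardinality at most $\kappa$), splits $K=D\oplus K_1$, observes that the rank computation forces $K_1\cong K$, and then invokes Lemma \ref{embedding:lemma0}. The only cosmetic difference is that the paper computes the ranks via the explicit description $\HMi(\T)\cong(\Q\oplus\Q/\Z)^{(\cont)}$ rather than by exhibiting the subgroups $\Z(p)^\kappa$ and $\Z^\kappa$.
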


\begin{proof} From (\ref{direct:decomposition:of:HM(G)}) one deduces the algebraic isomorphisms
\begin{equation}\label{HM}
\HMi(\T) \cong \T^{(\cont)}\cong  (\Q\oplus \Q/\Z)^{(\cont)}.
\end{equation}
Hence, $K = \HMi(\T)^\kappa$ is a divisible group with 
\begin{equation}\label{ranks}
r_p(K) = 2^\kappa >  |G| \ge r_p(G)\ \mbox{ for every  }\ p \in \{0\}\cup \Prm. 
\end{equation}
 Since $|j(H)|\leq \kappa$,  its divisible hull $D$ in $K$ satisfies $|D|\leq \kappa$. We can split $K = D \oplus D'$, where $D' \cong K/D$ is divisible, so  completely determined by its $p$-ranks that are the same those of $K$, in view of (\ref{ranks}) and $|D|\leq \kappa$. Hence, $D'\cong \HMi(\T)$, 
so Lemma \ref{embedding:lemma0} applies.

The extension of a monomorphism $j:H \to K$ to a monomorphism $j': G \to K$ can be obtained also by a direct application of \cite[Lemma 3.17]{DS_Forcing}. 
 \end{proof}

From (\ref{HM}) one deduces the algebraic isomorphism $\HMi(\T) \cong \HMi(\T)^\omega$. Hence, with $\kappa = \omega$ Lemma \ref{New:claim} gives:

\begin{corollary}\label{embedding:lemma}
Let $G$ be a countable abelian group. If $H$ is a subgroup of $G$ and $j: H \to \HMi(\T)$ is a monomorphism, then there exists a monomorphism $j': G \to \HMi(\T)$ extending $j$.  \end{corollary}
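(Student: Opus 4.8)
The plan is to obtain this corollary as the special case $\kappa=\omega$ of Lemma \ref{New:claim}, the only extra ingredient being the algebraic isomorphism $\HMi(\T)\cong\HMi(\T)^\omega$ recorded immediately before the statement. Concretely, I would first fix an isomorphism $\varphi:\HMi(\T)^\omega\to\HMi(\T)$, then transport the given data along it, apply Lemma \ref{New:claim}, and transport the resulting extension back.

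First I would justify the isomorphism $\HMi(\T)\cong\HMi(\T)^\omega$ from \eqref{HM}. By \eqref{HM}, $\HMi(\T)\cong\T^{(\cont)}\cong(\Q\oplus\Q/\Z)^{(\cont)}$ is a divisible abelian group of cardinality $\cont$ whose ranks $r_p(\HMi(\T))=\cont$ for every $p\in\{0\}\cup\Prm$. The countable power $\HMi(\T)^\omega$ is again divisible (a product of divisible groups is divisible), has cardinality $\cont^{\omega}=\cont$, and has all its ranks equal to $\cont$: each rank is at least $\cont$ because $\HMi(\T)$ embeds as a direct factor, and at most $\cont$ by the cardinality bound. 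By the structure theorem for divisible abelian groups, a divisible group is determined up to isomorphism by the ranks $r_p$ ($p\in\{0\}\cup\Prm$), so $\HMi(\T)\cong\HMi(\T)^\omega$, and I fix an isomorphism $\varphi:\HMi(\T)^\omega\to\HMi(\T)$.

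Second, given a countable abelian group $G$, a subgroup $H$, and a monomorphism $j:H\to\HMi(\T)$, I would set $\tilde{\jmath}=\varphi^{-1}\circ j:H\to\HMi(\T)^\omega$, which is again a monomorphism. Since $G$ is countable we have $|G|\le\omega$, so Lemma \ref{New:claim} with $\kappa=\omega$ produces a monomorphism $\tilde{\jmath}':G\to\HMi(\T)^\omega$ extending $\tilde{\jmath}$. Composing back, $j':=\varphi\circ\tilde{\jmath}':G\to\HMi(\T)$ is then a monomorphism, and for $h\in H$ we get $j'(h)=\varphi(\tilde{\jmath}'(h))=\varphi(\tilde{\jmath}(h))=\varphi(\varphi^{-1}(j(h)))=j(h)$, so $j'$ extends $j$, as required.

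The only point with any content is the isomorphism $\HMi(\T)\cong\HMi(\T)^\omega$, and even that reduces to the classification of divisible abelian groups together with the cardinal computation $\cont^{\omega}=\cont$; the remainder of the argument is purely formal conjugation by $\varphi$. Thus I expect no genuine obstacle here, the corollary being a direct specialization of Lemma \ref{New:claim}.
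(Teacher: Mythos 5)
Your proposal is correct and is exactly the paper's own argument: the authors also deduce $\HMi(\T)\cong\HMi(\T)^\omega$ from \eqref{HM} (i.e., from the classification of divisible abelian groups by their ranks) and then invoke Lemma \ref{New:claim} with $\kappa=\omega$. Your write-up merely makes explicit the transport along a fixed isomorphism $\varphi$ and the rank computation, which the paper leaves implicit.
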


\section{Countable groups admitting dense embeddings in $\HMi(\T)$}\label{Dense:emd:HM(T)}

The following lemma is a particular case of \cite[Lemma 4.1]{DS_HMP}.
\begin{lemma}
\label{simple:lemma}
Given a real number $\delta>0$, one can find  $j\in\N$ such that for every integer $k\ge j$, each element $x\in\T$ and every 
open arc $A$ in $\T$ of length $\delta$, there exists $y\in A\setminus\{0\}$ such that  $ky=x$.
\end{lemma}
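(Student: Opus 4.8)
The plan is to solve the equation $ky=x$ explicitly and to exploit the rigidity of its solution set. We may clearly assume $\delta<1$ (a shorter arc is the harder case, and any longer arc contains one of length $<1$). Writing $x$ as $x_0+\Z$ with $x_0\in[0,1)$, the solutions $y\in\T=\R/\Z$ of $ky=x$ are precisely the $k$ points $\frac{x_0+n}{k}+\Z$ with $n=0,1,\dots,k-1$; these are equally spaced around $\T$ with consecutive gap exactly $1/k$, independently of $x$. The whole problem thus reduces to a counting question: how many points of an evenly spaced set of gap $1/k$ are forced to lie inside an open arc of length $\delta$?

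The only genuine subtlety is that we must produce a solution $y$ \emph{different from} $0$. Guaranteeing a single solution inside $A$ does not suffice, since it might be $0$; instead I will force \emph{at least two} solutions into $A$, so that after discarding $0$ at least one admissible $y$ survives. Lifting the open arc of length $\delta<1$ to an open interval of $\R$ of the same length, the solutions lift to a genuine arithmetic progression of gap $1/k$, and such an interval contains at least two progression points as soon as $\delta>2/k$: if $y_0$ is the least progression point exceeding the left endpoint, then both $y_0$ and $y_0+1/k$ lie in the interval.

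Accordingly I would set $j=\lceil 2/\delta\rceil+1$ (any integer exceeding $2/\delta$ works equally well). Then for every $k\ge j$ we have $1/k\le 1/j<\delta/2$, hence $\delta>2/k$, so by the previous paragraph every open arc $A$ of length $\delta$ contains at least two of the $k$ solutions of $ky=x$. At most one of them equals $0$, so at least one solution $y\in A\setminus\{0\}$ remains. Crucially, the gap $1/k$ and the threshold $j$ depend only on $\delta$, not on $x$ or $A$, which yields the required uniformity.

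I expect no real obstacle; the argument is elementary. The single point demanding care is the step from ``a solution in $A$'' to ``a nonzero solution in $A$'': this is exactly why the threshold involves $2/\delta$ rather than $1/\delta$, and why keeping the inequalities strict ($\delta>2/k$, open arc) avoids any boundary difficulty.
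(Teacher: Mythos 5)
Your proof is correct, and it differs from the paper in an essential way: the paper does not prove this lemma at all, but simply derives it as a particular case of an external result (Lemma~4.1 of the Dikranjan--Shakhmatov paper on Hewitt--Marczewski--Pondiczery type theorems, cited as \cite{DS_HMP}). Your argument is a direct, self-contained one: you identify the solution set of $ky=x$ as $k$ equally spaced points of gap $1/k$, lift the arc to an interval in $\R$, and choose $j>2/\delta$ so that the interval is forced to contain \emph{two} progression points, whence at least one survives after discarding $0$. The details check out: the least progression point $y_0$ exceeding the left endpoint $a$ satisfies $y_0\le a+1/k$, so $y_0+1/k\le a+2/k<a+\delta$, and since the two points differ by $1/k<1$ they project to distinct elements of $\T$, at most one of which is $0$; the threshold depends only on $\delta$, giving the required uniformity in $x$ and $A$. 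The one point worth making explicit is exactly the one you flagged yourself: a single solution in $A$ would not suffice (it could be $0$), which is why the factor $2$ in $j>2/\delta$ is needed. What your route buys is a short elementary proof that makes the paper self-contained at this point; what the paper's route buys is brevity and an appeal to a more general statement already in the literature, which covers this lemma as a special case.
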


\begin{lemma}
\label{roots}
Given  a non-empty open subset $W$ of $\HMi(\T)$, one can find an integer $j\in\N$ such that, for every $g\in \HMi(\T)$ and each integer $k\ge j$, there exists $h\in W$  satisfying $kh=g$.
\end{lemma}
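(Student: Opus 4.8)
The plan is to exploit the explicit description of $\HMi(\T)$ coming from \eqref{direct:decomposition:of:HM(G)} together with the uniform root-extraction provided by Lemma \ref{simple:lemma}. First I would reduce $W$ to a basic neighborhood: since $W$ is non-empty and open, fix a point $a\in W$ and a basic neighborhood of the form $a+O(U,\varepsilon)\subseteq W$, where by \eqref{basic:nghb:in:HM} we may take $U$ to be an open arc around $0$ in $\T$ of some length $\delta>0$. I would then recall that, directly from the definition of $g_t$ and the decomposition \eqref{direct:decomposition:of:HM(G)}, every element of $\HMi(\T)$ is a step function $I\to\T$ taking only finitely many values and with value $0$ at the point $x=1$, and conversely every such finitely-valued step function vanishing at $1$ belongs to $\HMi(\T)$ (it is a finite telescoping sum of functions of the form $c_t$). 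Applying Lemma \ref{simple:lemma} to this fixed $\delta$ produces the threshold $j\in\N$; crucially $j$ depends only on $\delta$, hence only on $W$, and not on the target $g$.

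Next, given $g\in\HMi(\T)$ and an integer $k\ge j$, I would build $h$ pointwise. Both $a$ and $g$ are finitely-valued step functions, so their common refinement partitions $[0,1)$ into finitely many intervals $I_1,\dots,I_N$ on each of which $a\equiv a_\ell$ and $g\equiv g_\ell$ are constant. For each $\ell$, the set $a_\ell+U$ is an open arc of length $\delta$, so Lemma \ref{simple:lemma} (applied with $x=g_\ell$ and $A=a_\ell+U$, using $k\ge j$) yields an element $y_\ell\in a_\ell+U$ with $k y_\ell=g_\ell$. I would then define $h$ to equal $y_\ell$ on $I_\ell$ for each $\ell$, and set $h(1)=0$.

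Finally I would verify the three required properties. The function $h$ is a finitely-valued step function with $h(1)=0$, hence $h\in\HMi(\T)$. By construction $k h=g$ on $[0,1)$, and $k h(1)=0=g(1)$, so $k h=g$ throughout $I$. Moreover $h(t)-a(t)=y_\ell-a_\ell\in U$ for every $t\in I_\ell$, so $\{t\in I: h(t)-a(t)\notin U\}$ is contained in the measure-zero set $\{1\}$; thus $\mu$ of this set is $0<\varepsilon$, giving $h\in a+O(U,\varepsilon)\subseteq W$. The main subtlety I anticipate is not the root-finding itself, which Lemma \ref{simple:lemma} handles uniformly, but rather keeping the constructed $h$ genuinely inside $\HMi(\T)$: one must use the precise characterization of $\HMi(\T)$ as finitely-valued step functions vanishing at the endpoint $1$, and must choose $U$ to be an actual arc (so that each translate $a_\ell+U$ is again an arc of the same length $\delta$ to which Lemma \ref{simple:lemma} applies).
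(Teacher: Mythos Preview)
Your proof is correct and follows essentially the same approach as the paper's own proof: fix a basic neighborhood $a+O(U,\varepsilon)\subseteq W$ with $U$ (or an arc inside $U$) of length $\delta$, invoke Lemma~\ref{simple:lemma} to obtain $j$ depending only on $\delta$, take a common refinement of the step partitions of $a$ and $g$, and on each piece choose a $k$th root of the value of $g$ inside the translated arc centered at the value of $a$. Your explicit verification that the resulting step function lies in $\HMi(\T)$ (via the characterization as finitely-valued step functions vanishing at $1$) is a little more careful than the paper's, but the argument is the same.
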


\begin{proof} Since $W$ is non-empty, we can fix $w\in W$. Since $W$ is open in $\HMi(\T)$, there exist $\varepsilon>0$ and an open neighbourhood $U$ of $0$ in $\T$ such that $w+O(U,\varepsilon)\subseteq W$. Obviously, $U$ contains an open arc $L$ of length $\delta$ for sufficiently small $\delta$.
For this $\delta$,  we apply Lemma \ref{simple:lemma} to find $j\in\N$ satisfying the conclusion of this lemma.

Let $g\in \HMi(\T)$. Since $g,w\in \HMi(\T)$, there exist $m\in\N$, $t_0, t_1,t_2,\dots,t_m\in\R$,
$x_1,x_2,\dots,x_m\in\T$  and $z_1,z_2,\dots,z_m\in \T$
such that 
$0=t_0<t_1<t_2<\dots<t_{m-1}<t_m\le 1$, $g(s)=x_l$ 
and  $w(s)=z_l$ whenever $1\le l\le m$ and $t_{l-1}\le s<t_l$.

Let $k\ge j$ be an integer. Since $j$ satisfies the conclusion of Lemma \ref{simple:lemma}, for every integer $l$ with $1\le l\le m$, we can find an element $y_l$ inside the arc $z_l+L$ such that $k y_l = x_l$.

Finally, define $h\in \HMi(\T)$ by letting $h(s)=y_l$ whenever $1\le l\le m$ and $t_{l-1}\le s<t_l$. Clearly, $k h = g$ by our construction.
Furthermore, since $y_l\in z_l+L\subseteq z_l+U$ for every $l=1,2,\dots,m$, it follows that $h\in w+O(U,\varepsilon)\subseteq W$.
\end{proof}

\begin{lemma}\label{MoscowLemma1}
For every prime number $p$, the Pr\"ufer group $\Z(p^\infty)$ is algebraically isomorphic to a dense subgroup of $\HMi(\T)$.
\end{lemma}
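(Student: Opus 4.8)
The plan is to build, directly inside $\HMi(\T)$, a chain of elements $h_1,h_2,\dots$ satisfying the defining relations of the Pr\"ufer group and generating a dense subgroup. Recall that $\Z(p^\infty)$ is generated by elements $c_1,c_2,\dots$ with $c_n$ of order exactly $p^n$ and $pc_{n+1}=c_n$; hence it suffices to produce $h_n\in\HMi(\T)$ satisfying the same relations and orders for which $\langle h_n:n\in\N\rangle$ is dense. Indeed, since the $h_n$ satisfy the same relations and have the same orders, the assignment $c_n\mapsto h_n$ extends to an isomorphism of $\Z(p^\infty)$ onto $\langle h_n:n\in\N\rangle$, and density makes this the required dense embedding.

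First I would fix, using Corollary \ref{HM(T):has:countable:base}, an enumeration $\{W_k:k\in\N\}$ of a base of non-empty open subsets of $\HMi(\T)$, together with a starting element $h_1\in\HMi(\T)$ of order exactly $p$ (such torsion elements exist because $\HMi(\T)\cong\T^{(\cont)}$ by \eqref{HM}). The construction then proceeds by recursion on $k$, each stage \emph{diving into} $W_k$ so as to guarantee density.

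The key device is Lemma \ref{roots}, which, however, produces $m$-th roots only for $m$ above a threshold $j=j(W)$; in particular a single $p$-th root need not land inside a prescribed $W_k$. I would circumvent this by taking a high $p$-power root and then filling in the intermediate generators automatically. Precisely, suppose $h_1,\dots,h_{n_k}$ have already been built with the correct orders and relations. Apply Lemma \ref{roots} to $W_k$ to obtain $j_k$, choose $s\ge 1$ with $p^s\ge j_k$, and apply the lemma with $g=h_{n_k}$ and the integer $p^s\ge j_k$ to obtain $h\in W_k$ with $p^s h=h_{n_k}$. Setting $h_{n_k+i}=p^{s-i}h$ for $i=1,\dots,s$ produces new elements satisfying $p\,h_{n_k+i}=h_{n_k+i-1}$ and with $h_{n_k+s}=h\in W_k$; one then puts $n_{k+1}=n_k+s$.

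Finally I would verify the two requirements. The orders come out exact automatically: from $p^{s}h_{n_k+s}=h_{n_k}$ with $h_{n_k}$ of order $p^{n_k}$, a direct computation gives $p^{n_k+i}h_{n_k+i}=p^{n_k}(p^{s}h)=0$ while $p^{n_k+i-1}h_{n_k+i}=p^{n_k-1}h_{n_k}\ne 0$, so $h_{n_k+i}$ has order exactly $p^{n_k+i}$; this uses only that $h_{n_k}\ne 0$, which holds throughout since every $h_n$ has order $p^n>1$. Thus $\langle h_n:n\in\N\rangle\cong\Z(p^\infty)$. Density is immediate, since $h_{n_{k+1}}=h\in W_k$ lies in the subgroup for every $k$ and $\{W_k\}$ is a base. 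The main obstacle is precisely the mismatch between the \emph{small} root $p$ demanded by the Pr\"ufer relations and the \emph{large} root threshold imposed by Lemma \ref{roots}; resolving it through the $p^s$-root-with-interpolation trick, while keeping all orders exact, is the crux of the argument.
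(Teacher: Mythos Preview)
Your argument is correct and follows essentially the same route as the paper: fix a countable base, start with an element of order $p$, and at each stage use Lemma~\ref{roots} to take a $p^s$-th root (with $p^s$ above the threshold $j$) landing in the next basic open set. The only cosmetic difference is that the paper keeps just the ``jump'' elements $g_n$ with $p^{i_n}g_n=g_{n-1}$ and observes that they generate $\Z(p^\infty)$ directly, whereas you interpolate the intermediate $p$-th roots $h_{n_k+i}=p^{s-i}h$ so as to have $p\,h_{m+1}=h_m$ for every $m$; both generate the same subgroup.
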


\begin{proof} By Corollary \ref{HM(T):has:countable:base},  $\HMi(\T)$ has a countable base. Let $\mathscr{B}=\{V_n:n\in\N^+\}$ be an enumeration of some countable base for $\HMi(\T)$ such that all $V_n$ are non-empty.

Let $g_0\in \HMi(\T)$ be an arbitrary element of order $p$. By induction on $n\in\N^+$ we shall define an element $g_n\in \HMi(\T)$ and an integer $i_n\in\N^+$  satisfying the following conditions:
\begin{itemize}
\item[(i$_n$)] $g_n\in V_n$;
\item[(ii$_n$)]
$p^{i_n} g_n=g_{n-1}$.
\end{itemize}
Assume that $g_{n-1}$ and $i_{n-1}$ satisfying (i$_{n-1}$) and (ii$_{n-1}$) have already been constructed. Applying Lemma \ref{roots}
to $W=V_n$, we select $j$ as in the conclusion of this lemma.  Next we fix $i_n\in\N^+$ such that $p^{i_n}\ge j$. Applying Lemma \ref{roots} to 
$g=g_{n-1}$ and $k=p^{i_n}$, we can find $g_n\in \HMi(\T)$ satisfying (i$_n$) and (ii$_n$). The inductive construction is complete.

Let $G$ be the subgroup of $\HMi(\T)$ generated by the set $\{g_n:n\in\N\}$. Since (i$_n$) holds for every $n\in\N^+$ and $\mathscr{B}$ is a base for $\HMi(\T)$, it follows that $G$ is dense in $\HMi(\T)$. Since $g_0$ has order $p$, and (ii$_n$) holds for all $n\in\N$, we conclude that $G$ is isomorphic to $\Z(p^\infty)$.
\end{proof}

For every $g\in \HMi(\T)$, we let 
$$
S(g)=\{t\in[0,1]: g(t)\not=0\}.
$$

Let
$$
HM^*(\T)=\{g\in \HMi(\T): \sup S(g)<1\}.
$$

\begin{lemma}\label{MoscowLemma2} Given a real number $\eta< 1$ and a non-empty open subset $W$ of $\HMi(\T)$, one can find an integer $j\in\N$ such that,
for each integer $k\ge j$, there exists $h\in W$ satisfying the following conditions:
\begin{itemize}
  \item[(i)] $kh=0$;
  \item[(ii)] $h(s)\not=0$ for some $s\in [0,1]$ with $\eta<s$;
  \item[(iii)] $h\in HM^*(\T)$.
\end{itemize}
\end{lemma}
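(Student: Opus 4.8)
The plan is to follow the architecture of the proof of Lemma \ref{roots}, but with two modifications: solve the \emph{homogeneous} equation $kh=0$ instead of $kh=g$, and truncate the resulting element near the right endpoint so that it lands in $HM^*(\T)$. First I would fix a point $w$ of the non-empty open set $W$ and, using openness, choose $\varepsilon>0$ and an open neighbourhood $U$ of $0$ in $\T$ with $w+O(U,\varepsilon)\subseteq W$. Shrinking if necessary, I pick an open arc $L\subseteq U$ of some length $\delta>0$ and feed this $\delta$ into Lemma \ref{simple:lemma} to obtain the integer $j$, which will witness the conclusion. The decisive observation is that applying Lemma \ref{simple:lemma} with the \emph{fixed target} $x=0$ produces, for every $k\ge j$ and every arc $v+L$ of length $\delta$, an element $y\in(v+L)\setminus\{0\}$ with $ky=0$. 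Thus I can approximate any prescribed value $v$ of $w$ to within $L\subseteq U$ by a \emph{nonzero} element of the $k$-torsion subgroup of $\T$: its nonzero-ness is exactly what will yield (ii), while membership in $v+L$ keeps $h$ close to $w$.

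Next, given $k\ge j$, I would choose reals $\sigma,\tau$ with $\max(\eta,0,1-\varepsilon)<\tau<1$ and $\max(\eta,0)<\sigma<\tau$ (possible since $\eta<1$ and $\varepsilon>0$), refine the canonical partition of $w$ on $[0,\tau)$ so as to include $\sigma$, and on each piece $[r_{l-1},r_l)$ where $w$ is constant with value $v_l$ set $h$ equal to a chosen $y_l\in(v_l+L)\setminus\{0\}$ satisfying $ky_l=0$; finally I set $h\equiv 0$ on $[\tau,1]$. Being a step function that vanishes on $[\tau,1]$, the element $h$ belongs to $\HMi(\T)$, and the three conclusions fall out directly: every value on $[0,\tau)$ is $k$-torsion and $h=0$ on $[\tau,1]$, giving $kh=0$ and hence (i); $h(\sigma)$ is one of the nonzero values $y_l$ while $\sigma>\eta$, giving (ii); and $S(h)\subseteq[0,\tau)$ gives $\sup S(h)\le\tau<1$, i.e. (iii). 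Membership $h\in W$ follows because $h(s)-w(s)\in L\subseteq U$ for every $s\in[0,\tau)$, so the set $\{s: h(s)-w(s)\notin U\}$ is contained in $[\tau,1]$, whose measure is $1-\tau<\varepsilon$; therefore $h\in w+O(U,\varepsilon)\subseteq W$.

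I expect the only genuine obstacle to be the apparent conflict between requirement (ii), which forces $h$ to be nonzero \emph{above} $\eta$, and requirement (iii) together with $h\in W$, which forces $h$ to vanish near $1$ while still remaining inside the neighbourhood $w+O(U,\varepsilon)$ of $w$. The resolution is to exploit the measure budget $\varepsilon$ built into $O(U,\varepsilon)$: by truncating $h$ to $0$ on a top interval $[\tau,1]$ of length less than $\varepsilon$, the mismatch between $h$ and $w$ there is invisible to the neighbourhood $O(U,\varepsilon)$, so $h$ stays in $W$ regardless of the value of $w$ near $1$; placing the partition point $\sigma$ inside $(\eta,\tau)$ then secures (ii) at no extra cost, since the torsion values delivered by Lemma \ref{simple:lemma} are automatically nonzero.
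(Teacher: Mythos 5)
Your proposal is correct and follows essentially the same route as the paper's proof: fix $w\in W$ with $w+O(U,\varepsilon)\subseteq W$, feed an arc $L\subseteq U$ of length $\delta$ into Lemma \ref{simple:lemma} to get $j$, use the case $x=0$ of that lemma to replace the values of $w$ by nonzero $k$-torsion elements, and truncate to $0$ on a terminal interval of measure less than $\varepsilon$ to secure (iii) and membership in $W$. The paper implements the same idea by refining the canonical partition of $w$ so that $\eta<t_{m-2}$ and $1-\varepsilon<t_{m-1}$, which plays exactly the role of your points $\sigma$ and $\tau$.
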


\begin{proof} Since $W$ is non-empty, we can fix $w\in W$. Since $W$ is open in $\HMi(\T)$, there exist $\varepsilon>0$ and an open neighbourhood $U$ of $0$ in $\T$ such that $w+O(U,\varepsilon)\subseteq W$. Obviously, $U$ contains an open arc $L$ of length $\delta$ for sufficiently small $\delta$.
For this $\delta$,  we apply Lemma \ref{simple:lemma} to find $j\in\N$ satisfying the conclusion of this lemma.

Since $w\in \HMi(\T)$, there exist $m\in\N$, $t_0, t_1,t_2,\dots,t_m$ and $z_1,z_2,\dots,z_m\in \T$ such that $0=t_0<t_1<t_2<\dots<t_{m-1}<t_m\le 1$
and $w(s)=z_l$ whenever $1\le l\le m$ and $t_{l-1}\le s<t_l$. By subdividing further if necessary, we may assume, without loss of generality, that $1-\gamma<t_{m-2}$ and $1-\varepsilon<t_{m-1}$. 

Let $k\ge j$ be an integer. Since $j$ satisfies the conclusion of Lemma \ref{simple:lemma}, for every integer $l$ with $1\le l\le m-1$, 
we can find a non-zero element $y_l$ inside the arc $z_l+L$ such that $k y_l = 0$.

Define $h\in \HMi(\T)$ by letting $h(s)=y_l$ whenever $1\le l\le m-1$ and $t_{l-1}\le s<t_l$, and letting $h(s)=0$ whenever $t_{m-1}\le s\le 1$. Since $y_l\in z_l+L\subseteq z_l+U$ for every $l=1,2,\dots,m-1$,  and the measure of the interval $[t_{m-1},1]$ is less than $\varepsilon$ by the choice of $t_{m-1}$, it follows that $h\in w+O(U,\varepsilon)\subseteq W$.

Clearly, $k h = g$ by our construction, so (i) holds. Furthermore, (ii) holds by our choice of $t_{m-2}$. Indeed, defining $s=(t_{m-2}+t_{m-1})/2$, we get $\eta<t_{m-2}<s<1$ and $h(s)=y_{m-1}\not=0$. Finally, since $h(s)=0$ whenever $t_{m-1}\le s\le 1$, one has $\sup S(h)\le t_{m-1}<1$, which implies (iii). 
\end{proof}

\begin{lemma}\label{MoscowLemma3} For every $n\in\N$, let $C_n$ be a cyclic group of order $a_n$. 
If $\lim_{n\to\infty} a_n=\infty$, then  $G=\bigoplus_{n\in\N} C_n$ is algebraically isomorphic to a dense subgroup of $\HMi(\T)$. 
\end{lemma}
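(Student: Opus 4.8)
The plan is to realize $G$ as the subgroup $\langle h_n:n\in\N\rangle$ of $\HMi(\T)$ generated by a carefully chosen sequence $(h_n)_{n\in\N}$, where $h_n$ has order exactly $a_n$, the sum $\sum_n\langle h_n\rangle$ is direct, and the generated subgroup meets every member of a fixed countable base $\{V_m:m\in\N\}$ of $\HMi(\T)$ (such a base exists by Corollary \ref{HM(T):has:countable:base}). If this is achieved, then $\langle h_n\rangle\cong\Z(a_n)=C_n$ and directness yield $\langle h_n:n\in\N\rangle=\bigoplus_n\langle h_n\rangle\cong\bigoplus_nC_n=G$, while meeting every $V_m$ gives density.

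The mechanism for directness is a \emph{staircase of supports} controlled through $HM^*(\T)$. Having built $h_0,\dots,h_{n-1}$ inside $HM^*(\T)$, I would set $\eta_n=\max_{i<n}\sup S(h_i)<1$ and insist that $h_n\in HM^*(\T)$ possess a distinguished point $s_n\in(\eta_n,1)$ at which $h_n(s_n)\in\T$ has order exactly $a_n$; then $h_i(s_n)=0$ for all $i<n$, and the heights $s_n$ strictly increase. Independence follows: in any relation $\sum_nc_nh_n=0$, evaluating at $s_N$ for the largest index $N$ with $c_Nh_N\neq0$ kills the lower terms (by support) and the higher terms (already $0$ as functions by maximality of $N$), forcing $c_N\cdot h_N(s_N)=0$; since $h_N(s_N)$ has order $a_N$ and $a_Nh_N=0$, this gives $c_Nh_N=0$, a contradiction. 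In particular $\langle h_n\rangle\cong\Z(a_n)$, as required.

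The new difficulty, not covered by Lemma \ref{MoscowLemma2} (which only yields $a_nh_n=0$, i.e. order \emph{dividing} $a_n$), is to force the order to equal $a_n$ exactly while keeping $h_n$ inside the prescribed target neighbourhood. I would resolve this by running the proof scheme of Lemma \ref{MoscowLemma2} with one modification: reserve a subinterval $J\subseteq(\eta_n,1)$ of measure below the tolerance $\varepsilon$ of the target neighbourhood on which $h_n$ takes the fixed value $\tfrac1{a_n}\in\T$, which has order exactly $a_n$; off $J$ and off a final interval on which $h_n$ vanishes, approximate the centre $w$ of the neighbourhood closely by $a_n$-torsion values supplied by Lemma \ref{simple:lemma}. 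The total measure on which $h_n$ strays from $w$ stays below $\varepsilon$, so $h_n$ lands in the neighbourhood; every value of $h_n$ is $a_n$-torsion, so $a_nh_n=0$; and the value on $J$ both pins the order to exactly $a_n$ and supplies the signature point $s_n$.

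The main obstacle is the bookkeeping needed to reconcile the prescribed orders with density. Every index $n$ must receive a generator of the \emph{prescribed} order $a_n$, yet invoking the strengthened construction on a target $V_m$ requires $a_n$ to exceed a threshold $j_m$ determined by $V_m$, and for small $n$ the order $a_n$ may be too small. Here the hypothesis $\lim_na_n=\infty$ is decisive. At stage $n$ I would target $h_n$ at the least index $m$ whose density requirement is still unmet and for which $a_n\ge j_m$, declaring $V_m$ satisfied; if no such $m$ exists I would place $h_n$ \emph{freely} as the explicit element $g_t\in\HMi(\T)$ with $g=\tfrac1{a_n}\in\T$ and $\eta_n<t<1$, which already has order $a_n$, lies in $HM^*(\T)$, and supplies a fresh signature point. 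A standard dovetailing argument then shows that, since for each fixed $m$ one has $a_n\ge j_m$ for all large $n$ while the requirements below $m$ are cleared one by one, every $V_m$ is eventually met; the finitely many small-order generators are absorbed harmlessly by the free placement. This produces the desired dense copy of $G$ in $\HMi(\T)$.
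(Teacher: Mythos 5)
Your proposal is correct, and while it shares the paper's basic machinery (Lemma \ref{simple:lemma}, step functions tracking a centre $w$ of a basic neighbourhood, and the staircase of supports via $HM^*(\T)$), it resolves the crucial difficulty by a genuinely different route. The paper does \emph{not} force exact orders: at stage $n$ it applies Lemma \ref{MoscowLemma2} verbatim to $W=V_n$, chooses an index $i_n$ with $a_{i_n}$ above the threshold $j_n$ of $V_n$ (this is where $\lim_n a_n=\infty$ enters there), and obtains a dense subgroup $H=\bigoplus_n\langle g_n\rangle$ whose cyclic summands have orders merely \emph{dividing} $a_{i_n}$, so that $H$ is only isomorphic to a subgroup of $G$; it then invokes the extension machinery of Section \ref{extension:section} (Corollary \ref{embedding:lemma}, resting on divisibility and the ranks of $\HMi(\T)\cong(\Q\oplus\Q/\Z)^{(\cont)}$) to extend the inclusion $H\hookrightarrow\HMi(\T)$ along $H\le G$ to a monomorphism $\nu\colon G\to\HMi(\T)$, whose image is dense because it contains $H$. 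You instead pin each generator to order exactly $a_n$ by strengthening Lemma \ref{MoscowLemma2} (the reserved subinterval of $(\eta_n,1)$ carrying the value $1/a_n$, absorbed by the measure tolerance $\varepsilon$ together with the final vanishing interval), and you pay for this with the dovetailing scheduler matching the prescribed orders $a_n$ to the thresholds $j_m$ of the basic open sets, which is where $\lim_n a_n=\infty$ enters in your version. Both routes are sound: the paper's keeps each inductive step trivial and reuses a lemma it needs elsewhere anyway (Theorem \ref{embedding:countable:groups}, Lemma \ref{w-divisible:embeddings}), while yours is self-contained inside the Hartman--Mycielski group and avoids the extension lemma altogether. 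Your exact-order device also has a technical advantage at the independence step: since $h_N(s_N)$ has order equal to $o(h_N)$, the relation $c\,h_N\ne 0$ genuinely forces $c\,h_N(s_N)\ne 0$; the paper's corresponding claim --- that $g_n(s)\ne 0$ at a single fresh point $s>\eta$ already yields $\langle g_n\rangle\cap\langle g_1,\dots,g_{n-1}\rangle=\{0\}$ --- by itself only gives $g_n\notin\langle g_1,\dots,g_{n-1}\rangle$ and tacitly uses $o(g_n(s))=o(g_n)$, which is exactly what your reserved-interval trick supplies.
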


\begin{proof} By Corollary \ref{HM(T):has:countable:base},  $\HMi(\T)$ has a countable base. Let $\mathscr{B}=\{V_n:n\in\N^+\}$ be an enumeration of some countable base for $\HMi(\T)$ such that all $V_n$ are non-empty.

By induction on $n\in\N^+$ we shall define an element $g_n\in \HMi(\T)$ and an integer $i_n\in\N^+$  satisfying the following conditions:
\begin{itemize}
\item[(i$_n$)] $g_n\in V_n \cap HM^*(\T) $;
\item[(ii$_n$)] $a_ng_n= 0$; 
\item[(iii$_n$)] $\langle g_n \rangle \cap \langle g_1,\ldots  g_{n-1} \rangle = \{0\}$ when $n>1$.  
\end{itemize}

With $\eta = 1/2$ and $W=V_1$ apply Lemma \ref{MoscowLemma2} to find an integer $j_1$ as in the lemma. Then choosing $i_1$ with $a_{i_1} \ge j_1$ there exists $g_1 \in V_1\cap HM^*(\T)$ with $a_{i_1} g_1= 0$ and $h(s) \ne 0$ for some $s > 1/2$.  Assume that $n>1$ and  $g_{n-1}\in V_{n-1}\cap HM^*(\T) $ with $a_{i_{n-1}} g_{n-1}= 0$ have already been constructed. Applying Lemma \ref{MoscowLemma2} to 
$$
W=V_n\mbox{ and }\eta = \max \{\sup S(g_i): i=1,2,\ldots, n-1\}< 1
$$
we choose $j_n$ as in the conclusion of this lemma. Then choosing $i_n$ with $a_{i_n} \ge j_n$ there exists $g_n \in V_1\cap HM^*(\T)$ with $a_{i_n} g_n= 0$ and $g_n(s) \ne 0$ for some $s > \eta$.  The latter property ensures that (iii$_n$) holds true, while (i$_n$) and (ii$_n$) are obviously satisfied. The inductive construction is complete.

Let $H$ be the subgroup of $\HMi(\T)$ generated by the set $\{g_n:n\in\N\}$.
Since (i$_n$) holds for every $n\in\N^+$ and $\mathscr{B}$ is a base for $\HMi(\T)$, it follows that $H$ is dense in $\HMi(\T)$.

 For every $n\in\N$ denote by  $C'_n$ the cyclic subgroup of $\HMi(\T)$ generated by $g_n$ and let $m_n = o(g_n)$ be its order. 
 Since $m_n| a_{i_n}$ by  (ii$_n$),  $C'_n$ is a cyclic group isomorphic to a subgroup of $C_n$.  As (iii$_n$) holds for all $n\in\N$, we conclude that $H$ is isomorphic to $\bigoplus_{n\in\N} C'_n$. From $C_n'\leq  C_{i_n}$ we deduce that $H$ is isomorphic to a subgroup of $G$. Let $j: H \to G$ be the monomorphism witnessing that.  
Since $\HMi(\T)$ is divisible and since $G$ is countable, while $r_p(\HMi(\T)) = \cont$, there exists a monomorphism $\nu: G \to \HMi(\T)$ that composed with the monomorphism $j$ gives the inclusion $H \hookrightarrow \HMi(\T)$ (see Lemma \ref{embedding:lemma}).  In particular, $\nu(G)$ contains the dense subgroup $H$, hence $\nu(G)$ is a dense subgroup of $\HMi(\T)$. 
\end{proof}

\begin{lemma}
\label{basic:subgroup}
An unbounded torsion abelian group $G$ contains a subgroup algebraically isomorphic to one of the following three groups:
\begin{itemize}
\item[(i)] the Pr\"ufer group $\Z(p^\infty)$, for some prime number $p$;
\item[(ii)] the direct sum $\bigoplus _{n=1}^\infty \Z(p^n)^{(\alpha_n)}$, 
where infinitely many ordinals $\alpha_n$ are non-zero;
\item[(iii)] the direct sum $\bigoplus_{p\in \pi}\Z(p)$ for a suitable infinite set $\pi$ of prime numbers.
\end{itemize}
\end{lemma}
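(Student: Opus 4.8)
The plan is to reduce, via the primary decomposition of a torsion group, to the analysis of a single unbounded $p$-group, and there to separate the divisible and the reduced cases. Write $G=\bigoplus_{p\in\Prm} G_p$ as the direct sum of its $p$-primary components, and set $\pi_0=\{p\in\Prm: G_p\ne\{0\}\}$. I would then split according to whether $\pi_0$ is infinite or finite.

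If $\pi_0$ is infinite, then for each $p\in\pi_0$ I pick a nonzero $x_p\in G_p$ and, replacing it by a suitable multiple, arrange that $x_p$ has order exactly $p$. Since the $G_p$ are independent, the subgroup generated by $\{x_p:p\in\pi_0\}$ is the internal direct sum $\bigoplus_{p\in\pi_0}\langle x_p\rangle\cong\bigoplus_{p\in\pi_0}\Z(p)$, yielding alternative (iii) with $\pi=\pi_0$. If instead $\pi_0$ is finite, then a finite direct sum of bounded groups is bounded, so the unboundedness of $G$ forces at least one component $P:=G_p$ to be unbounded, and I focus on this single unbounded $p$-group. Splitting $P=D\oplus R$ into its maximal divisible subgroup $D$ and a reduced complement $R$, the non-reduced case $D\ne\{0\}$ is immediate: a nonzero divisible $p$-group is a direct sum of copies of $\Z(p^\infty)$, so $P\supseteq\Z(p^\infty)$ and alternative (i) holds.

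The remaining case, a \emph{reduced} unbounded $p$-group $P=R$, is the main obstacle, since unboundedness by itself only produces elements of arbitrarily large order, whereas alternative (ii) requires cyclic \emph{direct summands} of unbounded order. To bridge this gap I would invoke a basic subgroup $B=\bigoplus_{n}\Z(p^n)^{(\beta_n)}$ of $P$ (a pure direct sum of cyclic groups with $P/B$ divisible, whose existence is classical), and argue that $B$ must be unbounded. Suppose to the contrary that $p^N B=\{0\}$ for some $N\ge 1$. Given $x\in P$, divisibility of $P/B$ lets me write $x=p^N y+b$ with $y\in P$ and $b\in B$; as $p^N b=0$ this gives $p^N x=p^{2N}y$, whence $p^N P=p^{2N}P$. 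The descending chain $p^N P\supseteq p^{N+1}P\supseteq\dots\supseteq p^{2N}P$ therefore collapses, so $p^N P=p^{N+1}P=p\cdot p^N P$ is divisible; since $P$ is reduced this forces $p^N P=\{0\}$, contradicting unboundedness.

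Consequently $\{n:\beta_n\ne 0\}$ is infinite, and $B\cong\bigoplus_{n}\Z(p^n)^{(\beta_n)}$ realises alternative (ii) with $\alpha_n=\beta_n$. Assembling the three outcomes ((iii) when $\pi_0$ is infinite; (i) when some component is unbounded and non-reduced; (ii) when it is unbounded and reduced) completes the proof. The only genuinely non-formal ingredient is the basic-subgroup argument of the last step; everything else is routine bookkeeping with the primary and divisible-versus-reduced decompositions.
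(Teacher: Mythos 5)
Your proof is correct and follows essentially the same route as the paper's: primary decomposition of the torsion group, a case split on whether the support $\pi_0$ is infinite (yielding (iii)), divisible versus reduced for an unbounded primary component (yielding (i)), and a basic subgroup of the reduced unbounded $p$-component (yielding (ii)). The one genuine local difference is the argument that the basic subgroup $B$ cannot be bounded. The paper uses \emph{both} purity and divisibility of the quotient: from $p^nB=\{0\}$ and purity $p^nG_p\cap B=p^nB$ it gets $p^nG_p\cap B=\{0\}$, then divisibility of $G_p/B$ gives $G_p=p^nG_p\oplus B$, so $p^nG_p\cong G_p/B$ is a divisible subgroup of the reduced group $G_p$, a contradiction. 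You avoid purity altogether: from $p^NB=\{0\}$ and divisibility of $P/B$ you derive $p^NP=p^{2N}P$, so the chain $p^NP\supseteq p^{N+1}P\supseteq\dots\supseteq p^{2N}P$ collapses and $p^NP=p\cdot(p^NP)$ is $p$-divisible, hence divisible (being a $p$-group), hence trivial by reducedness. Your variant is marginally more economical, using only property (c) of basic subgroups where the paper uses (b) and (c); both arguments are valid, and both rest on the same nontrivial input, the existence of basic subgroups.
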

\begin{proof}
If $G$ contains non-trivial divisible subgroups, then $G$ contains  the Pr\"ufer group $\Z(p^\infty)$ for some prime number $p$. Thus (i) holds.

Assume now that $G$ is reduced, i.e., $G$ contains no non-trivial divisible subgroups. 
For every $p\in \Prm$ let $G_p$ be the $p$-torsion subgroup of $G$ and let $\pi :=\{p\in \Prm: G_p\ne \{0\} \}$. 

If $\pi$ is infinite, then pick an element $x_p\in G_p$ with $o(x_p) =p$ for each $p\in \pi$ and let $A$ be the subgroup generated by the set $\{ x_p : p\in \pi\}$. Then $A\cong \bigoplus_{p\in \pi}\Z(p)$, so (iii) holds.

Assume now that $\pi$ is finite.
Then for some prime $p\in \pi$ the group $G_p$ must be unbounded. According to \cite{Fuchs}, there exists a basic subgroup $B$ of $G_p$, i.e., a subgroup with the following three properties: 
\begin{itemize}
  \item[(a)] $B$ is a direct sum of cyclic groups, i.e., $B = \bigoplus _{n=1}^\infty \Z(p^n)^{(\alpha_n)}$;
  \item[(b)] $B$ is pure, i.e., $p^n G_p \cap B = p^n B$ for every for some $n\in \N^+$; 
  \item[(c)] $G_p/B$ is divisible. 
\end{itemize}

It is enough to show that $B$ is not bounded, which would yield (ii).
Suppose that $B$ is bounded; that is, $p^n B = 0$ for some $n\in \N^+$, then from (b) we deduce that $p^n G_p \cap B =\{0\}$. From (c) we deduce that 
$$
(p^nG_p + B)/B=p^n(G_p/B) = G_p/B,
$$ 
hence $G_p = p^nG_p + B$. Since this sum $G_p = p^nG_p \oplus B$ is direct, (c) implies that $G_p$ contains a subgroup, namely $p^nG_p\cong G_p/B$ that is divisible group.  Since $G_p$ is reduced, we conclude that $p^n G$ is trivial, so $G_p=B$ is bounded, a contradiction. 
\end{proof}

\begin{theorem}
\label{embedding:countable:groups}
Let $G$ be a countable abelian group satisfying at least one of the following conditions:
\begin{itemize}
\item[(a)] the torsion part $t(G)$ of $G$ is unbounded;
\item[(b)] the rank of $G$ is infinite.
\end{itemize}
Then $G$ is algebraically isomorphic to a dense subgroup of $\HMi(\T)$.
\end{theorem}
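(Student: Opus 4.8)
The plan is to handle both hypotheses through a single scheme. First I would exhibit inside $G$ a subgroup $H$ together with a monomorphism $j\colon H\to\HMi(\T)$ whose image is \emph{dense}, and then invoke Corollary \ref{embedding:lemma} to extend $j$ to a monomorphism $j'\colon G\to\HMi(\T)$. Since $j'(G)\supseteq j(H)$ and $j(H)$ is dense in $\HMi(\T)$, the subgroup $j'(G)$ is dense as well; as $j'$ is injective, $G\cong j'(G)$ is the required dense copy of $G$. Thus everything reduces to locating inside $G$ a subgroup that the density lemmas of Section \ref{Dense:emd:HM(T)} can place densely into $\HMi(\T)$.

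For case (a) the torsion part $t(G)$ is an unbounded torsion abelian group, so Lemma \ref{basic:subgroup} yields a subgroup $H\le t(G)\le G$ isomorphic to one of the three listed types. If $H\cong\Z(p^\infty)$, then $H$ embeds densely in $\HMi(\T)$ directly by Lemma \ref{MoscowLemma1}. In the other two cases I would pass to a subgroup of $H$ that is a direct sum of finite cyclic groups whose orders tend to infinity: in type (ii), choosing a single summand $\Z(p^n)$ for each of the infinitely many indices $n$ with $\alpha_n\ne0$ gives orders $p^n\to\infty$; in type (iii), enumerating the infinite set $\pi$ of primes gives orders $p\to\infty$. In both situations Lemma \ref{MoscowLemma3} embeds this subgroup densely in $\HMi(\T)$, producing the desired dense $j$, and Corollary \ref{embedding:lemma} finishes the case.

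For case (b) the (torsion-free) rank of $G$ is infinite, so $G$ contains a free subgroup $F\cong\Z^{(\omega)}$. The hard part is that the available density lemmas (\ref{MoscowLemma1}, \ref{MoscowLemma3}) only place \emph{torsion} subgroups densely, so I must construct a dense copy of a free group by hand. Fixing a countable base $\{V_n:n\in\N\}$ of nonempty open sets of $\HMi(\T)$ (Corollary \ref{HM(T):has:countable:base}), I would build $g_n\in V_n\cap HM^*(\T)$ by induction so that each $g_n$ takes a value of \emph{infinite order} in $\T$ at some point $s_n$ lying strictly above $\sup S(g_i)$ for all $i<n$. This is a variant of Lemma \ref{MoscowLemma2}: in the approximation of a chosen $w\in W$ by a step function supported on an initial segment $[0,t_{m-1}]$, one simply replaces the torsion values $y_l$ (with $ky_l=0$) by arbitrary values in the arcs $z_l+L\subseteq z_l+U$, taking the value on the last active interval to have infinite order. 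The \emph{private high point} $s_n$ then forces independence, exactly as the disjoint-support device does in Lemma \ref{MoscowLemma3}: any relation $\sum_{i\le N}m_ig_i=0$, evaluated at $s_N$, collapses to $m_Ng_N(s_N)=0$, which is impossible for $m_N\ne0$ because $g_N(s_N)$ has infinite order. Hence $\langle g_n:n\in\N\rangle$ is free of rank $\omega$ and, by membership in the $V_n$, dense; being isomorphic to $F$ it supplies a monomorphism $j\colon F\to\HMi(\T)$ with dense image, and Corollary \ref{embedding:lemma} again extends it to all of $G$. The main obstacle is precisely this inductive construction: independence must be secured by the support trick rather than by a cardinality or Baire-category argument, since the completeness of $\HMi(\T)$ is not available and the torsion cosets obstructing independence can be of size $\cont$.
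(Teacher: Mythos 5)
Your proof is correct, and in case (a) it coincides with the paper's own argument: reduce via Corollary \ref{embedding:lemma} to finding a single subgroup of $G$ that embeds densely in $\HMi(\T)$, then feed the subgroup supplied by Lemma \ref{basic:subgroup} into Lemma \ref{MoscowLemma1} or Lemma \ref{MoscowLemma3}. (Your extra step in type (ii) --- keeping one summand $\Z(p^n)$ per non-zero $\alpha_n$ so that the orders genuinely tend to infinity --- is a useful refinement, since $\bigoplus_n\Z(p^n)^{(\alpha_n)}$ itself need not satisfy the hypothesis of Lemma \ref{MoscowLemma3} when some $\alpha_n$ is infinite.) Where you genuinely diverge is case (b). The paper disposes of it in one line: $\T$ has a dense cyclic subgroup $D\cong\Z$ (generated by an irrational rotation), so Corollary \ref{corollary:Moscow2} --- that is, the density of $H(D,S)$ for a countable dense $S\subseteq(0,1]$, via Lemma \ref{HM:groups}(i) and (ii) --- already exhibits a dense subgroup of $\HMi(\T)$ isomorphic to $D^{(\omega)}\cong\Z^{(\omega)}$, and the extension step finishes. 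You overlooked this corollary (your claim that the available machinery ``only places torsion subgroups densely'' is not accurate), and instead built a dense free subgroup by hand: an infinite-order variant of Lemma \ref{MoscowLemma2} combined with the disjoint-support, private-high-point device of Lemma \ref{MoscowLemma3}. That construction is sound: every non-degenerate arc of $\T$ contains elements of infinite order, so the required step functions exist in $W\cap HM^*(\T)$, and evaluating a putative relation $\sum_{i\le N}m_ig_i=0$ at the highest private point $s_N$ kills $m_N$ since $g_i(s_N)=0$ for $i<N$ and $g_N(s_N)$ has infinite order; downward induction then kills all coefficients. But it costs you a new lemma and an inductive construction where the paper spends a sentence. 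What your route buys is uniformity --- both cases are handled by the same one-generator-per-basic-open-set scheme --- and it demonstrates that the support trick secures independence of infinite-order generators just as well as of torsion ones; what the paper's route buys is brevity and reuse of the already-proved density of $H(D,S)$ in $\HMi(\T)$.
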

\begin{proof}
It suffices to find a subgroup $H$ of $G$ which admits a dense embedding into 
$\HMi(\T)$. Indeed, suppose that $H$ is such a group, and let $j:H\to \HMi(\T)$ be a monomorphism such that $j(H)$ is dense in $\HMi(\T)$.
By Corollary \ref{embedding:lemma}, $j$ can be extended to 
a monomorphism $j': G\to \HMi(\T)$. Clearly, $j'(G)\cong G$.
Since $j(H)$ is dense in $\HMi(\T)$ and $j(H)\subseteq j'(G)\subseteq  \HMi(\T)$, the subgroup $j'(G)$ is dense in $\HMi(\T)$. 

To find a subgroup $H$ of $G$ which admits a dense embedding into 
$\HMi(\T)$, we consider two cases.

\smallskip
{\em Case 1\/}. {\sl Item (a) holds\/}. Applying Lemma \ref{basic:subgroup} to $t(G)$, we conclude that $t(G)$ (and thus, $G$ as well) contains a subgroup $H$ algebraically isomorphic to one of the three groups listed in  Lemma \ref{basic:subgroup}. In case (i) of Lemma \ref{basic:subgroup}, $H$ admits a dense embedding into $\HMi(\T)$ by Lemma \ref{MoscowLemma1}, while in cases (ii) and (iii) of Lemma \ref{basic:subgroup}, $H$ admits a dense embedding into $\HMi(\T)$ by Lemma \ref{MoscowLemma3}.

\smallskip
{\em Case 2\/}. {\sl Item (b) holds\/}. 
In this case,
$G$ contains a subgroup $H$ algebraically isomorphic to $\Z^{(\omega)}$. 
Since $\T$ contains a dense subgroup $D$ algebraically isomorphic to $\Z$,
Corollary \ref{corollary:Moscow2} implies that $\HMi(\T)$ contains a dense subgroup $N$ with $N\cong D^{(\omega)}\cong \Z^{(\omega)}\cong H$.
\end{proof}

\begin{remark}
\label{finitely:generated:are:not:dense}
It can be easily checked that {\em no finitely generated subgroup of $\HMi(\T)$ can be dense in $\HMi(\T)$\/}. 
In particular, finite direct sums 
of cyclic groups can never be dense in $\HMi(\T)$. 
\end{remark}

\section{The countable case of Comfort-Protasov problem: Nienhuys group enters briefly}
\label{Nienhyus:section}
\label{Sec:10}

In this section we give 
a short self-contained proof of the first part of Protasov-Comfort problem; namely, we show that {\em every countable unbounded abelian group admits a  \map \ group topology\/}. To achieve this, we need to recall the construction of the minimally almost periodic and monothetic group $\Ni$ of Nienhuys \cite{N}, paying special attention only to the {\em algebraic\/} structure of  $\Ni$.

Let $c_0(\T)$ be the subgroup of $\T^\omega$ consisting of all null sequences in $\T$ (equipped with the norm topology in  $\T^\omega$). 
As $c_0(\T)$ is divisible and contains $\T^{(\omega)}$, it follows that $r_0(c_0(\T)) = \cont$ and $r_p(c_0(\T))\geq \omega$ for every $p\in \Prm$. 
On the other hand, if $x = (x_n) \in c_0(\T)$ and $px =0$, then $px_n= 0$ for all $n\in \N$. Therefore, $x_n \to 0$ implies that all but finitely many $x_n$ vanish, i.e., $x\in \T^{(\omega)}$. This proves that $c_0(\T)[p] = \T^{(\omega)}[p] = \T[p]^{(\omega)}$, i.e., $r_p(c_0(\T)) = \omega$. 

The monothetic \map \ group $\Ni$, built by Nienhuys \cite{N}, is a quotient of $c_0(\T)$ with respect to a cyclic subgroup.
Hence, $\Ni$ is divisible and has the same $p$ ranks as $c_0(\T)$. Therefore, one has the algebraic isomorphisms
\begin{equation}\label{Nienhuys:equation}
\Ni \cong c_0(\T)\cong \Q^{(\cont)} \oplus (\Q/\Z)^{(\omega)}.
\end{equation}

\begin{lemma}
\label{Saak}
Every countable unbounded abelian group admits a \map\ group topology.
\end{lemma}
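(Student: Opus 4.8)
The plan is to split a countable unbounded abelian group $G$ into two cases according to whether its torsion part is bounded or not, so that Theorem~\ref{embedding:countable:groups} can be applied directly in most situations. First I would observe that if $G$ is countable and unbounded, then either the torsion subgroup $t(G)$ is unbounded, or $t(G)$ is bounded. In the former case, condition (a) of Theorem~\ref{embedding:countable:groups} holds, so $G$ embeds densely into $\HMi(\T)$; since $\HMi(\T)$ is \map\ by Corollary~\ref{HM(T)isMinAP}, Corollary~\ref{easy:corollary} immediately yields a \map\ group topology on $G$. Similarly, if the rank of $G$ is infinite, condition (b) of that theorem applies and we conclude in the same way via the dense embedding into $\HMi(\T)$.

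The remaining case is the genuinely delicate one: $t(G)$ is bounded \emph{and} the rank of $G$ is finite, yet $G$ is unbounded. Here $G$ cannot embed densely into $\HMi(\T)$ (cf.\ Remark~\ref{finitely:generated:are:not:dense} for the obstruction in the finitely generated case), so a different target group is required. This is precisely where the Nienhuys group $\Ni$ enters. Since $t(G)$ is bounded but $G$ is unbounded, the torsion-free rank $r_0(G)$ must be positive and finite, and $G$ contains a free subgroup of rank $r_0(G)$ with bounded-torsion complement; the algebraic structure of such a $G$ is that of a finitely generated-by-torsion-free part extended by a bounded torsion group. The key algebraic fact I would exploit is the isomorphism \eqref{Nienhuys:equation}, namely $\Ni \cong \Q^{(\cont)} \oplus (\Q/\Z)^{(\omega)}$, which shows that $\Ni$ is divisible with $r_0(\Ni) = \cont$ and $r_p(\Ni) \geq \omega$ for every prime $p$. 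These ranks are large enough to accommodate any countable subgroup of $G$.

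The strategy is therefore to embed $G$ (or a suitable dense-in-$\Ni$ subgroup $H$) into $\Ni$ and transfer the \map\ topology of $\Ni$ back along this embedding using Corollary~\ref{easy:corollary}. Concretely, I would locate a countable subgroup of $\Ni$ that is dense in $\Ni$ and isomorphic to a subgroup $H$ of $G$; monothetic generation of $\Ni$ (it is topologically generated by a single element, hence separable) guarantees the existence of suitably dense countable subgroups, and divisibility together with the rank computations lets me realize $G$ itself inside $\Ni$ as a dense subgroup by an extension-of-monomorphisms argument analogous to Corollary~\ref{embedding:lemma} (replacing $\HMi(\T)$ by $\Ni$, whose $p$-ranks are also adequate). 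Once $G$ is densely embedded in the \map\ group $\Ni$, Corollary~\ref{easy:corollary} finishes the proof.

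The main obstacle I anticipate is constructing a \emph{dense} embedding of the relevant subgroup of $G$ into $\Ni$ in the bounded-torsion, finite-rank case, since unlike $\HMi(\T)$ we do not have the flexible ``root-extraction'' lemmas (Lemmas~\ref{roots} and~\ref{MoscowLemma2}) available for $\Ni$; instead one must lean on the monothetic (hence separable) structure of $\Ni$ and its explicit description as a quotient of $c_0(\T)$ to produce a dense countable subgroup of the correct isomorphism type. Verifying that the density is preserved when passing from $H$ to the extension $G\hookrightarrow \Ni$ — i.e.\ that the larger image still contains the dense subgroup — is the step that requires the most care, though it mirrors the density-preservation argument already used in the proof of Theorem~\ref{embedding:countable:groups}.
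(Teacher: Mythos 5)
Your proposal is correct and follows essentially the same route as the paper: the same case split via Theorem~\ref{embedding:countable:groups} (dense embedding into $\HMi(\T)$ plus Corollaries~\ref{HM(T)isMinAP} and~\ref{easy:corollary}), and in the remaining finite-rank, bounded-torsion case a dense embedding of a subgroup $H\cong\Z$ onto the dense cyclic subgroup of the monothetic Nienhuys group $\Ni$, extended to all of $G$ by an extension-of-monomorphisms argument whose image stays dense because it contains $j(H)$. The one ``obstacle'' you anticipate is in fact already handled: Lemma~\ref{embedding:lemma0} is stated for an arbitrary divisible target $K$, so the paper applies it directly with $K=\Ni$, splitting $\Ni = D \oplus D_1$ where $D\cong\Q$ is the divisible hull of $j(H)$ and $D_1\cong\Ni$ by \eqref{Nienhuys:equation}; no new root-extraction machinery for $\Ni$ is needed.
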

\begin{proof}
Let $G$ be a countable unbounded abelian group. We consider two cases.

\smallskip
{\em Case 1\/}. {\em Either item (a) or item (b) of Theorem \ref{embedding:countable:groups} holds\/}. In this case, the conclusion of Theorem \ref{embedding:countable:groups} guarantees that $G$ can be densely embedded into 
the 
group 
$\HMi(\T)$. Since the latter group is \map\ by Corollary \ref{HM(T)isMinAP},
applying Corollary \ref{easy:corollary} we conclude that $G$ admits a \map\ group topology.

\smallskip
{\em Case 2\/}. {\em Neither item (a) nor item (b) of Theorem \ref{embedding:countable:groups} holds\/}. In this case, $G$ has finite rank. Since $t(G)$ is bounded and $G$ is unbounded, $G$ contains a subgroup $H$ such that $H\cong\Z$.
Since $\Ni$ is monothetic, we can 
fix a dense embedding $j: H \to \Ni$. Since the divisible hull $D$ of $j(H)$ in $\Ni$ is isomorphic to $\Q$, it follows from (\ref{Nienhuys:equation}) that one can 
split $\Ni = D \oplus D_1$, where $D_1\cong \Ni/D\cong \Ni$ is a divisible group. Now we can apply Lemma \ref{embedding:lemma0}
to extend $j$ to a dense embedding $j': G \to \Ni$. 
Since $\Ni$ is \map, Corollary \ref{easy:corollary} implies that $G$ admits a \map\ group topology.
\end{proof}

\begin{remark}
\label{where:Nienhuys:is:used}
A careful analysis of the proof of Lemma \ref{Saak} shows that only the (countable) groups of finite non-zero rank require 
the recourse to the Nienhuys group $\Ni$. 
\end{remark}

\section{The general case: Proof of Theorem \ref{CH:theorem}}\label{proofs}

We recall here a fundamental notion from \cite{DGB}.

\begin{definition}
\label{w-divisible:reformulation}
\cite{DGB}
An abelian group $G$ is called {\em $w$-divisible\/} if $|mG|=|G|$ for all integers $m\ge 1$.
\end{definition}

\begin{lemma}
\label{w-divisible:embeddings}
Every uncountable $w$-divisible group is algebraically isomorphic to a dense 
subgroup of $\HMi(\T)^G$.
\end{lemma}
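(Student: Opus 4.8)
The plan is to set $\kappa=|G|$, so that the target group $\HMi(\T)^{|G|}$ is $\HMi(\T)^\kappa$, and to produce the dense embedding in two stages: first I would locate inside $G$ a subgroup $H$ that already admits a \emph{dense} embedding into $\HMi(\T)^\kappa$, and then extend that embedding to all of $G$. The second stage is immediate from the machinery already in place: once $j\colon H\to\HMi(\T)^\kappa$ is a monomorphism with dense image, Lemma \ref{New:claim} (applicable since $|G|=\kappa$) yields a monomorphism $j'\colon G\to\HMi(\T)^\kappa$ extending $j$, and since $j'(G)\supseteq j(H)$ with $j(H)$ dense, the image $j'(G)\cong G$ is dense as well. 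This is precisely the reduction device already used in the proof of Theorem \ref{embedding:countable:groups}, now carried out over the power $\HMi(\T)^\kappa$ in place of $\HMi(\T)$.

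For the first stage I would use a general density principle for products: if $D_\alpha$ is a dense subgroup of $\HMi(\T)$ for each $\alpha<\kappa$, then the direct sum $\bigoplus_{\alpha<\kappa}D_\alpha$, embedded in $\prod_{\alpha<\kappa}\HMi(\T)=\HMi(\T)^\kappa$ through the coordinate inclusions, is dense. Indeed, a basic open subset of the product restricts only finitely many coordinates, say those in a finite set $F$; on each coordinate in $F$ one chooses, by density of $D_\alpha$, a point in the prescribed open set, and puts $0$ in all remaining coordinates (where $0$ is admissible since those factors are unrestricted). Consequently it suffices to find a subgroup $H\le G$ with $H\cong\bigoplus_{\alpha<\kappa}C_\alpha$, where each $C_\alpha$ is a countable group that embeds densely into $\HMi(\T)$: sending each $C_\alpha$ by such an embedding into the $\alpha$-th coordinate gives the desired dense monomorphism $j$ on $H$. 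By Theorem \ref{embedding:countable:groups} (via Lemmas \ref{MoscowLemma1} and \ref{MoscowLemma3} and Corollary \ref{corollary:Moscow2}), it is enough that each $C_\alpha$ be countably infinite with either unbounded torsion part or infinite free rank.

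The heart of the argument, and the step I expect to be the main obstacle, is therefore the purely algebraic claim that every uncountable $w$-divisible group $G$ of cardinality $\kappa$ (in the sense of Definition \ref{w-divisible:reformulation}) contains a subgroup isomorphic to $\bigoplus_{\alpha<\kappa}C_\alpha$ with each $C_\alpha$ countable of unbounded torsion or of infinite rank. I would argue by cases according to where $\kappa$ is concentrated. If $r_0(G)=\kappa$, a maximal independent set of torsion-free elements generates a free subgroup $\cong\Z^{(\kappa)}$, which splits as $\bigoplus_{\alpha<\kappa}\Z^{(\omega)}$ after partitioning a basis into $\kappa$ countably infinite blocks, so each $C_\alpha\cong\Z^{(\omega)}$ has infinite rank. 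If instead $r_0(G)<\kappa$, then the torsion subgroup $T=t(G)$ has cardinality $\kappa$, and $w$-divisibility descends to $T$: since $t(mG)=mT$ while $mG/mT$ is torsion-free of rank at most $r_0(G)<\kappa$, the equality $|mG|=\kappa$ forces $|mT|=\kappa$, and hence $|p^nT|=\kappa$ for every prime $p$ and every $n$.

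The remaining task is to extract $\kappa$ independent countable \emph{unbounded} subgroups from the $w$-divisible torsion group $T=\bigoplus_pG_p$, and this is where the cardinal bookkeeping becomes delicate, the critical situation being when no single component $G_p$ has full size $\kappa$ and $\kappa$ is a limit of the sizes $|G_p|$ (as in $T=\bigoplus_n\Z(p_n)^{(\aleph_n)}$ with $\kappa=\aleph_\omega$). I would handle it by a matching/selection argument: the identities $|p^nT|=\kappa$ guarantee that no finite set of primes or of bounded layers carries a $\kappa$-sized piece, so one can distribute the $\kappa$ prospective summands over infinite sets $\pi_\alpha$ of primes in such a way that each prime $p$ is used by at most $|G_p|$ of them, producing $\kappa$ independent summands of the form $\bigoplus_{p\in\pi_\alpha}\Z(p)$ (type (iii) of Lemma \ref{basic:subgroup}), or $\bigoplus_n\Z(p^n)$-type summands when a single component is unbounded of full size. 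The feasibility of this distribution for every possible cofinality pattern of $\kappa$ rests exactly on $w$-divisibility through the identities $|p^nT|=\kappa$, and verifying it uniformly is the technical crux. Once the subgroup $H\cong\bigoplus_{\alpha<\kappa}C_\alpha$ has been secured, the density principle and the extension via Lemma \ref{New:claim} described above complete the proof.
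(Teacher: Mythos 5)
Your overall architecture is exactly that of the paper: locate inside $G$ a direct sum $H=\bigoplus_{i\in I}G_i$ with $|I|=|G|$ whose summands are countable groups admitting dense embeddings into $\HMi(\T)$, send the summands into distinct coordinates (density of the direct sum $\HMi(\T)^{(I)}$ in the product $\HMi(\T)^I$ then gives density of the image of $H$), and finally extend this monomorphism to all of $G$ by Lemma \ref{New:claim}, noting that the extended image still contains the dense subgroup $j(H)$. Both stages are correct as you state them and coincide with the paper's proof. The difference lies in how the key algebraic fact is obtained: the paper does not prove it, but invokes \cite[Theorem 3.6]{DS_ConnectedMarkov}, which asserts that $G$ contains a direct sum $\bigoplus_{s\in S}H_s$ of unbounded groups with $|S|=|G|$, where one may assume each $H_s$ is either $\Z$ or a countable unbounded torsion group; a short pigeonhole argument (either the $\Z$-summands or the torsion summands number $|S|$) then produces the required family, with the $\Z$-copies regrouped into blocks isomorphic to $\Z^{(\omega)}$.

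You instead attempt to prove this decomposition from scratch, and that is where your proof is incomplete. Your reductions are sound: if $r_0(G)=\kappa$, an independent family of infinite-order elements generates $\Z^{(\kappa)}$; and if $r_0(G)<\kappa$, your verification that $t(mG)=mT$ and hence $|mT|=\kappa$ for all $m$ is correct, so the torsion part $T$ is $w$-divisible of cardinality $\kappa$. But the remaining claim --- that a $w$-divisible torsion group of uncountable cardinality $\kappa$ contains $\kappa$ independent countable unbounded subgroups --- is precisely the hard content of the cited theorem, and your prime-matching sketch does not establish it. The matching idea only addresses the situation where unboundedness is spread over infinitely many primes; it says nothing when the whole cardinality $\kappa$ is carried by a single reduced unbounded $p$-component $G_p$ with $|p^nG_p|=\kappa$ for all $n$ (for instance, the torsion completion of $\bigoplus_n\Z(p^n)$ is a reduced $p$-group of cardinality $\cont$ whose basic subgroup is countable). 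In that case one must extract $\kappa$ independent unbounded countable subgroups from a single $p$-group of final rank $\kappa$, which requires basic-subgroup and final-rank structure theory; naive constructions, such as taking elements supported along an almost disjoint family of infinite sets of coordinates, do not automatically yield a direct sum (finitely supported elements of distinct pieces can coincide). Since you yourself flag this step as ``the technical crux'' and leave it unverified, the proof has a genuine gap at its central point --- one that can be closed either by proving this extraction lemma for $p$-groups or, as the paper does, by citing \cite[Theorem 3.6]{DS_ConnectedMarkov}.
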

\begin{proof}
Our aim is to find a family $\{G_i:i\in I\}$ of subgroups of $G$ such that $|I|=|G|$, $G$ contains its direct sum $H=\bigoplus_{i\in I} G_i$ 
and each $G_i$ admits a dense embedding into $\HMi(\T)$. Assuming this is done, one obtains a monomorphism
$j: H =\bigoplus_{i\in I} G_i\to \bigoplus_{i\in I}\HMi(\T) = \HMi(\T)^{(I)}$ 
such that $j(H)$ is dense in $\HMi(\T)^{(I)}$. Since $\HMi(\T)^{(I)}$ is dense in $\HMi(\T)^I$,
the subgroup $j(H)$ of $\HMi(\T)^I$ is also dense in $\HMi(\T)^I$.
Using Lemma \ref{New:claim}, this embedding can be extended to a monomorphism $j': G \to \HMi(\T)^{I}$, so that $j'(G)\cong G$ and $j'(G)$ is also dense in $\HMi(\T)^{I}$. Since $|I|=|G|$, the topological groups $\HMi(\T)^{I}$ and $\HMi(\T)^{G}$ are topologically isomorphic. This produces the desired 
dense embedding of $G$ into $\HMi(\T)^G$.

According to 
\cite[Theorem 3.6]{DS_ConnectedMarkov}, 
$G$ contains a direct sum $\bigoplus_{s\in S}H_s$ of unbounded groups with $|S| = |G|$. It is not restrictive to assume that, for each $s\in S$, either $H_s\cong \Z$ or $H_s$ is a countable unbounded torsion group. Let
$S_0 = \{s\in S: H_s \cong \Z\}$ and $S_1 = S \setminus S_0$.
Since $S$ is infinite, there exists $k = 0,1$ such that $|S|=|S_k|$. 

\smallskip
{\em Case 1\/}. $k=0$.
Since $S_0$ is infinite (in fact, even uncountable), we can find a decomposition $S_0=\bigcup_{i\in I} T_i$ of $S$ into countably infinite pairwise disjoint sets $T_i$. For each $i\in I$, we let $G_i=\bigoplus_{t\in T_i} H_t$.
Since $H_t\cong \Z$ for every $t\in T_i$, it follows that 
$G_i\cong \Z^{(\omega)}$ for every $i\in I$. Furthermore,
$G$ contains direct sum 
$\bigoplus_{s\in S_0} H_s=\bigoplus_{i\in I}\bigoplus_{t\in T_i} H_t
=
\bigoplus_{i\in I}G_i$.

\smallskip
{\em Case 2\/}. $k=1$.
In this case we let $I=S_1$ and $G_i=H_i$ for all $i\in I=S_1$.

\smallskip
We claim that $\{G_i: i\in I\}$ is the desired family.
Indeed, $|I|=|G|$. By our construction, $G$ contains the direct sum 
$H=\bigoplus_{i\in I} G_i$.
Finally, each $G_i$ is either a countable unbounded torsion group or the group $\Z^{(\omega)}$ of infinite rank. Applying Theorem \ref{embedding:countable:groups}, we conclude that $G_i$ admits a dense embedding into $\HMi(\T)$.
\end{proof}

\begin{corollary}\label{new:w-divisible:theorem}
Every $w$-divisible group admits a \map \  group topology. 
\end{corollary}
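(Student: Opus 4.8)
The goal is to prove Corollary \ref{new:w-divisible:theorem}: every $w$-divisible group admits a \map\ group topology. The plan is to split into two cases according to cardinality, since the heavy machinery of Lemma \ref{w-divisible:embeddings} is stated only for \emph{uncountable} $w$-divisible groups.

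\textbf{The uncountable case.} Here the work is essentially already done by Lemma \ref{w-divisible:embeddings}. First I would invoke that lemma to obtain a dense algebraic embedding of the uncountable $w$-divisible group $G$ into $\HMi(\T)^G$. The next ingredient is that $\HMi(\T)^G$ is itself \map: the group $\HMi(\T)$ is \map\ by Corollary \ref{HM(T)isMinAP}, and minimal almost periodicity is preserved under arbitrary direct products by Lemma \ref{easy:lemma}. Thus $\HMi(\T)^G$ is \map. Finally, since $G$ densely embeds into a \map\ group, Corollary \ref{easy:corollary} immediately yields that $G$ admits a \map\ group topology. This case is a direct three-step chaining of earlier results and presents no real obstacle.

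\textbf{The countable case.} If $G$ is countable and $w$-divisible, I would first observe that $w$-divisibility forces $|mG|=|G|$ for every $m\ge 1$; in particular, if $G$ is nontrivial and infinite, then $G$ must be unbounded. Indeed, were $G$ bounded (of finite exponent), then for $m$ equal to the exponent we would have $mG=\{0\}$ while $|G|\ge\omega$, contradicting $|mG|=|G|$. (If $G$ is finite, $w$-divisibility forces $|mG|=|G|$ for all $m$, so $mG=G$ for all $m$, which for a finite group means $G=\{0\}$, and the trivial group is vacuously \map.) Hence a countably infinite $w$-divisible group is unbounded, and Lemma \ref{Saak} applies directly to give a \map\ group topology on $G$.

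\textbf{Main obstacle.} There is no serious obstacle; the corollary is a packaging result assembling Lemma \ref{w-divisible:embeddings}, Corollary \ref{HM(T)isMinAP}, Lemma \ref{easy:lemma}, Corollary \ref{easy:corollary}, and Lemma \ref{Saak}. The only point requiring a moment of care is the bookkeeping at the boundary between the countable and uncountable cases: one must confirm that the countable-but-infinite $w$-divisible groups are exactly handled by the unbounded-countable result (Lemma \ref{Saak}), which hinges on the elementary observation above that $w$-divisibility of an infinite group precludes finite exponent. I would therefore state the short cardinality dichotomy first, dispatch the uncountable case via the embedding-plus-preservation argument, and close the countable case by reducing to Lemma \ref{Saak}.
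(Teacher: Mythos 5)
Your proposal is correct and matches the paper's own proof essentially step for step: the paper likewise dispatches the bounded (hence trivial) case via Definition \ref{w-divisible:reformulation}, handles countable groups by Lemma \ref{Saak}, and handles uncountable groups by chaining Lemma \ref{w-divisible:embeddings}, Corollary \ref{HM(T)isMinAP}, Lemma \ref{easy:lemma}, and Corollary \ref{easy:corollary}. The only cosmetic difference is that the paper states the ``$w$-divisible and bounded implies trivial'' observation up front rather than inside the countable case.
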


\begin{proof} Let $G$ be a $w$-divisible group. If $G$ is 
a bounded torsion group, then $G$ must be trivial 
by Definition \ref{w-divisible:reformulation}. Clearly, the trivial group has a \map\ group topology. Therefore, from now on we shall assume that $G$ is not bounded torsion. 

If $G$ is countable, then $G$ admits a \map\ group topology by Lemma \ref{Saak}. 

If $G$ is uncountable,
then $G$ admits a dense embedding into $\HMi(\T)^G$ by Lemma \ref{w-divisible:embeddings}.
Since $\HMi(\T)$ is \map\ by Corollary \ref{HM(T)isMinAP}, its power
$\HMi(\T)^G$ is \map\ by Lemma \ref{easy:lemma}.
Now $G$ admits a \map\ group topology by Corollary \ref{easy:corollary}.
\end{proof}

We need the following lemma that can be obtained from \cite[Lemma 4.5]{DS_ConnectedMarkov} with $\sigma = \omega$: 

\begin{lemma}
\label{homogeneous:split} \cite{DS_ConnectedMarkov}
Every unbounded abelian group $G$ admits a decomposition $G=N\oplus H$ such that $N$ is a bounded group with all its Ulm-Kaplanskly invariants infinite and  $H$ is a $w$-divisible group.
\end{lemma}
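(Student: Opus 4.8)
The plan is to prove Lemma \ref{homogeneous:split} by separating the bounded and unbounded ``directions'' of the group $G$ via its essential order. The key structural tool will be the decomposition theory of bounded abelian groups combined with the notion of $eo(G)$ from Definition \ref{lemma2}. First I would let $n=eo(G)$ and consider the subgroup $G[n]$, which by Theorem \ref{Theorem:JA} is the connected component $c_\Zar(G)$; the complementary behaviour of $nG$ is where the $w$-divisibility will come from. If $G$ is unbounded then $nG$ is either $\{0\}$ (when $n=0$, i.e.\ $G$ unbounded in the essential-order sense) or infinite, and in either case the quotient $G/G[n]\cong nG$ should carry no bounded torsion, which is the first indication that one can extract a $w$-divisible complement.

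The main technical step is to realize the splitting $G=N\oplus H$ at the level of an internal direct sum rather than merely up to extension. Here I would proceed as follows. Write the bounded part governed by $n=eo(G)$: the subgroup $G[n]$ is bounded, and I would want to peel off from it a summand $N$ that is bounded with all Ulm-Kaplansky invariants infinite, while absorbing any finite ``defect'' invariants into the $w$-divisible part $H$. Concretely, for each prime $p$ and each height $i$ I would compare the invariant $\alpha_{p,i}$ against $\omega$: those that are already infinite are collected into $N$, and the finitely many cyclic summands with finite invariants, together with all of the unbounded/torsion-free/divisible directions of $G$, are packaged into $H$. One then checks that $H$, having no bounded direct summand left and absorbing all the ``genuinely infinite-exponent'' directions, satisfies $|mH|=|H|$ for every $m\ge 1$, i.e.\ is $w$-divisible in the sense of Definition \ref{w-divisible:reformulation}.

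The hard part will be verifying the $w$-divisibility of $H$ uniformly in $m$, and ensuring that the splitting is an honest internal direct sum compatible with the $p$-primary decompositions simultaneously across all primes. The subtlety is that $w$-divisibility $|mH|=|H|$ must hold for \emph{every} integer $m\ge1$, so I must confirm that after removing the infinite-invariant bounded summand $N$, the cardinality $|mH|$ does not drop: finite invariants contribute only finitely to any $mH$, the torsion-free and divisible parts contribute the full cardinality $|H|$ when $H$ is uncountable, and the countable case must be handled by the earlier reduction (observing that a countable unbounded $G$ can be split with $N$ trivial or by invoking that the torsion and rank structure forces $|mH|=|H|$). I expect this cardinality bookkeeping, together with the reconciliation of the essential-order subgroup $G[n]$ with the Ulm-Kaplansky classification, to be the principal obstacle; the cited result \cite[Lemma 4.5]{DS_ConnectedMarkov} (with $\sigma=\omega$) is precisely what carries out this bookkeeping, so the cleanest route is to quote that lemma and specialize its parameter, which is exactly how the statement is phrased.
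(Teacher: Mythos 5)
Your closing fallback --- quote \cite[Lemma 4.5]{DS_ConnectedMarkov} with $\sigma=\omega$ --- is in fact all the paper does: the lemma is stated with exactly that citation and no independent proof is given, so on that level your proposal and the paper coincide. The problem is the sketch you present as the actual mathematical content, which fails at its first step. By Definition \ref{lemma2}(ii), $eo(G)=0$ whenever $G$ is unbounded, and the unbounded case is precisely the case of the lemma; hence $n=eo(G)=0$, $nG=\{0\}$, and $G[n]=G[0]=G$. So ``the bounded part governed by $n=eo(G)$'' does not exist: $G[n]$ is the whole (unbounded) group, Theorem \ref{Theorem:JA} only says that $G$ is $\Zar$-connected, and the essential-order machinery is vacuous here. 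It is informative only for bounded groups and cannot serve as the device that separates $N$ from $H$; your first paragraph (``$nG$ is either $\{0\}$ or infinite, and the quotient $G/G[n]\cong nG$ carries no bounded torsion'') is likewise empty, since $nG=\{0\}$ and $G/G[n]$ is trivial.

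Beyond that, the two genuinely hard points are left untouched. First, ``collecting the infinite invariants into $N$'' is not automatically an internal direct-sum decomposition of $G$: to split off a bounded subgroup one needs purity (Kulikov's theorem that a bounded pure subgroup is a direct summand), and for an unbounded primary component --- e.g. $G=\Z(2)^{(\kappa)}\oplus\bigoplus_{n\geq 1}\Z(2^n)$ with $\kappa$ uncountable --- the summand $\Z(2)^{(\kappa)}$ must be extracted via basic-subgroup theory, not by reading off invariants of a cyclic decomposition that $G$ itself does not possess (the paper's equation \eqref{eq:1} applies only to groups of finite exponent). Second, the verification that $|mH|=|H|$ for every $m\geq 1$ is exactly where the work lies, and you explicitly defer it to the cited lemma. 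So, judged as an independent argument, the proposal has a genuine gap at both the splitting step and the $w$-divisibility step; judged as a citation, it is the same one-line appeal to \cite[Lemma 4.5]{DS_ConnectedMarkov} that the paper makes.
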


\medskip 
\noindent {\bf Proof of Theorem \ref{CH:theorem}.}
 Let $G$ be an unbounded abelian group. By Lemma \ref{homogeneous:split}, we find a decomposition $G=N\oplus H$ such that  $N$ is either trivial, or a bounded group with infinite Ulm-Kaplanskly invariants and $H$ is a 
$w$-divisible group. 
Applying Corollary \ref{new:corollary}, we conclude that $N$ admits a minimally almost periodic group topology
$\mathscr{T}_N$. 
By Corollary \ref{new:w-divisible:theorem}, $H$ also admits a minimally almost periodic group topology $\mathscr{T}_H$.
Since both $(N,\mathscr{T}_N)$ and $(H,\mathscr{T}_H)$ are minimally almost periodic, so is their product $(N,\mathscr{T}_N)\times(H,\mathscr{T}_H)$, by Lemma \ref{easy:lemma}. 
\qed

\begin{remark}
A careful analysis of the proofs in this section shows that our use of the Nienhuys group is restricted to the reference to Lemma \ref{Saak} in the proof 
of Lemma \ref{new:w-divisible:theorem}. 
Combining this with Remark \ref{where:Nienhuys:is:used}, we conclude that 
the recourse to Nienhuys group $\Ni$ in the proof of our main results is necessary only for handling countable groups of finite non-zero rank.
\end{remark}

\end{document}